\newtheorem{lemma}{Lemma}
\newtheorem{remark}{Remark}
\newtheorem{proposition}[lemma]{Proposition}
\newtheorem{theorem}[lemma]{Theorem}
\newtheorem{example}{Example}
\newtheorem{corollary}[lemma]{Corollary}
\newcommand{\EE}{{\mathbb{E}}}
\newcommand{\PP}{{\mathbb{P}}}
\newcommand{\bfP}{{\mathbf{P}}}
\newcommand{\Q}{Q}
\newcommand{\D}{\,\rm{d}}
\newcommand{\dE}{\mathbb {E}}
\newcommand{\dP}{\mathbb{P}}
\newcommand{\dN}{\mathbb {N}}
\newcommand{\dR}{\mathbb {R}}
\newcommand{\cF}{\mathcal {F}}
\newcommand{\cT}{\mathcal {T}}
\newcommand{\cI}{\mathcal {I}}
\newcommand{\cN}{\mathcal {N}}
\newcommand{\cO}{\mathcal {O}}
\newcommand{\cE}{\mathcal {E}}
\newcommand{\cG}{\mathcal {G}}
\newcommand{\cL}{\mathcal {L}}
\newcommand{\cP}{\mathcal {P}}
\newcommand{\tx}{{\textsc{tx}}}
\newcommand{\SBRA}[1]{{{\left[#1\right]}}} 
\newcommand{\PAR}[1]{{{\left(#1\right)}}} 
\newcommand{\hh}{
{\rm H}}
\newcommand{\ts}{{t}_{\textsc{ent}}}
\newcommand{\veps}{\varepsilon}
\title[Cutoff at the ``entropic time'']{Cutoff at the ``entropic time'' \\for sparse Markov chains}
\author{Charles Bordenave, Pietro Caputo, Justin Salez}
\begin{document}

\begin{abstract}
We study convergence to equilibrium for a class of Markov chains in random environment. The chains are sparse in the sense that in every row of the transition matrix $P$ the mass is essentially concentrated on few entries. Moreover, the entries are exchangeable within each row.  This includes various models of random walks on sparse random directed graphs.  The models are generally non reversible and the equilibrium distribution is itself unknown.
In this general setting we establish the cutoff phenomenon for the total variation distance to equilibrium, with mixing time given by the logarithm of the number of states times the inverse of the average row entropy of $P$. As an application, we consider the case where the rows of $P$ are i.i.d.\ random vectors in the domain of attraction of a Poisson-Dirichlet law with index $\alpha\in(0,1)$. 
 Our main results are based on a detailed analysis of the weight of the trajectory followed by the walker. This approach offers an interpretation of cutoff  as an instance of the concentration of measure phenomenon.  
\end{abstract}

\maketitle
\thispagestyle{empty}

\section{Introduction}
\subsection{Model} 
Let $P$ be a $n\times n$ stochastic matrix with unique invariant law $\pi$. Given an initial  state $i\in[n]=\{1,\ldots,n\}$ and a precision $\varepsilon\in(0,1)$,  the \emph{mixing time} is
\begin{eqnarray*}
t_{\textsc{mix}}^{(i)}(\varepsilon)  \,:= \, \inf\left\{t\in\dN\colon\, \|P^t(i,\cdot)-\pi\|_{\textsc{tv}}\leq \varepsilon\right\},
\end{eqnarray*}
where $\|\cdot\|_{\textsc{tv}}$ denotes the total variation distance.
Estimating this quantity is often a difficult task. The purpose of this paper is to relate it to the following simple information-theoretical statistics, which we call the \emph{entropic time}:
\begin{eqnarray}\label{tent}
\ts \, := \, \frac{\log n}{\hh} & \textrm{ where }& \hh \, := \, -\frac{1}{n}\sum_{i,j=1}^n P(i,j)\log {P(i,j)}.
\end{eqnarray}
In words, $\hh$ is the average row entropy of the matrix $P$. Our finding is that, in a certain sense,  ``most'' sparse stochastic  matrices have mixing time roughly given by $\ts$, regardless of the choice of the parameters $\varepsilon\in(0,1)$ and $i\in[n]$. 

To give a precise meaning to the previous assertion, we define the following model of {\em Random Stochastic Matrix}. 
For each $i\in[n]$, 
let $p_{i,1}\ge \ldots\ge p_{i,n}\ge 0$ be given ranked numbers such that $\sum_{j=1}^np_{i,j}=1$, and define the $n\times n$ random stochastic matrix $P$ by 
\begin{eqnarray}
\label{def:P}
P(i,j)  \,:=\, p_{i,\sigma^{-1}_i(j)}, \qquad \;(1\leq i,j\leq n),
\end{eqnarray}
where $\sigma=(\sigma_i)_{1\leq i\leq n}$ is a collection of $n$ independent, uniform random permutations of $[n]$, which we refer to as the \emph{environment}. We sometimes write $P_\sigma$ instead of $P$ to emphasize the dependence on the environment. Note that the average row entropy $
\hh  =  -\frac 1n\sum_{i,j=1}^np_{i,j}\log p_{i,j}
$ of this random matrix does not depend on the environment. To study large-size asymptotics, we let the input parameters $(p_{i,j})_{1\leq i,j\leq n}$ implicitly depend on $n$ and consider the limit as $n\to\infty$. Our focus is on the sparse and non-degenerate regime defined below. It might help the reader to think of all these parameters as taking values in $\{0\}\cup[\varepsilon,1-\varepsilon]$ for some fixed $\varepsilon\in(0,1)$, so that the number of non-zero entries in each row is bounded independently of $n$. However, we will only impose the following weaker conditions:
\begin{enumerate}
\item [$1.$] \textbf{Sparsity} (in every row, the mass is essentially concentrated on a few entries): 
\begin{eqnarray}
\label{assume:sparse}
\hh=\cO(1) & \textrm{ and } & \max_{i\in[n]}\,\sum_{j=1}^n p_{i,j}\left(\log {p_{i,j}}\right)^2 = o(\log n).
\end{eqnarray}
\item  [$2.$] \textbf{Non-degeneracy} (in most rows, the mass is not concentrated on a single entry):  
\begin{eqnarray}
\label{assume:non-degeneracy}
\limsup_{n\to\infty}\left\{\frac{1}{n}\sum_{i,j=1}^n{\bf 1}_{\{p_{i,j}>1-\varepsilon\}}\right\} & \xrightarrow[\varepsilon\to 0^+]{} &  0.
\end{eqnarray}
\end{enumerate}
Note that these conditions imply in particular that $\ts=\Theta(\log n)$ as $n\to\infty$.  

A remark on the asymptotic notation used above and throughout the article: for deterministic sequences of positive numbers $(a_n)$ and $(b_n)$ (with the dependency upon $n$ being often implicit), we write $a_n=o(b_n)$ (resp. $a_n=\cO(b_n)$, $a_n=\Omega(b_n)$, or $a_n= \Theta(b_n)$) to mean that the ratio $a_n/b_n$ vanishes (resp. remains bounded away 
from infinity,  from zero, or from both) as $n\to\infty$. We shall also say that an event that depends on $n$ holds \emph{with high probability} if the probability of this event converges to $1$ as $n\to\infty$; finally, we use  $\xrightarrow[]{\bfP}$ to indicate convergence \emph{in probability}. 

\subsection{Results} Our main result states that around the entropic time $\ts$, the distance to equilibrium undergoes the following sharp transition, henceforth referred to as a \emph{uniform cutoff} (to emphasize the insensitivity to the initial state). 
\begin{theorem}[Uniform cutoff at the entropic time] \label{th:main}
Under the above assumptions, the Markov chain defined by $P$ has, with high probability, a unique stationary distribution $\pi$. Moreover, for any fixed $\varepsilon\in(0,1)$, we have 
\begin{eqnarray*}
\max_{i\in[n]}\,\left|\frac{t_{\textsc{mix}}^{(i)}(\varepsilon)}{\ts}-1\right| & \xrightarrow[n\to\infty]{\bfP} & 0.
\end{eqnarray*}
In other words,  for $t=\lambda \ts+o(\ts)$ with  $\lambda$ fixed as $n\to\infty$, we have the following transition:
\begin{eqnarray}
\lambda<1 \qquad \Longrightarrow \qquad \min_{i\in[n]} \|P^t(i,\cdot)-\pi\|_{\textsc{tv}} & \xrightarrow[n\to\infty]{\bfP} & 1
\label{cutoff1}\\
\lambda>1 \qquad \Longrightarrow \qquad \max_{i\in[n]} \|P^t(i,\cdot)-\pi\|_{\textsc{tv}} & \xrightarrow[n\to\infty]{\bfP} & 0.
\label{cutoff2}\end{eqnarray}
\end{theorem}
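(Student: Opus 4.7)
The plan is to derive both estimates \eqref{cutoff1}--\eqref{cutoff2} from a single sharp concentration result for the \emph{log-weight} of the walker's trajectory. For a trajectory $(X_0,\ldots,X_t)$ set
\[
W_t\,:=\,-\sum_{s=0}^{t-1}\log P_\sigma(X_s,X_{s+1}),
\]
so that the trivial bound $P^t(i,X_t)\geq e^{-W_t}$ holds by summing over paths. The main technical claim I would prove is a Shannon--McMillan--Breiman type estimate tailored to the random environment: uniformly over the starting state $i\in[n]$,
\[
\frac{W_t}{t\,\hh}\;\xrightarrow[n\to\infty]{\bfP}\;1,\qquad t=\Theta(\log n).
\]
From this one reads off directly that, at time $t=\lambda\ts$, the law $P^t(i,\cdot)$ is concentrated with probability $1-o(1)$ on a set of size $\sim n^{\lambda}$ with each atom carrying mass $\sim n^{-\lambda}$.

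To prove the weight concentration I would expose the environment one row at a time along the trajectory. Thanks to sparsity and the uniformity of the permutations, for $t=O(\log n)$ the trajectory avoids self-intersections with high probability: the probability that the next step revisits a previously seen vertex is $O(1/n)$, so a union bound gives $O(t^2/n)=o(1)$. Conditional on non-intersection, each new step reveals a fresh position in an unexplored row, and the conditional mean of $-\log P(X_s,X_{s+1})$ given the past is the deterministic row entropy $H_{X_s}=-\sum_j p_{X_s,j}\log p_{X_s,j}$. A Chebyshev/martingale variance bound applied to $W_t-\sum_{s=0}^{t-1}H_{X_s}$, with conditional second moments controlled precisely by the second part of \eqref{assume:sparse}, gives concentration of $W_t$ around $\sum_s H_{X_s}$. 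A separate exchangeability argument then replaces $\tfrac{1}{t}\sum_s H_{X_s}$ by $\hh$ up to an $o(1)$ error.

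The lower bound \eqref{cutoff1} then follows by information-theoretic considerations. Fix $\lambda<1$ and $\eta>0$ small, and set $B_t(i):=\{j:P^t(i,j)\geq n^{-\lambda-\eta}\}$. The weight concentration gives $P^t(i,B_t(i))\to 1$, while $|B_t(i)|\leq n^{\lambda+\eta}=o(n)$. It remains to show $\pi(B_t(i))\to 0$ in probability. Here exchangeability of the environment yields $\mathbb{E}[\pi(j)]=1/n$ for each $j$, and applying the weight concentration to a stationary trajectory started at $X'_0\sim\pi$ forces $\pi$ to be typically supported on $\Omega(n^{1-o(1)})$ vertices at level $\sim 1/n$, ruling out non-vanishing mass on any deterministic sublinear set.

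The real difficulty is the upper bound \eqref{cutoff2}. For $\lambda>1$, the weight concentration only says that each state receives $P^t(i,j)\leq n^{-\lambda+o(1)}$ with high probability; to deduce $\|P^t(i,\cdot)-\pi\|_{\textsc{tv}}\to 0$ one must show the mass is roughly equidistributed at level $1/n$ on every state. The heuristic is clean: the $\sim n^{\lambda}$ typical trajectories of weight $\sim n^{-\lambda}$ must, by pigeonhole, hit each target vertex $\sim n^{\lambda-1}$ times, giving $P^t(i,j)\sim 1/n\approx\pi(j)$. To make this rigorous I would couple two walkers started at $i$ and at an independent $i'\sim\pi$ under the same environment, and use a path-swapping argument exploiting row-exchangeability to match their time-$t$ distributions up to $o(1)$ total variation. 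Proving that this coupling succeeds with probability $1-o(1)$ requires a delicate analysis of the joint law of two exploration trees in the random permutation environment; I expect this step, and not the weight concentration itself, to be the technical heart of the proof.
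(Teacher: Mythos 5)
Your overall strategy—reduce the theorem to a law of large numbers for the log-weight of the trajectory, then use that to bound the distance to equilibrium from below and from above—is exactly the route the paper takes (its Theorem~\ref{pr:weight} is your ``Shannon--McMillan--Breiman'' estimate). But there are two substantive gaps, one in the weight concentration and one in the reduction to $\pi$, and the upper bound is not actually argued.

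\emph{Uniformity over the starting state.} Your martingale/Chebyshev argument for the concentration of $W_t$ yields an \emph{annealed} bound: $\PP((X,W)\in A)=o(1)$ where $A$ is the bad set of trajectory weights. But to conclude cutoff you need $\max_{i\in[n]}Q_i(A)\xrightarrow{\bfP}0$, a statement about \emph{every} starting state simultaneously, for a typical environment. The only generic bridge, Markov's inequality $\PP(\max_i Q_i(A)>\delta)\leq (n/\delta)\,\PP((X,W)\in A)$, costs a factor $n$, so you need $\PP((X,W)\in A)=o(1/n)$, not merely $o(1)$. Chebyshev with variance $o(\log n)$ cannot deliver this. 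The paper explicitly flags this difficulty and overcomes it with a higher-moment argument (display~\eqref{higher-moment}): it runs $k=\Theta(\log n)$ independent $P_\sigma$-chains from the same start in the same environment and shows the $k$-fold intersection has probability $o(\delta^k/n)$, exploiting that after the $\ell$-th walk exits the graph explored by the first $\ell-1$, fresh uniform randomness in the permutations allows a coupling with i.i.d.\ weight samples. This is a genuine technical ingredient missing from your sketch, not a routine refinement.

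\emph{Working with $\pi$ directly.} Both your lower- and upper-bound arguments treat $\pi$ as a usable object (computing $\EE[\pi(j)]$ by exchangeability, coupling a walker started from $\pi$). But $\pi$ is a highly implicit, environment-dependent random measure, and the set $B_t(i)$ in your lower-bound argument is itself a random set correlated with the same environment, so $\EE[\pi(j)]=1/n$ says nothing about $\pi(B_t(i))$. Likewise, ``couple two walkers started at $i$ and at $i'\sim\pi$'' presupposes access to $\pi$. The paper finesses this with a crucial observation: one may replace $\pi$ by the explicit proxy $\widehat\pi(j)=\frac1n\sum_i P^{\lfloor\ts/10\rfloor}(i,j)$, prove the two halves of cutoff for $\widehat\pi$, and then \emph{deduce} $\|\pi-\widehat\pi\|_{\textsc{tv}}\to 0$ a posteriori by stationarity and convexity. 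This is what makes the lower bound tractable (the second moment $\sum_j\widehat\pi(j)^2$ is bounded by an explicit birthday-collision estimate) and what makes the upper bound go through (the last $h$ steps of a nice path are exposed to fresh permutation randomness, which conditionally compares $P_0^t(i,j)$ to $\widehat\pi(j)$). It also is what delivers the uniqueness of $\pi$, which your proposal does not address.

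\emph{The upper bound.} You correctly identify this as the hard step but leave it at a heuristic (``path-swapping exploiting row-exchangeability''). The paper's actual argument is the construction of a forward exploration tree $\cT_x(s)$ of depth $s=t-h$, a precise notion of ``nice paths'' passing through a bottleneck edge of non-negligible weight and then through a fresh tree of depth $h$, a martingale (Freedman) estimate controlling backtracking into $\cT_x(s)$, and a Bernstein concentration bound showing $P_0^t(x,y)\leq(1+\delta)\widehat\pi(y)+\delta/n$ uniformly. None of this is anticipated in your outline, and I do not think a coupling to a $\pi$-started walk can be made rigorous without confronting the same issue of $\pi$'s intractability noted above.
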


Let us first illustrate our result with a special case. 

\begin{example}[Random walk on random digraphs]\label{ex:dout} When $p_{i,1}=\ldots=p_{i,d_i}=\frac{1}{d_i}$ and $p_{i,d_i+1}=\cdots=p_{i,n}=0$ for some integers $d_1,\ldots,d_n\ge 1$,  the random matrix $P$ may be interpreted as the transition matrix of the random walk on a uniform random directed graph with $n$ vertices and out-degrees $d_1,\ldots,d_n$ (loops are allowed). The average row entropy is then simply the average logarithmic out-degree $\hh=\frac 1n\sum_{i=1}^n\log d_i$. Assumption (\ref{assume:sparse}) requires that this average remain bounded as $n\to\infty$, and also that the maximum out-degree $\Delta$ satisfies $\Delta =e^{o(\sqrt{\log n})}$. Assumption (\ref{assume:non-degeneracy}) simply asks for the proportion of
out-degree-one vertices to vanish. Notice that, because of the possibility of vertices with zero in-degree,  the random matrix $P$ may, with uniformly positive probability, fail to be irreducible.  However, under the above conditions,  Theorem \ref{th:main} ensures that with high probability there is a unique stationary distribution and the  walk exhibits uniform cutoff at time $(\log n)/\hh$. To the best of our knowledge, the occurence of a cutoff phenomenon is new even in the  special case where $d_1=\cdots=d_n=r$ for some fixed integer $r\ge 2$, known as the \emph{random $r-$out digraph}. We emphasize that the results of \cite{bordenave2015random} do not apply here, since with high probability  the minimum in-degree is zero and the maximum in-degree diverges (logarithmically)  
with $n$. We also note that the structure of the stationary measure $\pi$ on random $r-$out digraphs has been investigated in details by Addario-Berry, Balle and Perarnau  \cite{2015arXivRout}. 
\end{example}

Interesting illustrations of Theorem \ref{th:main} can be obtained by taking the input parameters $(p_{i,j})$ also random. In fact, any random stochastic matrix whose law is invariant under permutation of entries within each row is a mixture of random matrices of the form (\ref{def:P}) and is therefore eligible for an application of Theorem \ref{th:main} (conditionnally on the $(p_{i,j})$), provided the assumptions \eqref{assume:sparse}-\eqref{assume:non-degeneracy} are satisfied with high probability. The following theorem illustrates this with the case where the rows $\{(p_{i,1},\dots,p_{i,n}), \,i=1,\dots, n\}$ are i.i.d.\ random vectors in the domain of attraction of a Poisson-Dirichlet law. The spectral properties of this natural random stochastic matrix were investigated in \cite{2016arXiv161001836B}, see Section \ref{sec:related} below for more details.
\begin{theorem}[Random walk in a heavy-tailed environment]
\label{th:heavytails}
Let $\omega=\left(\omega_{ij}\right)_{1\leq i,j< \infty}$ be i.i.d.\ positive random variables whose tail distribution function $G(t)=\PP(\omega_{ij}>t)$ is regularly varying at infinity with index $\alpha\in (0,1)$, i.e., for each $\lambda>0$,
\begin{eqnarray}
\label{regvar}
\frac{G(\lambda t)}{G(t)} & \xrightarrow[t\to\infty]{} & \lambda^{-\alpha}.
\end{eqnarray} 
Then as $n\to\infty$, the $n-$state Markov chain with transition matrix 
\begin{eqnarray}\label{pmat}
{P}(i,j)  :=  \frac{\omega_{ij}}{\omega_{i1}+\cdots+\omega_{in}},\;\qquad (1\leq i,j\leq n)
\end{eqnarray} has with high probability a unique stationary distribution $\pi$,  and exhibits uniform cutoff at time $\frac{\log n}{h(\alpha)}$ in the sense of \eqref{cutoff1}-\eqref{cutoff2}, where $h(\alpha)$ is defined in terms of the {digamma} function $\psi=\frac{\Gamma'}{\Gamma}$  by
\begin{eqnarray}\label{ha}
h(\alpha)  :=  \psi(1)-\psi(1-\alpha) \ = \ \int_0^{\infty}\frac{e^{\alpha t}-1}{e^t-1}\, \D t.
\end{eqnarray}
\end{theorem}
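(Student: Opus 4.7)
The plan is to deduce Theorem \ref{th:heavytails} from Theorem \ref{th:main} by conditioning on the weights $\omega=(\omega_{ij})$. For each row $i\in[n]$, let $p_{i,1}\ge\cdots\ge p_{i,n}$ denote the decreasing rearrangement of $(\omega_{ij}/S_i)_{j=1}^n$, where $S_i=\sum_{j=1}^n\omega_{ij}$. Since the $\omega_{ij}$ are i.i.d., the $n$ rows of the matrix $P$ in \eqref{pmat} are, conditionally on the ordered statistics $(p_{i,j})_{j\le n}$, independent uniform permutations of $(p_{i,1},\dots,p_{i,n})$. In other words, conditionally on the weights $P$ has exactly the law \eqref{def:P}. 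Theorem \ref{th:main} therefore applies conditionally, provided one checks that the sparsity \eqref{assume:sparse} and non-degeneracy \eqref{assume:non-degeneracy} assumptions hold in probability, and that the (random) entropic time satisfies $\ts\sim(\log n)/h(\alpha)$.

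The probabilistic input is classical: under the regular variation assumption \eqref{regvar} with index $\alpha\in(0,1)$, the normalized ranked weights $(p_{i,1},p_{i,2},\dots)$ of a single row converge in distribution to a Poisson--Dirichlet law $\mathrm{PD}(\alpha,0)$, obtained as the normalized ordered jumps of a stable subordinator of index $\alpha$. Using the Poisson point process representation together with elementary moment estimates, I would first show $\EE[-\sum_j p_{1,j}\log p_{1,j}]\to h(\alpha)$, identifying the limit with the expected entropy of $\mathrm{PD}(\alpha,0)$; the formula $h(\alpha)=\psi(1)-\psi(1-\alpha)$ follows from the stick-breaking (or Pitman--Yor) representation of $\mathrm{PD}(\alpha,0)$ and the integral identity for the digamma function. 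Summing over the $n$ i.i.d.\ rows and applying a law of large numbers yields $\hh\xrightarrow{\bfP} h(\alpha)$, so that $\ts=(\log n)/\hh\sim (\log n)/h(\alpha)$. Non-degeneracy \eqref{assume:non-degeneracy} is then immediate from another law-of-large-numbers argument, since the probability that the largest atom $p_{i,1}$ of a $\mathrm{PD}(\alpha,0)$ distribution exceeds $1-\varepsilon$ is a deterministic constant tending to $0$ as $\varepsilon\to 0^+$.

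The main technical obstacle is the second half of the sparsity condition, namely the uniform bound
\[
\max_{i\in[n]}\,\sum_{j=1}^n p_{i,j}(\log p_{i,j})^2 \,=\, o(\log n)
\]
with high probability. For each fixed $i$ the sum is $O(1)$ in expectation under the $\mathrm{PD}(\alpha,0)$ limit (since $\EE[(\log V_1)^2]<\infty$ for a typical atom $V_1$), but controlling the maximum over $n$ i.i.d.\ rows requires a quantitative tail estimate. The strategy is to exploit the Poisson point process representation of the stable subordinator: writing the ranked atoms as $p_{i,j}=\xi_{i,j}/\Xi_i$ with $\xi_{i,j}$ the ordered points of a Poisson process of intensity $\alpha x^{-\alpha-1}dx$ on a suitable interval (up to a slowly varying correction coming from \eqref{regvar}), one can compute the Laplace transform of $\sum_j p_{i,j}(\log p_{i,j})^2$ via Campbell's formula and derive a stretched-exponential tail estimate. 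A union bound over $i\in[n]$ then gives a deviation of order $(\log n)^{O(1)}=o(\log n)$, as required. Once this is established, Theorem \ref{th:main} applied conditionally on the weights yields uniform cutoff at $(\log n)/h(\alpha)$, completing the proof.
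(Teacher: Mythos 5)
Your overall plan — condition on the weights, observe that conditionally $P_n$ has exactly the form \eqref{def:P} with random inputs, apply Theorem~\ref{th:main}, and verify the three hypotheses — is precisely the route taken in the paper. The identification of $h(\alpha)$ through the Poisson--Dirichlet / size-biased pick picture is also the same idea (the paper proves a self-contained version of this as Lemma~\ref{lm:beta}: the size-biased pick $\xi_n$ from a row converges in distribution to $\mathrm{Beta}(1-\alpha,\alpha)$, and $\EE[\log\mathrm{Beta}(1-\alpha,\alpha)]=\psi(\alpha)-\psi(1)=-h(\alpha)$).

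However, there is a genuine gap in your treatment of the sparsity condition. You propose to derive a (stretched-)exponential tail bound on the per-row quantity $Y_i:=\sum_j p_{i,j}(\log p_{i,j})^2$ and then take a union bound over $i\in[n]$, claiming this gives a deviation of order ``$(\log n)^{O(1)}=o(\log n)$''. This is wrong on its face: a union bound over $n$ variables with exponential tails yields $\max_i Y_i=\Theta(\log n)$, and with genuinely \emph{stretched}-exponential tails (exponent $\gamma<1$) you would only get $\max_i Y_i=\mathcal{O}((\log n)^{1/\gamma})$, which is $\gg\log n$. In neither case do you obtain the required $o(\log n)$. The direct approach you propose therefore cannot succeed, regardless of how sharp the Campbell-formula computation is.

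The missing ingredient is the two-step argument the paper uses. One does \emph{not} bound $Y_i$ directly. Instead, for a fixed $\beta\in(\alpha,1)$, Lemma~\ref{lm:expUI} establishes a uniform-in-$n$ exponential moment bound
\[
\sup_{n}\,\EE\Bigl[\exp\Bigl\{\lambda\sum_{j=1}^n\bigl(P_n(1,j)\bigr)^\beta\Bigr\}\Bigr]<\infty
\]
for some $\lambda>0$, from which a union bound gives $\max_i\sum_j p_{i,j}^\beta=\mathcal{O}(\log n)$ almost surely. Then one uses the elementary splitting \eqref{UI}: for any $\varepsilon\in(0,1)$,
\[
Y_i \ \le\ (\log\varepsilon)^2 \;+\; \Bigl(\sup_{p\in[0,\varepsilon]} p^{1-\beta}(\log p)^2\Bigr)\sum_{j}p_{i,j}^\beta,
\]
and the prefactor $\sup_{p\le\varepsilon}p^{1-\beta}(\log p)^2$ can be made arbitrarily small by taking $\varepsilon$ small (since $1-\beta>0$). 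Combining, $\max_i Y_i\le (\log\varepsilon)^2+o(1)\cdot\log n$, and letting $\varepsilon\downarrow0$ at the end yields $o(\log n)$. It is this ``take the max only of $\sum_j p_{i,j}^\beta$, and then shrink the coefficient'' trick that rescues the argument; a direct tail estimate on $Y_i$ itself does not.

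Two minor further remarks. First, the paper's Lemma~\ref{lm:expUI} is proved by an elementary direct argument (estimating $\EE[\prod_{j\in J}(P_n(1,j))^\beta]$ using the disjointness of the events $\{\min_{j\in J}\omega_{1j}>\max_{j\notin J}\omega_{1j}\}$), avoiding the Poisson point process machinery and, more importantly, avoiding the need to quantify the ``slowly varying correction'' you gloss over when approximating finite-$n$ rows by the $\mathrm{PD}(\alpha,0)$ limit. Second, the paper upgrades your in-probability statements to almost-sure ones via Azuma--Hoeffding across the $n$ independent rows; this is not essential for the conclusion but streamlines the verification of \eqref{toshow1} and \eqref{toshow3}.
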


Let us now briefly sketch  the main ideas behind the proof of our results.  
\subsection{Proof outline}
\label{sub:proxy}
The essence of the sharp transition described in Theorem \ref{th:main} lies in a quenched \emph{concentration of measure} phenomenon in the trajectory space that  can be roughly described as follows; we refer to Section \ref{sec:concentration} for more details. Let $i=X_0,X_1,X_2,\dots$ denote the trajectory of the random walk with transition matrix $P$ and starting point $i\in[n]$ and let $Q_i$ denote the associated quenched law, that is the law of the trajectory for a fixed realization of the environment $\sigma$. Define the trajectory weight
 \begin{eqnarray*}
\rho(t):= P(X_0,X_1)\cdots P(X_{t-1},X_t).
\end{eqnarray*} 
In  other words, $\rho(t)$ is the probability of the followed trajectory   up to time $t$. 
Theorem \ref{pr:weight} below establishes that for $t=\Theta(\log n)$, with high probability with respect to the environment, it is very likely, uniformly in the starting point $i$, that $ \log \rho(t)\sim -\hh t $. More precisely, we prove that for any $\varepsilon>0$, 
 \begin{eqnarray}\label{concentra}
\max_{i\in[n]}\,Q_i\left(\rho(t)\notin\left[e^{-(1+\varepsilon)\hh t},e^{-(1-\varepsilon)\hh t}\right]\right) \;\xrightarrow[n\to\infty]{\bfP} \; 0.
\end{eqnarray} 
 In particular, at $t=\ts$ one has $\log \rho(t)\sim -\log n$. As we will see in Section \ref{sec:lower}, the lower bound \eqref{cutoff1} is a rather direct consequence of the concentration result \eqref{concentra}. Indeed, we will check that if the invariant probability measure has its atoms $\pi(j), j \in [n],$ roughly of order $\cO(1/n)$ then we cannot have reached equilibrium by time $t$ if with high probability $\rho(t)\gg 1/n$. The proof of the upper bound \eqref{cutoff2} requires a more detailed investigation of the 
structure of the set of  trajectories that the random walker is likely to follow. As explained in Section \ref{sec:upper}, 
this allows us to obtain a sharp comparison between  the transition probability $P^t(i,j)$ and a certain approximation of $\pi(j)$. Note that the true stationary distribution $\pi$ appearing in Theorem \ref{th:main} is a non-trivial 
random object, with no explicit expression. To overcome this difficulty, we will actually prove \eqref{cutoff1}-\eqref{cutoff2} with $\pi$ replaced by the more tractable approximation 
\begin{eqnarray}
\label{proxy}
\widehat{\pi}(j) \, := \, \frac{1}{n}\sum_{i\in[n]}P^{\lfloor\frac{\ts}{10}\rfloor}(i,j).
\end{eqnarray}
The choice $\frac{\ts}{10}$ is not particularly important:  any probability distribution $\widehat{\pi}$ for which we manage to prove  \eqref{cutoff1}-\eqref{cutoff2} suffices to guarantee the original claim (see forthcoming Remark \ref{rq:constant}). Indeed, using the stationarity of $\pi$ and the convexity of $\|\cdot\|_{\textsc{tv}}$, for all $t\in\dN$ we may write 
\begin{eqnarray*}
\|\pi-\widehat{\pi}\|_{\textsc{tv}} & = & \|\pi P^t-\widehat{\pi}\|_{\textsc{tv}}\\
& \le & \sum_{i\in[n]}\pi(i)\|P^t(i,\cdot)-\widehat{\pi}\|_{\textsc{tv}}\\
& \le & \max_{i\in[n]}\|P^t(i,\cdot)-\widehat{\pi}\|_{\textsc{tv}}.
\end{eqnarray*}
Consequently, if \eqref{cutoff1}-\eqref{cutoff2} hold for $\widehat{\pi}$, then we automatically obtain \begin{eqnarray}
\label{unique}
\|\pi-\widehat{\pi}\|_{\textsc{tv}} &  \xrightarrow[n\to\infty]{\bfP} & 0,
\end{eqnarray}
and we may therefore safely replace $\widehat{\pi}$ by $\pi$ in \eqref{cutoff1}-\eqref{cutoff2} to recover the original claim. Also, the fact that \eqref{unique}  applies simultaneously to all stationary distributions forces the latter to be unique with high probability. Indeed, it is classical that if a Markov chain admits at least two stationary probability distributions, then one can always choose them to be supported on distinct communication classes, so that their total-variation distance is $1$.

 \subsection{Related work}
 \label{sec:related}
 \begin{figure}[h!]
\begin{center}
\includegraphics[angle =0,height = 10cm]{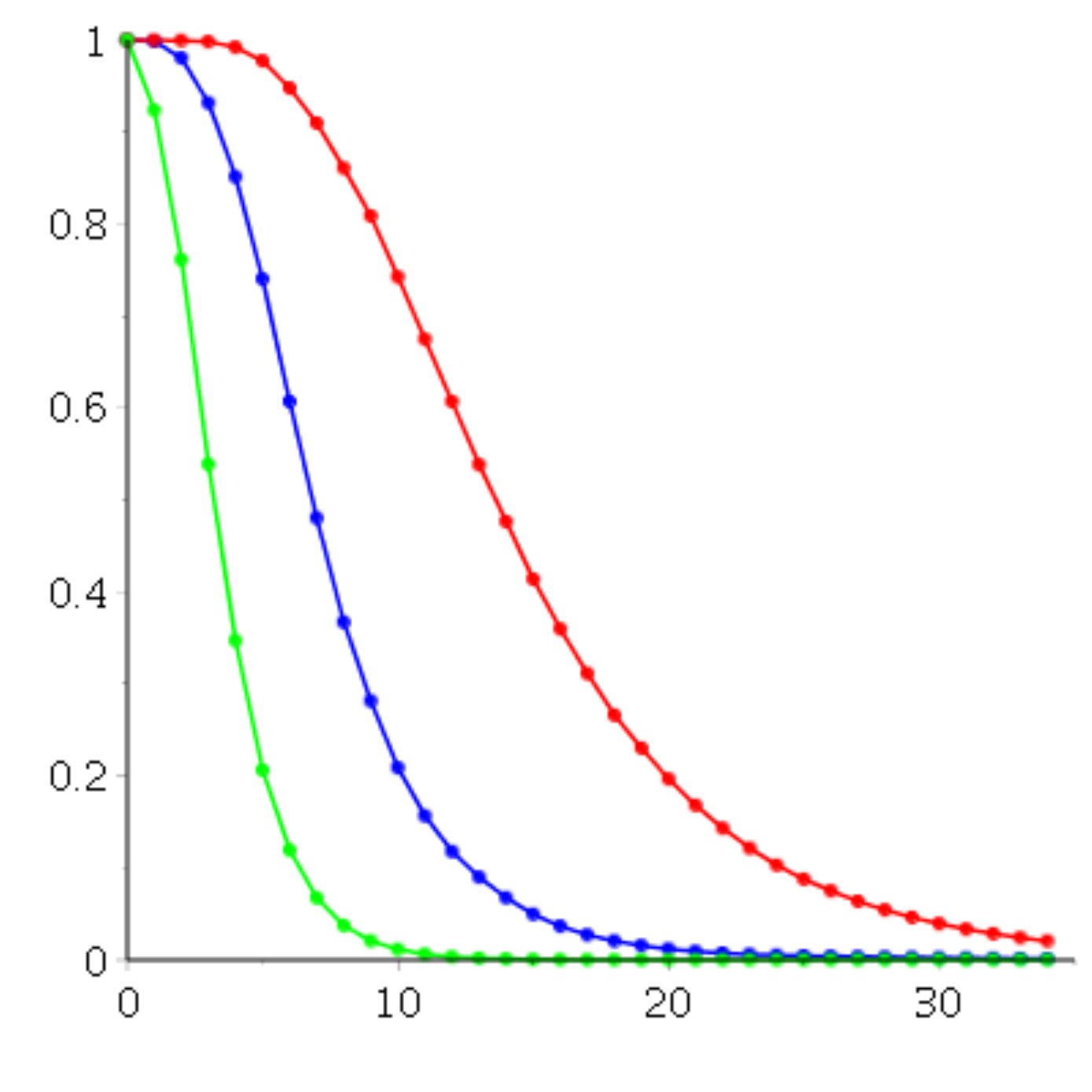}
\caption{Distance to equilibrium along time for the $n\times n$ random matrix in Theorem \ref{th:heavytails}, with Pareto($\alpha$) entry distribution, i.e. $\PP(w_{ij}>t)=(t\vee 1)^{-\alpha}$. Here, $n=10^4$ and $\alpha=0.3$ (red), $\alpha=0.5$ (blue) and $\alpha=0.7$ (green). Note that the function $h$ increases continuously from 
$h(0)=0$ to $h(1)=\infty$: the more ``spread-out'' the transition probabilities are, the faster the chain mixes. }
\label{fig:distance}
\end{center}
\end{figure}
Theorem \ref{th:main} describes a sharp transition in the approach to equilibrium, visible on Figure \ref{fig:distance}: the total variation distance drops from the maximal value $1$ to the minimal value $0$ on a time scale that is asymptotically negligible with respect to the mixing time. This is an instance of the so-called cutoff phenomenon, a remarkable property shared by several models of finite Markov chains. Since its original discovery by Diaconis, Shashahani, and  Aldous in the context of card shuffling around 30 years ago \cite{diaconis1981generating,aldous1983mixing,aldous1986shuffling}, the problem of characterizing the Markov chains exhibiting cutoff has attracted much attention. We refer to \cite{diaconis1996cutoff,aldous2002reversible,levin2009markov} for an introduction. 
While the phenomenon is now rather well understood in various specific settings, see e.g.\ \cite{DS,ding2010total} for the case of birth and death chains, a general characterization is still unknown and its nature remains somewhat elusive (but see \cite{2014arXiv1409.3250B} for an interesting interpretation in the reversible case). 

Recently, part of the attention has shifted from ``specific'' to ``generic'' instances: instead of being fixed, the sequence of transition matrices itself is drawn at random from a certain distribution, and the cutoff phenomenon is shown to occur for almost every realization. Examples include certain random birth and death chains \cite{MR3068032,RSA20693}, ``random random walks'' on some finite groups \cite{MR2121795,MR1465637}, or the simple/non-backtracking random walk on various models of sparse random graphs, including random regular graphs \cite{lubetzky2010cutoff}, graphs with given degrees \cite{2015arXiv,2015arXivNBRW}, and the giant component of the Erd\"os-Renyi random graph \cite{2015arXiv}. The above mentioned references are all concerned with the reversible case of undirected graphs, where the associated simple random walk and non-backtracking random walk have explicitly known stationary distributions. In our recent work \cite{bordenave2015random}, we investigated the non-reversible case of random walk on sparse directed graphs with given bounded degree sequences. Despite the lack of direct information on the stationary distribution, we obtained a detailed description of the cutoff behavior in such cases. 

The present paper considerably extends the results in \cite{bordenave2015random} by establishing cutoff for a large class of non-reversible sparse stochastic matrices, not necessarily arising as the transition matrix of the random walk on a (directed) graph. The time-irreversibility actually plays a crucial role in our proofs: despite the lack of an explicit underlying structure, the Markov chains that we consider turn out to  exhibit a spontaneous ``non-backtracking" tendency which allows us to establish a certain i.i.d. approximation for the environment seen by a typical walker. While the overall strategy of proof of our main result is closely related to the one we introduced in \cite{bordenave2015random}, the level of generality allowed for in the transition probabilities requires an entirely new analysis of path weights. 
For instance, one of the features making the control of trajectory weights much more challenging here is the lack of nontrivial upper bounds on the probability of any particular transition (as opposed to the model studied in \cite{bordenave2015random} where the minimal out-degree was assumed to be at least $2$). 

Our entropic time $\ts$ admits a natural interpretation as follows. One could easily deduce from our proofs that, with high probability, the entropy of the distribution $P^{t} (i, \cdot)$ on the time-interval $[0,\ts]$ grows roughly linearly, at rate $\hh$. This in turns implies that the entropy of ${\pi}$ is $(1-o(1) ) \log n$ with high probability. Consequently, we see that the cutoff occurs precisely when the entropy of the chain reaches the entropy of the invariant distribution, and that the mixing time is given by the entropy at stationarity divided by the average single step entropy $\hh$. Interestingly, the same interpretation can be given to the main results in the models studied in \cite{lubetzky2010cutoff,2015arXiv,2015arXivNBRW,bordenave2015random}. It is thus perhaps tempting to believe that this scenario should apply to a much larger class of Markov chains in random environments, although we do not have a  precise conjecture to propose at the present time.

As already mentioned, the stationary measure $\pi$ appearing in our general theorem is a delicate random object with no explicit expression. In the special case of Example \ref{ex:dout} where all out-degrees are equal however, the structure of $\pi$ has been investigated in details by Addario-Berry, Balle and Perarnau  \cite{2015arXivRout}. 
Concerning the heavy tailed model of Theorem \ref{th:heavytails}, we point out that the eigenvalues and singular values of the random stochastic matrix \eqref{pmat} were analyzed recently in 
 \cite{2016arXiv161001836B}: under a slightly stronger assumption than (\ref{regvar}), the associated empirical distributions are shown to converge to some deterministic limit, depending only on $\alpha$, and characterized by a certain recursive distributional equation. The numerical simulations given therein seem to indicate that the spectral gap should also converge to a non-zero limit, and the authors formulate an explicit conjecture 
 (see \cite[Remark 1.3]{2016arXiv161001836B}).  However, the results in  \cite{2016arXiv161001836B} do not allow one to infer something quantitative about the distance to equilibrium. Indeed, the relation between spectrum and mixing for non-reversible chains is rather loose, and one would certainly need more precise information on the structure of the eigenvectors -- as done in, e.g., \cite{2015arXiv150704725L}. The proof of Theorem \ref{th:heavytails} relies entirely on the 
the general result of Theorem \ref{th:main} and makes no use of spectral theory. As detailed in Lemma \ref{lm:beta} below,  the expression \eqref{ha} for $h(\alpha)$ coincides with the expected value of $-\log\xi$ where $\xi$ has law Beta$(1-\alpha,\alpha)$. That should be expected in light of the fact that a size-biased pick from the Poisson Dirichlet law is Beta-distributed \cite{pitman1997two}.     

\section{Quenched law of large numbers for path weights}
\label{sec:concentration}
The main result of this section can be interpreted as a quenched law of large numbers for the logarithm of the total weight of the path followed by the random walk; see   Theorem \ref{pr:weight} below.   

\subsection{Uniform unlikeliness}

Consider a collection $\sigma=(\sigma_i)_{1\leq i\leq n}$ of $n$ independent random permutations, referred to as  the environment, 
and a $[n]-$valued process $X=(X_t)_{t\ge 0}$ whose conditional law, given the environment, is that of a Markov chain with transition matrix (\ref{def:P}) and initial law uniform on $[n]$.  Our main object of interest will be the sequence of \emph{weights} $W=(W_t)_{t\ge 1}$  seen along the trajectory, and the associated total weight up to time $t$:
\begin{align}
\label{weight}
W_t  := & P(X_{t-1},X_{t})\,,\quad \quad\rho(t) : =\prod_{s=1}^t W_s.
\end{align}
Write $Q$ for the conditional law of the pair $(X,W)$ given the environment. Note that it is a random probability measure on the \emph{trajectory space} $\cE=[n]^{\{0,1,\dots\}}\times[0,1]^{\{1,2,\dots\}}$ equipped with the natural product $\sigma$-algebra of events. A generic point of $\cE$ will be denoted $(x,w)$, where $x=(x_0,x_1,x_2,\ldots)$ and $w=(w_1,w_2,\ldots)$. For example, the trajectorial event ``a transition with weight less than $n^{-\gamma}$ occurs within the first $t$ steps'' will be denoted 
\begin{align}\label{eventa}
& A_0  =  \left\{(x,w)\in \cE:\, \min(w_1,\ldots, w_t) < n^{-\gamma} \right\}, \\
& \qquad \textrm{and}  \quad Q(A_0)  =  \frac{1}{n}\sum_{i_0\in[n]}\cdots\sum_{i_t\in[n]}\,\prod_{s=1}^t P(i_{s-1},i_s)\left(1-\prod_{u=1}^t{\bf 1}_{\left\{P(i_{u-1},i_u)\geq n^{-\gamma}\right\}}\right). \nonumber
\end{align}
We let also $Q_i(\cdot):=Q(\cdot|X_0=i)$ be the law starting at $i\in[n]$. Recall that all objects are implicitly indexed by the size-parameter $n$, 
and asymptotic statements are understood in the $n\to\infty$ limit. We call a trajectorial event $A$ \emph{uniformly unlikely} if
\begin{eqnarray}
\label{typical}
\max_{i\in[n]}\,Q_i\left(A\right)  \xrightarrow[n\to\infty]{\bfP}  0.
\end{eqnarray}
\begin{lemma}\label{lm:low} For $t=\cO(\log n)$ and $\gamma=\Theta(1)$, the event $A=A_0$ from \eqref{eventa} 
is uniformly unlikely.
 \end{lemma}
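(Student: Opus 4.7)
The plan is to bound $Q_i(A_0)$ by a straightforward union bound over the $t$ steps and then, at each step, reduce the estimate to a deterministic quantity depending only on the sorted row $(p_{j,1}, \ldots, p_{j,n})$. Specifically, write
$$
Q_i(A_0) \ \le \ \sum_{s=1}^{t} Q_i\bigl(W_s < n^{-\gamma}\bigr) \ = \ \sum_{s=1}^{t} \mathbb{E}_{Q_i}\bigl[f(X_{s-1})\bigr],
\qquad f(j) := \sum_{k \in [n]} p_{j,k}\,\mathbf{1}_{\{p_{j,k} < n^{-\gamma}\}},
$$
where the identity for the conditional probability uses that, given $X_{s-1}=j$, the weight $W_s$ takes value $p_{j,k}$ with probability $p_{j,k}$ (for each $k$). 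Crucially, because $P(j,\cdot)$ is only a permutation of the deterministic vector $(p_{j,1},\ldots,p_{j,n})$, the function $f$ does not depend on the environment $\sigma$ at all; the bound we obtain will therefore be deterministic, and the ``in probability'' qualifier in \eqref{typical} will be automatic.

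Next I would estimate $\max_{j}f(j)$ using the second sparsity assumption in \eqref{assume:sparse}. The condition $p_{j,k}<n^{-\gamma}$ is equivalent to $(\log p_{j,k})^{2}>\gamma^{2}(\log n)^{2}$, so Chebyshev's trick yields
$$
f(j) \ \le \ \frac{1}{\gamma^{2}(\log n)^{2}} \sum_{k\in[n]} p_{j,k}\bigl(\log p_{j,k}\bigr)^{2}
\ \le \ \frac{1}{\gamma^{2}(\log n)^{2}}\cdot o(\log n) \ = \ o\!\left(\tfrac{1}{\log n}\right),
$$
uniformly in $j \in [n]$. Plugging this into the union bound and using $t=\cO(\log n)$ together with $\gamma = \Theta(1)$ gives
$$
\max_{i\in[n]} Q_i(A_0) \ \le \ t \cdot \max_{j\in[n]} f(j) \ = \ \cO(\log n) \cdot o\!\left(\tfrac{1}{\log n}\right) \ = \ o(1),
$$
which is the required uniform unlikeliness (and in fact a deterministic bound rather than merely a convergence in probability).

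There is essentially no real obstacle here; the only thing one must notice is the ``permutation invariance'' trick that makes $f(j)$ deterministic, and the matching of the power $(\log p_{j,k})^{2}$ from \eqref{assume:sparse} with the threshold $n^{-\gamma}$ via the elementary bound $\mathbf{1}_{\{x<n^{-\gamma}\}}\le (\log x)^{2}/(\gamma\log n)^{2}$. The second moment of $\log p_{j,k}$ (rather than just its first moment $\hh=\cO(1)$) is exactly what one needs to beat the union-bound factor $t = \cO(\log n)$.
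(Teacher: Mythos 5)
Your proof is correct and follows essentially the same route as the paper: a union bound over the $t$ steps reduces the problem to the deterministic, environment-independent quantity $\max_j \sum_k p_{j,k}\mathbf{1}_{\{p_{j,k}<n^{-\gamma}\}}$, which is then controlled via the Chebyshev-type bound $\mathbf{1}_{\{x<n^{-\gamma}\}}\le (\log x)^2/(\gamma\log n)^2$ and the second sparsity condition in \eqref{assume:sparse}. Your write-up merely makes explicit the intermediate role of the function $f(j)$ and the permutation-invariance that renders the bound deterministic, both of which the paper leaves implicit.
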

\begin{proof} 
A union bound implies the deterministic estimate 
\begin{eqnarray*}
\max_{i\in[n]}\,Q_i(A_0)  \leq  t\, \max_{i\in[n]}\,\sum_{j=1}^np_{i,j}{\bf 1}_{\{p_{i,j}< n^{-\gamma}\}}.
\end{eqnarray*}
Since $u\mapsto (\log {u})^2$ is decreasing on $(0,1)$,
\begin{eqnarray*}
\max_{i\in[n]}\,Q_i(A_0) \leq \frac{t}{(\gamma\log n)^2}\, \max_{i\in[n]}\,\sum_{j=1}^n\, p_{i,j}\left(\log {p_{i,j}}\right)^2.
\end{eqnarray*}
 The conclusion follows from the assumption (\ref{assume:sparse}). 
\end{proof}

 Our main task in the rest of this section 
will be to establish:

\begin{theorem}[Trajectories of length $t$ have weight roughly $e^{-\hh t}$]\label{pr:weight}For $t=\Theta(\log n)$ and fixed $\varepsilon>0$, the event $\left\{\rho(t) \notin\left[e^{-(1+\varepsilon)\,\hh\, t},e^{-(1-\varepsilon)\,\hh \,t} \right]\right\}$ is uniformly unlikely.
\end{theorem}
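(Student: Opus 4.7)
The argument hinges on the martingale decomposition
\begin{eqnarray*}
\log \rho(t) \;=\; -\sum_{s=1}^t H_{X_{s-1}} \;+\; M_t, \qquad M_t := \sum_{s=1}^t \bigl(\log W_s + H_{X_{s-1}}\bigr),
\end{eqnarray*}
where $H_i := -\sum_j p_{i,j}\log p_{i,j}$ is the row entropy of $P$. The fact that $M_t$ is a quenched martingale with respect to the natural filtration of $X$ follows from the identity $\sum_j P(X_{s-1},j)\log P(X_{s-1},j)=-H_{X_{s-1}}$, which holds irrespective of $\sigma_{X_{s-1}}$ by row-exchangeability. By the triangle inequality, it suffices to show that the two bad events $\{|M_t|>\varepsilon\hh t/2\}$ and $\bigl\{\bigl|\sum_{s=1}^t H_{X_{s-1}}-\hh t\bigr|>\varepsilon\hh t/2\bigr\}$ are each uniformly unlikely.

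\textbf{Martingale part.} Set $V_i := \sum_j p_{i,j}(\log p_{i,j})^2$, so $\max_i V_i=o(\log n)$ by sparsity. The predictable quadratic variation is deterministically bounded by $[M]_t\le t\max_iV_i=o(\log^2 n)$. Combined with the uniform truncation $|\log W_s|\le \gamma\log n$ available on the event $A_0^c$ of Lemma \ref{lm:low}, Freedman's inequality yields the fully deterministic estimate
\begin{eqnarray*}
\max_{i\in[n]}\, Q_i\!\left(|M_t|>\tfrac{\varepsilon\hh t}{2}\right) \;\le\; \frac{t\max_iV_i}{(\gamma\log n)^2} \;+\; 2\exp\!\left(-\frac{3\varepsilon\hh\,t}{4\gamma\log n}(1+o(1))\right),
\end{eqnarray*}
in which the exponent is of order $1/\gamma$ since $t/\log n=\Theta(1)$. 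Choosing $\gamma>0$ small first and then $n\to\infty$ drives both terms to zero---uniformly in $\sigma$ and $i$---handling the martingale piece.

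\textbf{Entropy time-average and coupling.} To compare $\frac{1}{t}\sum_s H_{X_{s-1}}$ with $\hh=\frac{1}{n}\sum_i H_i$, I introduce an \emph{i.i.d.-regenerated walk} $\tilde X$ started at $\tilde X_0=i$, in which the permutation $\sigma_{\tilde X_{s-1}}$ is replaced at every step $s$ by a fresh independent uniform permutation $\tilde\pi_s$. A direct calculation shows that $\tilde X_s=\tilde\pi_s(\tilde J_s)$ is uniform on $[n]$ and independent of the past, hence $\tilde X_1,\tilde X_2,\ldots$ are i.i.d.\ uniform regardless of $i$. Consequently $H_{\tilde X_1},\ldots,H_{\tilde X_{t-1}}$ are i.i.d.\ with mean $\hh$ and second moment bounded by $\max_iV_i=o(\log n)$, and Chebyshev's inequality gives, uniformly in $i$,
\begin{eqnarray*}
\tilde Q_i\!\left(\Bigl|\sum_{s=1}^t H_{\tilde X_{s-1}}-\hh t\Bigr|>\tfrac{\varepsilon\hh t}{2}\right) \;\le\; \frac{t\max_iV_i}{(\varepsilon\hh t/4)^2}\;=\;\cO\!\Bigl(\tfrac{\max_iV_i}{\log n}\Bigr)\;\xrightarrow[n\to\infty]{}\;0,
\end{eqnarray*}
the deterministic bias $H_{\tilde X_0}=H_i\le \sqrt{V_i}=o(\sqrt{\log n})$ being negligible compared with $\hh t=\Theta(\log n)$. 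I then couple $Q_i$ and $\tilde Q_i$ through the common draws $J_s\sim p_{X_{s-1},\cdot}$: the two chains can be made to agree until the first time the walker revisits a previously seen row, at which moment the fixed $\sigma_i$ conflicts with the fresh $\tilde\pi_s$. A birthday-type estimate, exploiting the uniformity of the regenerated states, bounds the expected number of such row-revisits in $t=\Theta(\log n)$ steps by $\cO(t^2/n)=o(1)$, uniformly in the starting state.

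\textbf{Main obstacle.} The delicate point is precisely this coupling argument. Because no nontrivial upper bound is imposed on individual entries $p_{i,j}$---in contrast with \cite{bordenave2015random}, where the minimum out-degree was assumed to be at least $2$---a walker starting at a row carrying an atom close to $1$ can linger near a few states for many steps before spreading out, inflating both the row-revisit count and the discrepancy between $Q_i$ and $\tilde Q_i$. Translating the non-degeneracy condition \eqref{assume:non-degeneracy} into a quantitative, quenched and uniform-in-$i$ estimate on this revisit count is expected to be the technical heart of the proof.
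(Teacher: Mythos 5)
Your martingale decomposition $\log\rho(t)=-\sum_{s}H_{X_{s-1}}+M_t$ is a genuinely different angle from the paper's: the paper never introduces a martingale, but works directly with $\sum_s\log W_s$, compares it to i.i.d.\ size-biased samples $(W_s^\star)$, and then upgrades the resulting annealed Chebyshev bound to a quenched-uniform one with the $k$-fold higher-moment argument \eqref{higher-moment}. Your Freedman estimate on $M_t$ is correct and, importantly, deterministic in $\sigma$ (the conditional variance bound $t\max_iV_i$ and the truncated increment bound $\gamma\log n$ are both environment-free, as is the bound on $Q_i(A_0)$ from Lemma \ref{lm:low}); that part is cleaner than anything in the paper for the fluctuation.

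The entropy time-average piece, however, has a genuine gap of precisely the kind the paper flags just after \eqref{cheby}. Your Chebyshev bound for $\tilde Q_i$ is fine and $\sigma$-free, but the coupling between $Q_i$ and $\tilde Q_i$ is an \emph{annealed} coupling: it exists only once the environment $\sigma$ is generated jointly with the trajectory, and the failure probability $O(t^2/n)$ is an average over $\sigma$, not a quenched bound. Passing from $\EE_\sigma[Q_i(\text{bad})]=o(1)$ to $\max_iQ_i(\text{bad})\xrightarrow[]{\bfP}0$ via the first-moment inequality \eqref{first-moment} costs a factor of $n$, so you would need the annealed probability to be $o(1/n)$, which neither the Chebyshev estimate nor the revisit count delivers; applying Markov to the revisit count alone gives $\PP(\max_iQ_i(\text{revisit})>\delta)\le t^2/\delta$, which diverges. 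This annealed-to-quenched conversion is exactly what the higher-moment argument is for: running $k=\Theta(\log n)$ independent walks in the \emph{same} environment from the \emph{same} start, showing their combined range is near-tree-like, and coupling each walk's post-exit weights to i.i.d.\ samples at negligible cost, so that $\PP(B_k)=o(\delta^k/n)$. That machinery, and not the non-degeneracy issue you single out as the ``main obstacle,'' is the missing piece in your plan. The non-degeneracy assumption \eqref{assume:non-degeneracy} is indeed needed, but its role is to rule out weight-$\Omega(1)$ cycles (Lemma \ref{lm:high}); it does not address the quenched-uniformity problem. As written, your argument would prove the annealed analogue of Theorem \ref{pr:weight}, but not the quenched statement.
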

Let us observe here, for future reference, that if $\theta\colon \cE\to \cE$ is the operator that shifts  $x=(x_0,x_1,\ldots)$ and $w=(w_1,w_2,\ldots)$ to $x'=(x_1,x_2,\ldots)$ and $w'=(w_2,w_3,\ldots)$ respectively, then, for any $i\in[n]$, $t\in\dN$ and any event $A\subset \cE$ 
\begin{eqnarray}
\label{rk:timeshift}
Q_i(\theta^{-t}(A)) & = & \sum_{j\in[n]}P^t(i,j)Q_j(A) \ \leq \ \max_{j\in[n]}\,Q_j(A),
\end{eqnarray}
where $\theta^{-t}(A)=\{(x,w)\in \cE: \, \theta^t(x,w)\in A\}$. Thus, if $A$ is uniformly unlikely, then so is $\theta^{-t}(A)$, for any choice of $t=t(n)$.

\subsection{Sequential generation}
\label{sec:seq}
By averaging the quenched probability $Q(\cdot)$ with respect to the environment, one obtains the so-called \emph{annealed} probability, which we denote by $\PP$. In symbols, letting $\EE$ denote the associated expectation, for any event $A$ in the trajectory space:
\begin{eqnarray*}
\EE[Q(A)]= \frac1n\sum_{i=1}^n\EE[Q_i(A)] = \PP\left((X,W)\in A\right).
\end{eqnarray*}  
Markov's inequality offers a way to reduce the problem of estimating the worst-case quenched probability  $\max_{i\in[n]}Q_{i}(A)$ of a trajectorial event $A\subset \cE$ to that of controlling the corresponding annealed quantity, at the cost of an extra factor of $n$: for any $\delta>0$,
\begin{eqnarray}
\label{first-moment}
\PP\left(\max_{i\in[n]}\Q_i(A)>\delta\right)  \leq  \frac{1}{\delta}\,\EE\left[\sum_{i=1}^nQ_i(A)\right]= \frac{n}{\delta}\,\PP\left((X,W)\in A\right).
\end{eqnarray}
The analysis of the right-hand side may often be simplified by the observation that the pair $(X,W)$ can be constructed sequentially, together with the underlying environment $\sigma$, as follows: initially, $\textrm{Dom}(\sigma_i)=\textrm{Ran}(\sigma_i)=\emptyset$ for all $i\in[n]$, and $X_0$ is drawn uniformly from $[n]$; then for each $t\ge 1$, 
\begin{enumerate}
\item[$\#1.$] Set $i=X_{t-1}$ and draw an index $j\in [n]$ at random with probability $p_{i,j}$.
\item[$\#2.$] If $j\notin\textrm{Dom}(\sigma_i)$, then extend $\sigma_i$ by setting  $\sigma_i(j)=k$, where $k$ is uniform in $[n]\setminus \textrm{Ran}(\sigma_i)$.
\item[$\#3.$] In either case, $\sigma_i(j)$ is now well defined: set $X_t=\sigma_i(j)$ and $W_t=p_{i,j}$.
\end{enumerate}
Let us illustrate the strength of this sequential construction  on an important trajectorial feature. A path $(x_0,\ldots,x_t)\in[n]^{t+1}$ naturally induces a directed graph with vertex set $V=\{x_0,\ldots,x_t\}\subset[n]$ and edge set $E=\{(x_{0},x_1),\ldots,(x_{t-1},x_t)\}\subset [n]\times[n]$. As a rule, below we neglect possible multiplicities in the edge set $E$, that is every repeated edge from the path appears only once in $E$.  We define the  \emph{tree-excess} of the path $(x_{0},\ldots,x_t)$ as  
\begin{eqnarray*}
\tx(x_0,\ldots,x_t) & = &  1 + |E|-|V|.
\end{eqnarray*}
Here $|V|$ and $|E|$ denote the cardinalities of $V$ and $E$. 
In particular, $\tx(x_0,\ldots,x_t)=0$ if and only if $(x_0,\ldots,x_t)$ is a \emph{simple path} in the usual graph-theoretical sense, while $\tx(x_0,\ldots,x_t)=1$ if and only if the edge set of  $(x_0,\ldots,x_t)$ has a single cycle (the path may turn around it more than once). 
\begin{lemma}[Tree-excess]\label{lm:tx} For $t=o(n^{1/4})$, 
$\left\{\tx(X_0,\ldots,X_t) \ge 2\right\}$ is uniformly unlikely.
\end{lemma}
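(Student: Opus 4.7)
The plan is to reduce to an annealed estimate via the first-moment bound \eqref{first-moment}. If I can show $\PP((X,W)\in A)=O(t^4/n^2)$ for $A:=\{\tx(X_0,\ldots,X_t)\ge 2\}$, then for any fixed $\delta>0$,
\[
\PP\left(\max_{i\in[n]}Q_i(A)>\delta\right)\le\frac{n}{\delta}\,\PP((X,W)\in A)=O\!\left(\frac{t^4}{\delta n}\right)=o(1)
\]
under the hypothesis $t=o(n^{1/4})$. To control the annealed probability I will exploit the sequential construction of Section~\ref{sec:seq}, which reveals the environment $\sigma$ jointly with the trajectory.

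Let $V_s:=\{X_0,\ldots,X_s\}$ and $E_s:=\{(X_0,X_1),\ldots,(X_{s-1},X_s)\}$ be the partial vertex and edge sets, viewed as \emph{sets}. The function $\tx$ is non-decreasing along the path, and the increment $\tx(X_0,\ldots,X_s)-\tx(X_0,\ldots,X_{s-1})$ equals $1$ exactly when $(X_{s-1},X_s)\notin E_{s-1}$ while $X_s\in V_{s-1}$, and equals $0$ otherwise. The freshness of the edge $(X_{s-1},X_s)$ forces step~\#2 of the sequential construction to have been triggered at time $s$, in which case $X_s$ is drawn uniformly on $[n]\setminus\textrm{Ran}(\sigma_{X_{s-1}})$. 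Writing $C_s$ for this ``cycle-closing'' event and $\mathcal{F}_{s-1}$ for the $\sigma$-algebra generated by the sequential construction up to step $s-1$, the deterministic bounds $|\textrm{Ran}(\sigma_{X_{s-1}})|\le t$ and $|V_{s-1}|\le s$ yield
\[
\PP(C_s\mid \mathcal{F}_{s-1})\;\le\;\frac{|V_{s-1}|}{n-|\textrm{Ran}(\sigma_{X_{s-1}})|}\;\le\;\frac{s}{n-t}\quad\text{a.s.}
\]

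Since $\{\tx(X_0,\ldots,X_t)\ge 2\}\subseteq\bigcup_{1\le s_1<s_2\le t}(C_{s_1}\cap C_{s_2})$, conditioning first on $\mathcal{F}_{s_2-1}$ and then on $\mathcal{F}_{s_1-1}$ gives
\[
\PP(C_{s_1}\cap C_{s_2})=\EE\!\left[\mathbf{1}_{C_{s_1}}\,\PP(C_{s_2}\mid\mathcal{F}_{s_2-1})\right]\;\le\;\frac{s_1\,s_2}{(n-t)^2},
\]
and a union bound over the $\binom{t}{2}$ pairs produces $\PP((X,W)\in A)=O(t^4/n^2)$, as required. No individual step is a genuine obstacle here; the essential ingredient is the identification of cycle-closing events and the clean conditional uniform-sampling rule furnished by step~\#2, which lets the two increments of $\tx$ be decoupled without any independence assumption on the environment.
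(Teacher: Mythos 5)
Your proof is correct and takes essentially the same route as the paper: both exploit the sequential generation to bound the conditional probability of each cycle-closing increment of $\tx$ by roughly $t/n$, and both conclude via the first-moment bound \eqref{first-moment}. The only cosmetic difference is that the paper packages the two increments as stochastic domination by a Binomial$(t, t/n)$ (with the slightly tighter bound $\frac{|V_{s-1}|-|\mathrm{Ran}(\sigma_i)|}{n-|\mathrm{Ran}(\sigma_i)|}\le \frac{t}{n}$, using that $\mathrm{Ran}(\sigma_i)\subset V_{s-1}$) whereas you decouple the two increments by iterated conditioning and a union bound over pairs; both yield $O(t^4/n^2)$.
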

\begin{proof}
In the sequential generation process, we have 
$
\tx(X_0,\ldots,X_t)  =  \xi_1+\dots+\xi_t,
$
where $\xi_t\in\{0,1\}$ indicates whether or not, during the $t^{\textrm{th}}$ iteration, the random index $k$ appearing at line $\#2$ is actually drawn and satisfies $k\in\{X_0,\ldots,X_{t-1}\}$. The conditional chance of this, given the past, is at most  
\begin{eqnarray*}
\frac{\left|\{X_0,\ldots,X_{t-1}\}\right|-\left|\textrm{Ran}(\sigma_i)\right|}{n-\left|\textrm{Ran}(\sigma_i)\right|} &  \leq  & \frac{t}{n}.
\end{eqnarray*}
Thus, $\tx(X_0,\ldots,X_t)$ is stochastically dominated by a Binomial $(t,\frac{t}{n})$. In particular,  
\begin{eqnarray}
\label{bound:tx}
\PP\left(\tx(X_0,\ldots,X_t)\ge 1\right)  \, \leq\, \frac{t^2}n \qquad\textrm{ and }\qquad \PP\left(\tx(X_0,\ldots,X_t)\ge 2\right)  \, \leq\, \frac{t^4}{n^2}.
\end{eqnarray}
Now, let $n\to\infty$: since $t=o(n^{1/4})$, we have $\frac{t^4}{n^2}=o(\frac 1n)$  and (\ref{first-moment}) concludes the proof.
\end{proof}

\subsection{Approximation by i.i.d.\ samples}

 Consider the modified process $(X^\star,W^\star)$  obtained by resetting $\textrm{Dom}(\sigma_i)=\textrm{Ran}(\sigma_i)=\emptyset$ before every execution of line $\#2$, thereby suppressing any time dependency: the environment is locally regenerated \emph{afresh} at each step. In particular, the pairs $(X^\star_{t-1},W^\star_t)_{t\ge 1}$ are i.i.d.\ with law 
\begin{eqnarray}
\label{size-biased}
\PP\left(X^\star_0=i,W^\star_1 \geq w  \right)  =   \sum_{j =1}^n \frac{p_{i,j}}{n} { \bf 1}_{\{p_{i,j}\geq  w\}} \qquad (i \in [n], w \geq 0).
\end{eqnarray} 
By construction, the modified process and the original one can be coupled in such a way that they coincide until the  time \begin{eqnarray}
\label{Tcp}T := \inf\{t\ge 0\colon \tx(X_0,\ldots,X_{t})=1\},
\end{eqnarray} that is the first time  a state gets visited for the second time. Thus, on the event $\{T\ge t\}$, 
\begin{eqnarray}
\label{coupling}
(X_0^\star,\ldots,X_t^\star)=(X_0,\ldots,X_t) & \textrm{ and } & (W_1^\star,\ldots,W_t^{\star})=(W_1,\ldots,W_t).
\end{eqnarray}
We exploit this observation to establish a preliminary step towards Theorem \ref{pr:weight}. Notice that the estimate below becomes trivial if the parameters $p_{i,j}$ are such that $p_{i,j}\leq 1-\varepsilon$ for some fixed $\varepsilon>0$. In the general case it relies on the non-degeneracy assumption \eqref{assume:non-degeneracy}.  
\begin{lemma}
\label{lm:high}
If $t=\Theta(\log n)$ and $\delta=o(1)$, then $\left\{\rho(t)> e^{-\delta t}\right\}$ is uniformly unlikely. 
\end{lemma}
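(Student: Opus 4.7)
The plan is to combine the iid approximation of Section~\ref{sec:seq} with an exponential Markov bound, relying on the non-degeneracy assumption~\eqref{assume:non-degeneracy}. Write $A:=\{\rho(t)>e^{-\delta t}\}$. By the coupling \eqref{coupling}, on $\{T\ge t\}$ we have $\rho(t)=\rho^{\star}(t)=\prod_{s=1}^t W^{\star}_s$ with the weights independent of distribution \eqref{size-biased}, hence
\[
Q_i(A) \;\le\; Q^{\star}_i(A) \;+\; Q_i(T<t).
\]

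For the iid piece, Markov's inequality with a free parameter $\lambda>0$ and the independence in \eqref{size-biased} give
\[
Q^{\star}_i(A)\;\le\; e^{\lambda\delta t}\,\EE_{Q^{\star}_i}[\rho^{\star}(t)^{\lambda}] \;\le\; e^{\lambda\delta t}\bigl(\bar r^{(\lambda)}\bigr)^{t-1},
\]
where $\bar r^{(\lambda)}:=\tfrac1n\sum_{i,j}p_{i,j}^{1+\lambda}$ and I have used $r_i^{(\lambda)}:=\sum_j p_{i,j}^{1+\lambda}\le 1$. Splitting rows according to whether $\max_j p_{i,j}$ exceeds $1-\varepsilon$,
\[
\bar r^{(\lambda)} \;\le\; (1-\varepsilon)^{\lambda}+a_\varepsilon,\qquad a_\varepsilon:=\tfrac1n\bigl|\{(i,j):p_{i,j}>1-\varepsilon\}\bigr|,
\]
and by \eqref{assume:non-degeneracy} one can choose $\varepsilon$ small and $\lambda$ large so that $\bar r^{(\lambda)}$ is as small as desired. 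Since $\delta=o(1)$ and $t=\Theta(\log n)$, this yields a deterministic bound $Q^{\star}_i(A)\le n^{-K}$ for any prescribed $K$, uniformly in~$i$.

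For the coupling-error term I would note that under the coupling $T$ equals the first revisit time $T^{\star}$ of the iid sequence $(X^{\star}_s)$, for which the birthday bound gives $\PP(T^{\star}=s)\le s/n$. On $\{A\cap\{T<t\}\}$ the coupling and $W_s\le 1$ yield $\rho^{\star}(T)=\rho(T)\ge\rho(t)>e^{-\delta t}$, and the independence of the iid sequences $(W^{\star}_s)$ and $(X^{\star}_s)$ (manifest in \eqref{size-biased}) decouples $\rho^{\star}(s)$ from $\mathbf{1}_{T^{\star}=s}$, leading to the annealed bound
\[
\PP_i\bigl(A\cap\{T<t\}\bigr) \;\le\; \sum_{s=1}^{t-1}\PP\bigl(\rho^{\star}(s)>e^{-\delta t}\bigr)\cdot \frac{s}{n}.
\]
Applying the same iid Markov estimate at each time $s$ and summing, the right-hand side can be made $o(1/n)$ by choosing $\lambda$ large enough. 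A Markov inequality on the environment followed by a union bound over $i\in[n]$ then produces $\max_i Q_i(A)\xrightarrow[n\to\infty]{\bfP}0$.

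The main technical hurdle is the joint calibration of $\lambda$ and $\varepsilon$: the iid exponential decay must beat simultaneously the factor $n$ from the union bound over starting states, the prefactor $e^{\lambda\delta t}$ from Markov's inequality, and the birthday factor $s/n$ in the error term. This is precisely the leverage afforded by non-degeneracy, which allows $\bar r^{(\lambda)}$ to be driven below any given threshold by a suitable choice of $\varepsilon$ and $\lambda$.
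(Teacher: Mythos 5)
Your iid Markov--inequality bound on the main piece $\{T\geq t\}$ is essentially correct and plays the same role as Bennett's inequality in the paper (both use non-degeneracy~\eqref{assume:non-degeneracy} to drive the per-step decay constant up, then aggregate over $t=\Theta(\log n)$ steps to beat the extra factor $n$ from~\eqref{first-moment}). The genuine gap is in your treatment of the coupling error. The claim that
\[
\sum_{s=1}^{t-1}\PP\bigl(\rho^{\star}(s)>e^{-\delta t}\bigr)\cdot \frac{s}{n}
\]
``can be made $o(1/n)$ by choosing $\lambda$ large enough'' is false. The hypotheses allow $\delta t\to\infty$ (and this regime is exactly what is needed later: the lemma is invoked in the proof of Theorem~\ref{pr:weight} with $\delta s=4\log\log n$, and in Lemma~\ref{tx1} with $\delta h=\Theta(1)$), and when $\delta t\to\infty$ the very first term already gives $\tfrac1n\PP(W_1^\star>e^{-\delta t})=\tfrac{1-o(1)}{n}$, since $e^{-\delta t}\to 0$. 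More generally the range $s\lesssim \delta t$ contributes $\Theta((\delta t)^2/n)$, which is $\omega(1/n)$. Increasing $\lambda$ does not help: the crossover scale $s_0\sim \lambda\delta t/\log(1/\bar r^{(\lambda)})$ remains $\Theta(\delta t)$, because numerator and denominator both grow linearly in $\lambda$. In short, after the first revisit the raw inequality $\rho(T)>e^{-\delta t}$ is simply too weak when $T$ is small, and your bound cannot rule out those short-time revisits.

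The missing idea is the paper's per-cycle renormalization. On $\{\tx(X_0,\dots,X_t)=1\}$, after the first revisit the path must traverse a single cycle of length $\ell$ for $r$ turns, and since a third of the trajectory can be forced into the looping part, $r\ell\gtrsim t/3$. Then $\rho(t)>e^{-\delta t}$ forces the per-turn weight $\rho$ to satisfy $\rho^r>e^{-\delta t}$, i.e.\ $\rho>e^{-3\delta\ell}$: the target is now calibrated to the cycle length $\ell$, not to $t$. This is the event $C_{3\delta}$, whose annealed probability is $\frac{1}{n}\sum_{\ell\geq 1}e^{-\ell\phi(q)}=\frac{1}{n(e^{\phi(q)}-1)}=o(1/n)$ because non-degeneracy makes $\phi(q)\to\infty$. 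Without replacing $e^{-\delta t}$ by $e^{-c\delta\ell}$ in the revisit contribution, your decomposition cannot close.
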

\begin{proof}
Call $(x_0,\ldots,x_t)$ a \emph{cycle} if $(x_0,\ldots,x_{t-1})$ is simple and $x_t=x_0$. We will show:
\begin{enumerate}[(i)]
\item for $t$ and $\delta$ as above, $B:=\{\rho(t)> e^{-\delta t}, \tx(X_0,\ldots,X_t)=0\}$ is uniformly unlikely;
\item for $\delta=o(1)$, $C_\delta:=\left\{\exists s\ge 1\colon (X_0,\ldots,X_s)\textrm{ is a cycle}, \rho(s)> e^{-\delta s}\right\}$ is uniformly unlikely.
\end{enumerate}
Let us first show that this is sufficient to conclude the proof. Indeed, the event 
$$A:=\{\rho(t)>e^{-\delta t}\}=\{(x,w)\in \cE:\, w_1\cdots w_t>e^{-\delta t}\},$$
can be partitioned according to the size of $\tx(x_0,\dots,x_t)$. Therefore 
$$
A\subset B \cup \{\tx(x_0,\dots,x_t)=1,\,w_1\cdots w_t>e^{-\delta t}\}\cup  \{\tx(x_0,\dots,x_t)\geq 2\}
\,.
$$
The event $\{\tx(x_0,\dots,x_t)\geq 2\}$ is uniformly unlikely, thanks to Lemma \ref{lm:tx}.
The event $B$ is uniformly unlikely, by (i) above. The event $\{\tx(x_0,\dots,x_t)=1,\,w_1\cdots w_t>e^{-\delta t}\}$ on the other hand is contained in the union of the following three events:
\begin{itemize}
\item $\left\{\tx\left(x_0,\ldots,x_{\lfloor t/3\rfloor }\right)=0, \,w_1\cdots w_{\lfloor t/3\rfloor}>e^{-\delta t}\right\}$
\item $\left\{\tx\left(x_{\lceil 2t/3\rceil},\ldots,x_t\right)=0, \,w_{\lceil 2t/3\rceil}\cdots w_t>e^{-\delta t}\right\}$ 
 \item $\left\{\tx\left(x_0,\ldots,x_t\right)=\tx(x_0,\ldots,x_{\lfloor t/3\rfloor})=\tx(x_{\lceil 2t/3\rceil},\ldots,x_{t})=1, \,w_{1}\cdots w_t>e^{-\delta t}\right\}$
\end{itemize}
The first two cases are uniformly unlikely by (i) and by the observation (\ref{rk:timeshift}).
To handle the third case, observe that 
if $\tx(x_0,\dots,x_t)=1$, then
the path $(x_0,\dots,x_t)$ can be rewritten as $(x_0,\dots,x_a,\dots,x_{a+r\ell},\dots,x_t)$, where $(x_0,\dots,x_a)$ and $(x_{a+r\ell},\dots,x_t)$ are simple paths, while the path  $
( x_a,\dots,x_{a+r\ell})$ consists of $r$ complete turns around a cycle of length $\ell$. Here $a\geq 0$, $r,\ell\geq 1$ and $a+r \ell\leq t$.  If $\tx(x_0,\ldots,x_{\lfloor t/3\rfloor})=\tx(x_{\lceil 2t/3\rceil},\ldots,x_{t})=1$, then the two simple paths must have lengths less than $t/3$ and therefore $r \ell >  t/3$. 
If $\rho=w_{a+1}\cdots w_{a+\ell}$ is the weight associated to one turn around the cycle, then  $w_1\cdots w_t>e^{-\delta t}$ implies $\rho^r>e^{-\delta t}$ and therefore $\rho> e^{-3\delta\ell}$. It follows that the shifted trajectory $\theta^{\lfloor t/3\rfloor}(x,w)$ must belong to $C_{3\delta}$. 
Using  (\ref{rk:timeshift}) and (ii) above, this is uniformly unlikely. 

It remains to prove (i) and (ii). 
By (\ref{coupling}), we have $$\textstyle{\left\{(X,W)\in B\right\}\subset\left\{W_1^\star\cdots W_t^\star > e^{-\delta t}\right\}\subset\left\{\sum_{s=1}^t{\bf 1}_{\{W_s^\star<e^{-2\delta}\}} < \frac{t}{2}\right\}}.$$ Now $\sum_{s=1}^t{\bf 1}_{\{W_s^\star<e^{-2\delta}\}}$ is  Binomial$(t,q)$ with 
$q = \PP\left(W_1^\star<e^{-2\delta}\right)$. 
  Thus, Bennett's inequality yields  
\begin{eqnarray}
\label{benett1}
\PP\left((X,W)\in B\right)  \le  e^{-t\phi\left(q\right)},
\end{eqnarray}
for some universal function $\phi\colon[0,1]\to\dR_+$ that diverges at $1^-$ (more precisely,  \cite[Theorem 2.9]{MR3185193} gives $\phi(q) = \sigma^2 h ( (q-1/2) / \sigma^2 )$ with $\sigma^2 =q (1 - q) $, $h(x) = (x+1) \log (x+1) - x$ for $x \geq 0$ and $0$ otherwise). From \eqref{size-biased}, 
$$1-q = \PP\left(W_1^\star\geq e^{-2\delta}\right) = \frac1n\sum_{i,j=1}^n
p_{i,j}{\bf 1}_{\{p_{i,j}\geq e^{-2\delta}\}}.$$
Now, let $n\to\infty$. Since $\delta\to 0$, the assumption (\ref{assume:non-degeneracy}) ensures that $q\to 1$, so that 
\eqref{benett1} implies $\PP\left((X,W)\in B\right)=o(\frac1n)$. From the first moment argument \eqref{first-moment} one obtains part (i). 

To prove part (ii), observe that the coupling \eqref{coupling} implies 
$$\{(X,W)\in C_\delta\}\subset \bigcup_{s\ge 1}\{W_1^\star\cdots W_s^\star > e^{-\delta s}, X_s^\star=X_0^\star\}.$$ Since $X_s^\star$ is independent of the other variables and uniform, the  argument for \eqref{benett1} shows that
\begin{eqnarray}
\label{benett2}
\PP\left((X,W)\in C_\delta\right)  \leq  \frac{1}{n}\sum_{s\ge 1}e^{-s\,\phi(q)}  =  \frac{1}{n(e^{\phi(q)}-1)}.
\end{eqnarray}
Letting $n\to\infty$, the conclusion follows as above. 
\end{proof}
\subsection{Proof of Theorem \ref{pr:weight}}
The event $A=\left\{\rho(t) \notin\left[e^{-(1+\varepsilon)\,\hh\, t},e^{-(1-\varepsilon)\,\hh \,t} \right]\right\}$ can be written as 
\begin{eqnarray*}
A=\left\{\left|1-\frac{1}{\hh\, t}\sum_{s=1}^t\log\frac{1}{W_s}\right| > \varepsilon\right\}.
\end{eqnarray*} 
We are going to prove 
the uniform unlikeliness of $A$ for any fixed $\varepsilon>0$ and $t=\Theta(\log n)$. 
First note that, by (\ref{bound:tx}), the random time $T$ defined in \eqref{Tcp} satisfies  $$\PP(T\leq t)\leq \frac{t^2}n=o(1).$$  Combining this with (\ref{coupling}), we see that 
\begin{eqnarray}\label{weas}
\PP\left((X,W)\in A\right) 
 =  \PP\left(\left|1-\frac{1}{\hh\, t}\sum_{s=1}^t\log\frac{1}{W_s^\star}\right|>\varepsilon\right)+ o(1),
\end{eqnarray}
where $(W_1^\star,\ldots,W_t^\star)$ are i.i.d.\ with law determined by (\ref{size-biased}). Now, the variable $Y:=\frac{1}\hh \log\frac 1{W_1^\star}$ has mean $1$ by  definition of $\hh$. From \eqref{size-biased} and the assumption (\ref{assume:sparse}), one has $\EE[Y^2]=o(\log n)$. In particular, the variance of $Y$ satisfies
$
{\rm Var}(Y)=o(\log n)
$. Therefore, 
\begin{eqnarray}
\label{cheby}
\EE\left[\left(1-\frac{1}{\hh\,t}\sum_{s=1}^t\log\frac{1}{W_s^\star}\right)^2\right] =\frac1t\,{\rm Var}(Y) \xrightarrow[n\to\infty]{}  0.
\end{eqnarray}
By Chebychev's inequality, \eqref{cheby} and \eqref{weas} already show that $\PP\left((X,W)\in A\right)  \to 0$. However, this is not enough to guarantee the uniform unlikeliness of $A$, due to the extra factor $n$ appearing on the \textsc{rhs} of (\ref{first-moment}). To overcome this difficulty, we will use a more elaborate approach, based on the following higher-order version of (\ref{first-moment}). For any event $B$ in the trajectory space,  for any $\delta>0$ and $k\in\dN$,
\begin{eqnarray}
\label{higher-moment}
\PP\left(\max_{i\in[n]}Q_{i}(B)>\delta\right) \, \leq \, \frac{1}{\delta^k}\,\EE\left[\sum_{i=1}^n\left(Q_{i}(B)\right)^k\right]  \,=\,  \frac{n}{\delta^k}\,\PP\left(\bigcap_{\ell=1}^k\left\{(X^\ell,W^\ell)\in B\right\}\right),
\end{eqnarray}
where the processes $(X^1,W^1),\ldots,(X^k,W^k)$ are formed by generating a random environment $\sigma$ and a uniform state $\cI\in[n]$, and conditionally on that, by running $k$ independent $P_\sigma-$Markov chains in the same environment $\sigma$, with the same starting node $\cI$. We will fix $\delta>0$ and prove that for suitable choices of the event $B$, the right-hand side of (\ref{higher-moment}) is $o(1)$ for \begin{eqnarray}
\label{def:k}
k  :=  \left\lfloor\frac{\log n}{2\log(1/\delta)}\right\rfloor.
\end{eqnarray}  First observe that the variables $(X_s^1,W_s^1)_{0\leq s\leq t},\ldots, (X_s^k,W_s^k)_{0\leq s\leq t}$ can again be constructed sequentially, together with $\sigma$: pick $\cI$ uniformly in $[n]$, set $X_0^1=\cI$, and construct $(X_s^1,W_s^1)_{1\leq s\leq t}$ by repeating $t$ times the instructions $\#1,\#2$ and $\#3$ of subsection \ref{sec:seq}. Then set $X^2_0=\cI$, construct  $(X_s^2,W_s^2)_{1\leq s\leq t}$ similarly (without re-initializing the environment), and so on. Note that $kt$ iterations are performed in total. The union of the graphs induced by the first $\ell$ paths $(X^j_1,\dots,X^j_t)$, $j=1,\dots,\ell$, forms a certain graph $G_\ell=(V_\ell,E_\ell)$, and the argument used for Lemma \ref{lm:tx} shows that  $\tx(G_\ell):=1+|E_\ell|-|V_\ell|$ satisfies
\begin{eqnarray*}
\PP(\tx(G_\ell)\ge 2)  \leq  \frac{(kt)^4}{2n^2} = o\left(\frac{\delta^k}n\right),
\end{eqnarray*}
where we use the fact that by (\ref{def:k}) one has $\delta^k = \Theta(n^{-1/2})$. In view of (\ref{higher-moment}), this reduces our task to showing that $\PP(B_k)=o\left(\frac{\delta^k}n\right)$, where, for any $\ell=1,\dots,k$, we define the event 
\begin{eqnarray}
\label{bk}
B_\ell  :=  \left\{\tx(G_\ell)\le 1\right\} \cap \left\{(X^1,W^1)\in B\right\}\cap\ldots\cap\left\{(X^\ell,W^\ell)\in B\right\}.
\end{eqnarray}
Note that $B_k\subset B_{k-1} \cdots \subset B_1$. 
We will actually show that $\PP\left(B_\ell|B_{\ell-1}\right)=o(1)$ uniformly in $2\leq \ell\leq k$ and that $\PP(B_1)=o(1)$. 
This will be enough to conclude, since 
for $k=\Theta(\log n)$ one has
$$
\PP(B_k)=\PP(B_1)\prod_{\ell=2}^k\PP\left(B_\ell|B_{\ell-1}\right)=o\left(\frac{\delta^k}n\right).
$$ 
To prove Thorem \ref{pr:weight} we now apply the above strategy with two  choices of the event $B$. 
%

\bigskip

\noindent {\bf Uniform unlikeliness of $\left\{\rho(t)< e^{-(1+\varepsilon)\,\hh\, t}\right\}$.} 
Define the event 
\begin{eqnarray*}
B :=  \left\{W_1\cdots W_t < e^{-(1+\varepsilon)\hh\, t}\right\}\cap \left\{\min(W_1,\ldots,W_t)>n^{-\gamma}\right\}, \qquad  \gamma:=\frac{\varepsilon t}{4\ts}. 
\end{eqnarray*}
We use the method described above, i.e., we prove that $\PP(B_1)=o(1)$ and \begin{eqnarray}
\label{bka}\PP\left(B_\ell|B_{\ell-1}\right)=o(1),\end{eqnarray} uniformly in $2\leq \ell\leq k$, with $k$ given by (\ref{def:k}) and $B_k$ defined as in (\ref{bk}). 
Notice that once \eqref{bka} has been proved, the previous observations together with Lemma \ref{lm:low} imply that the event $\left\{\rho(t)< e^{-(1+\varepsilon)\hh\, t}\right\}$ is uniformly unlikely, thus establishing one half of Theorem \ref{pr:weight}.

To prove \eqref{bka}, first observe that $\PP(B_1)$ is bounded from above by \eqref{weas}, so that $\PP(B_1)=o(1)$ follows from \eqref{cheby}. Next, 
fix $2\leq \ell\leq k$, assume that the first $\ell-1$ walks have already been sequentially generated and that $B_{\ell-1}$ holds, and let us evaluate the conditional probability that $(X^\ell,W^\ell)\in A$. 
We distinguish between two scenarios, depending on the random times
\begin{eqnarray*}
\tau := \inf\left\{s\ge 1\colon (X^\ell_{s-1},X^\ell_{s})\notin E_{\ell-1}\right\} & \textrm{ and } &
\tau':=\inf\left\{s\ge 0\colon W^\ell_1\cdots W_s^\ell\le n^{-\gamma} \right\}.
\end{eqnarray*}
Since $n^{-\gamma}= e^{-\varepsilon\,\hh\, t/4}$, we may clearly restrict to the case $t\geq \tau'$, otherwise the event $\rho(t)<e^{-(1+\varepsilon)\hh\, t}$ is trivially false.

\emph{Case I: \;$\tau'<\tau$ and $t\geq \tau'$.} Let $F$ denote the event $\{\tau'<\tau\}\cap\{t\geq \tau'\}$. We show that $\PP(F | B_{\ell-1}) = o(1)$. For any $1\leq s\leq t$, let $\cG_s$ denote the set of directed paths in the graph $G_{\ell-1}$, with  length $s$ and starting node $\cI$.  The condition $\tx(G_{\ell-1})\le 1$ ensures that  $G_{\ell-1}$ is a directed tree with at most one extra edge. Thus, for every vertex $v\in V_{\ell-1}$ there are at most 2 directed paths of length $s$ from the given vertex $\cI$ to $v$. It follows that $|\cG_s|\leq 2|V_{\ell-1}|\leq 2kt$. 
If $F$ holds, and $\tau'=s$, then $(X_0^\ell,\ldots,X_{s}^\ell)$ is one of the paths in $\cG_s$ 
with weight at most $n^{-\gamma}$. By definition, each such path has conditional probability at most $n^{-\gamma}$ to be actually followed by the $\ell$th walk. Summing over the possible values of $\tau'$, we find that the conditional probability  of $F$ is less than $2kt^2n^{-\gamma}=o(1)$. 

\emph{Case II: \;$\tau\le \tau' \le t$.} Let $F'$ denote the event $\{\tau\le \tau' \le t\}$. We show that $\PP(B_\ell\cap F'  | B_{\ell-1}) = o(1)$. 
On the event $F'$ one has $W_1^\ell\cdots W_{\tau-1}^\ell> n^{-\gamma}$. Since $B$ includes the condition $\min(W_1,\ldots,W_t)>n^{-\gamma}$, and therefore $W_\tau>n^{-\gamma}$,  for $(X^\ell,W^\ell)$ to fall in $B$ we must have
\begin{eqnarray}
\label{cW}
W_{\tau+1}^\ell\cdots W_{t}^\ell & < & n^{2\gamma}e^{-\hh(1+\varepsilon)t} \ = \ e^{-\hh(1+\frac{\varepsilon}{2})t}.
\end{eqnarray}
Now, the condition $j\notin\textrm{Dom}(\sigma_i)$ in line $\#2$ of the sequential generation process is actually satisfied when the $\ell{\textrm{th}}$ walk exits $G_{\ell-1}$, so $X_{\tau}$ is constructed by sampling $\sigma_i(j)$ uniformly in $[n]\setminus\textrm{Ran}(\sigma_i)$.  Since $\sum_i|\textrm{Ran}(\sigma_i)|\le kt$, this random choice and the subsequent ones can be coupled with i.i.d.\ samples from the uniform law on $[n]$ at a total-variation cost less than $\frac{kt^2}{n}=o(1)$. This induces a coupling between $W_{\tau+1}^\ell\cdots W_{t}^\ell$ and a product of (less than $t$) i.i.d.\ variables with law (\ref{size-biased}), and it follows from (\ref{weas})-(\ref{cheby}) that 
 (\ref{cW}) occurs with probability $o(1)$. 

\bigskip

\noindent {\bf Uniform unlikeliness of $\left\{\rho(t)> e^{-(1-\varepsilon)\,\hh\, t}\right\}$.} Let us define the event 
\begin{eqnarray*}
B  :=  \left\{W_1\cdots W_t> e^{-(1-\varepsilon)\hh\, t}\right\}\bigcap \left\{W_1\cdots W_s \le  (\log n)^{-4}\right\}, \qquad  s:=\left\lfloor\frac{\varepsilon t}{2-\varepsilon}\right\rfloor. 
\end{eqnarray*}
We use the same method as above, with this new definition of $B$. Notice that if we prove that $B$ is uniformly unlikely, then it follows from 
Lemma \ref{lm:high} that $\left\{\rho(t)> e^{-(1-\varepsilon)\hh\, t}\right\}$ is also uniformly unlikely, thus completing the  proof Theorem \ref{pr:weight}. 

We need to prove \eqref{bka} with the current definition of the sets $B_j$; see \eqref{bk}. First observe that $\PP(B_1)=o(1)$ follows again as in \eqref{weas}-\eqref{cheby}. Next, 
fix $2\leq \ell\leq k$, assume that the first $\ell-1$ walks have already been sequentially generated and that $B_{\ell-1}$ holds, and let us evaluate the conditional probability that $(X^\ell,W^\ell)\in B$. 
As before, we let $\tau$ be the first exit from $G_{\ell-1}$. We distinguish two cases.

\emph{Case I: $\tau> s$.} We proceed as in case I above. If $B_\ell\cap \{\tau>s\}$ holds, then $(X_0,\ldots,X_s)$ must be one of the paths in the set $\cG_s$, with weight at most $(\log n)^{-4}$. As before, there are less than $2kt$ 
possible paths, each having conditional probability at most $(\log n)^{-4}$ to be actually followed. Therefore, 
$\PP(B_\ell\cap  \{\tau>s\}|B_{\ell-1}) \leq 2kt(\log n)^{-4}= o(1)$. 

\emph{Case II: $\tau\le s$.} On this event, reasoning as in case II above, one sees that $(W_{s+1}^\ell,\ldots,W_t^\ell)$ can be coupled with $(t-s)$ i.i.d.\ variables with law (\ref{size-biased}) with an error  $o(1)$ in total variation, and  (\ref{weas})-(\ref{cheby}) then implies that their product will be below $e^{-(1-\frac{\varepsilon}{2})\hh(t-s)}$ with probability $1-o(1)$. But $e^{-(1-\frac{\varepsilon}{2})\hh(t-s)}\le e^{-(1-\varepsilon)\hh\, t}$ by our choice of $s$.

\section{Proof of the lower bound in Theorem \ref{th:main}}
\label{sec:lower}

In this section we prove the simpler half of Theorem \ref{th:main}, namely  the lower bound \eqref{cutoff1}. We shall actually prove  \eqref{cutoff1} with $\pi$ replaced by  $\widehat \pi$ given in \eqref{proxy}, as justified in Section \ref{sub:proxy}. 

Fix the environment $\sigma$, an arbitrary probability measure $\nu$ on $[n]$, $t\in\dN$, $\theta\in(0,1)$ and $i,j\in [n]$. Since $P^t(i,j)=Q_i(X_t=j)$, we have
\begin{eqnarray}
\label{ineq}
P^t(i,j)  \geq  Q_i(X_t=j,\rho(t)\le \theta).
\end{eqnarray}
If equality holds in this inequality, then clearly 
\begin{eqnarray*}
\nu(j)-Q_i(X_t=j,\rho(t)\le \theta) \, \leq \, \left[\nu(j)-P^t(i,j)\right]_+,\end{eqnarray*}
where $[x]_+:=\max(x,0)$. On the other-hand, if the inequality (\ref{ineq}) is strict, then 
 there must exist a path of length $t$ from $i$ to $j$ with weight $>\theta$, implying that $P^t(i,j)>\theta$ and hence that
 \begin{eqnarray*}
\nu(j)-Q_i(X_t=j,\rho(t)\le \theta)\, \leq \, \nu(j){\bf 1}_{\{P^t(i,j)>\theta\}}.
\end{eqnarray*}
In either case, we have
\begin{eqnarray*}
\nu(j)-Q_i(X_t=j,\rho(t)\le \theta) \, \leq \, \left[\nu(j)-P^t(i,j)\right]_+ + \nu(j){\bf 1}_{\{P^t(i,j)>\theta\}}.
\end{eqnarray*}
Summing over all $j\in [n]$, the left hand side above yields the probability $Q_i(\rho(t)> \theta)$, while the first term in the right hand side gives the total variation norm $\|\nu-P^t(i,\cdot)\|_{\textsc{tv}}$. 
On the other hand, the Cauchy--Schwarz and Markov inequalities imply
$$
\left(\sum_{j\in[n]}\nu(j){\bf 1}_{\{P^t(i,j)>\theta\}}\right)^2\leq \sum_{j\in[n]}\nu(j)^2 \sum_{\ell\in[n]}{\bf 1}_{\{P^t(i,\ell)>\theta\}} 
\leq \frac1\theta \sum_{j\in[n]}\nu(j)^2 .
$$
Summarizing, \begin{eqnarray}\label{lowerb}
Q_i(\rho(t)> \theta)  \leq  \|\nu-P^t(i,\cdot)\|_{\textsc{tv}} + \sqrt{\frac{1}{\theta}\sum_{j\in [n]}\nu(j)^2}\,.
\end{eqnarray}
We now specialize to $\theta=\frac{\log^3 n}{n}$ and $\nu=\widehat\pi$ as in (\ref{proxy}). If $t = (\lambda + o (1) ) \ts$ with $0< \lambda < 1$ fixed, then for some $\varepsilon>0$ one has $e^{-(1+\varepsilon)\hh\,t} > \theta$ for all $n$ large enough. Therefore, from Theorem \ref{pr:weight}, we have $$\min_{i\in[n]}\,Q_i(\rho(t)> \theta)\,\xrightarrow[n\to\infty]{\bfP} \, 1.$$  
To conclude the proof, it remains to verify that the square-root term in \eqref{lowerb} converges to zero in probability. Below, we prove the stronger
estimate \begin{eqnarray}\label{secm}
\EE\left[\sum_{j\in [n]}\widehat\pi(j)^2\right]  =  o(\theta).
\end{eqnarray}
Fix $h:=\lfloor\frac{\ts}{10}\rfloor$. 
The left-hand-side of \eqref{secm} may be rewritten as $\PP\left(X_{h}=Y_{h}\right)$, where conditionally on the environment $\sigma$, $X$ and $Y$ denote two independent $P_\sigma-$Markov chains, each starting from the uniform distribution on $[n]$. To evaluate this annealed probability, we generate the chains sequentially, together with the environment, as follows: we pick $X_0$ uniformly in $[n]$, and construct $(X_1,\ldots,X_{h})$ by repeating $t$ times the instructions $\#1,\#2$ and $\#3$ of subsection \ref{sec:seq}. We then pick $Y_0$ uniformly at random in $[n]$, and construct $(Y_1,\ldots,Y_{h})$ similarly, without re-initializing the environment. Now, observe that $\{X_{h}=Y_{h}\}\subset \{S\le h\}$, where $$S=\inf\left\{s\ge 0\colon Y_s\in \{X_0,\ldots,X_{h},Y_0,\ldots,Y_{s-1}\}\right\}.$$ By uniformity of the random choices made at each execution of the instruction $\#2$, we have for $0\le s\le h$,
\begin{eqnarray*}
\PP\left(S=s\right)  \leq  \frac{|\{X_0,\ldots,X_{h},Y_0,\ldots,Y_{s-1}\}|}{n} \ \le \ \frac{2h+1}n.
\end{eqnarray*}
By a union bound, we see that $\PP(S\le h) \le \frac{2(h+1)^2}{n}$, which is $o(\theta)$ thanks to our choice of $\theta$. 

\newcommand{\e}{\varepsilon}
\renewcommand{\d}{\delta}

\section{Proof of the upper bound in Theorem \ref{th:main}}
\label{sec:upper}
The overall  strategy of the proof  is similar to that introduced in \cite{bordenave2015random}. 
Before entering  the details of the proof,  let us give a brief overview of the main steps involved. 

Fix the environment and, for every $i,j\in[n]$, define a suitable set of \emph{nice} paths $\cN_t(i,j)$ that go from $i$ to $j$ in $t$ steps, where $t=(\lambda+o(1))\ts$, with $\lambda>1$. Call $P_0^t(i,j)$ the probability that the walk started at $i$ arrives in $j$ after $t$ steps by following one of the paths in $\cN_t(i,j)$. Clearly, $P_0^t(i,j)\leq P^t(i,j)$, and therefore,  for any  probability $\nu$ on $[n]$, any $\d>0$, one 
has 
\begin{align}\label{totvar1}
\|\nu-P^t(i,\cdot)\|_{\textsc{tv}} &=  \sum_{j\in [n]}\left[\nu(j)-P^t(i,j)\right]_+
 \leq  \sum_{j\in [n]}\left[\nu(j)(1+\delta)+\frac{\delta}{n}-P^t_0(i,j)\right]_+ .
\end{align}
Suppose now that, for some $\delta>0$, and some $\nu$,  for all $i,j\in[n]$, one has 
\begin{eqnarray}
\label{main}
P^t_0(i,j)  \leq  (1+\delta)\nu(j)+\frac{\delta}{n}.
\end{eqnarray}
In this case we can compute the sum in \eqref{totvar1} to obtain, for all $i\in[n]$,
 \begin{align}\label{totvar2}
\|\nu-P^t(i,\cdot)\|_{\textsc{tv}} \leq q(i) + 2\delta,
\end{align}
where $q(i)$ is the probability that a walk of length $t$ started at $i$ does not follow one of the nice paths in $\cN_t(i)=\cup_{j}\cN_t(i,j)$, i.e.\ $$q(i)= \sum_{j\in[n]}(P^t(i,j)-P^t_0(i,j)).$$
As explained in Section \ref{sub:proxy}, we want to prove that 
\begin{eqnarray}
\label{eq:goalnu}
\max_{i\in[n]}\|\nu-P^t(i,\cdot)\|_{\textsc{tv}}  \xrightarrow[]{\bfP}  0,
\end{eqnarray} 
when $\nu=\widehat\pi$. Thus, roughly speaking, the key to the proof of the upper bound is  to define the set of nice paths $\cN_t(i,j)$ in such a way that: \begin{enumerate}
 \item \label{p11} $q(i)$ vanishes in probability, 
 and 
 \item \label{p22} for any $\delta>0$ the bound \eqref{main} holds with high probability if we choose $\nu=\widehat\pi$.
 \end{enumerate} 

The organization of this section is as follows. In Subsection \ref{subsec:fogra}, we will start by defining a forward graph and a forward tree rooted at a vertex. These are then used to define the set of nice paths in Subsection \ref{nice}. In Proposition \ref{notnice}, we will prove that Property \eqref{p11} above holds. In Subsection \ref{subsec:43upbo}, we will prove that Property \eqref{p22} holds (Proposition \ref{upnice}) and conclude the argument.  Throughout this section we will use the following notation;
we refer to Remark \ref{rq:constant} below for more comments on the choice of the constants involved. 

\bigskip
\noindent {\bf Notation}. 
We fix  $0 < \veps <1/20$, and \begin{equation}\label{eq:choicet}
t := (1+ \veps)\, \ts.  
\end{equation} 
Moreover, we set 
\begin{equation}\label{eq:choiceh}
h := \left\lfloor\frac{ \ts}{10}\right\rfloor,\qquad \underline \hh = \hh ( 1 - \tfrac{ \e}{2} )\quad  \hbox{ and }  \quad \overline \hh = \hh ( 1 + \e ).
\end{equation}

\bigskip
For any path ${\rm p}:=(x_0,\dots,x_s)\in[n]^{s+1}$, $s\in\dN$, the weight of ${\rm p}$ is defined by  \begin{equation}\label{weighto}w({\rm p}) = P(x_0,x_1)\cdots P(x_{s-1},x_s).\end{equation}

\subsection{The forward graph $\cG_x(s)$ and the spanning tree $\cT_x(s)$}
\label{subsec:fogra}
For integer $s \geq 1$ and $x\in[n]$ we call $\cG_x(s)$ the weighted directed graph spanned by the set of directed paths ${\rm p}$ with at  most $s$ edges, starting at $x$,   and with weight $w({\rm p})\ge e^{- \overline \hh\, s}$. 
We can construct  $\cG_x(s)$, together with a spanning tree $\cT_x(s)$, as follows.
We start at $\cG^0= \cT^0 = x$ and define a process $(\cG^0,\cT^0),  (\cG^1,\cT^1), \dots$, which stops at some random time $\kappa$, and we define $\cG_x(s)= \cG^\kappa$ and $\cT_x(s) = \cT^\kappa$. As in Subsection \ref{sec:seq}, we will add oriented edges one by one, using sequential generation. Initially, $\textrm{Dom}(\sigma_y)=\textrm{Ran}(\sigma_y)=\emptyset$ for all $y\in[n]$. When $j\notin \textrm{Dom}(\sigma_y)$, we interpret $(y,j)$ as a free 
arrow exiting $y$ to be linked to a node $z$ to be chosen uniformly among the vertices $z\in [n]\setminus\textrm{Ran}(\sigma_y)$.
If we are at $(\cG^\ell,\cT^\ell)$, to obtain $(\cG^{\ell+1},\cT^{\ell+1})$ the iterative step is as follows:
\begin{enumerate}[1)]
\item
Consider all nodes $y$ of $\cG^\ell$ together with their free arrows $(y,j)$,  $j \notin \textrm{Dom}(\sigma_y)$. The cumulative weight of such arrows  is  defined as $$\widehat
w(y,j) := w({\rm p})\,p_{y, j},$$
where 
${\rm p}$ is the unique path in $\cT^\ell$ from 
$x$ to $y$.  Pick $(y,j)$ with maximal cumulative weight $\widehat w(y,j)$, among all free arrows  such that: (i) $y$ is at graph distance at most $s-1$ from $x$, and (ii) the cumulative weight satisfies $\widehat w(y,j)\geq e^{-\overline \hh\, s}$.  If this set is empty, then the process stops and we set $\kappa = \ell$. 
\item Extend $\sigma_y$ by setting  $\sigma_y(j)=z$, where $z$ is uniform in $[n]\setminus \textrm{Ran}(\sigma_y)$.
\item Add the weighted directed edge $(y,z)$, with weight $p_{y,j}$,  to the graph $\cG^{\ell}$; add it also to   $\cT^\ell$ if $z$ was not already a vertex of $\cG^\ell$. This defines  $\cT^{\ell+1}$ and $\cG^{\ell+1}$. 
\end{enumerate}

\smallskip
Notice that $\cT_x(s)$ is a spanning tree of $\cG_x(s)$, and that $\cG_x(s)$ can indeed
be identified with the union of all directed paths ${\rm p}$ with at most $s$ edges, starting at $x$, and such that $w({\rm p})\ge e^{- \overline \hh\, s}$. We start our analysis of $\cG_x(s)$ and $\cT_x(s)$ with a deterministic lemma. 

\begin{lemma}
\label{le:kappa}
Fix $x\in[n]$ and $s\in\dN$, and consider the generation process defined above.  The cumulative  weight $\widehat w_\ell$ of the arrow picked at the $\ell$-th iteration of step $1$ satisfies
$$
\widehat w_\ell\leq \frac{s}{\ell}.
$$
In particular, the random time  $\kappa$ satisfies
$$
\kappa \leq s \,e^{\overline \hh\, s}. 
$$
\end{lemma}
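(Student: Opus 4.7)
The plan rests on two observations: (a) the sequence of selected cumulative weights $(\widehat w_\ell)_{1\le\ell\le\kappa}$ is non-increasing in $\ell$; and (b) its total satisfies $\sum_{\ell=1}^{\kappa}\widehat w_\ell\le s$. Together these yield $\ell\,\widehat w_\ell\le \widehat w_1+\cdots+\widehat w_\ell\le s$, i.e.\ $\widehat w_\ell\le s/\ell$. The bound on $\kappa$ is then immediate: as long as the process has not stopped, the criterion in step~1 forces $\widehat w_\kappa\ge e^{-\overline\hh\,s}$, so combining this with $\widehat w_\kappa\le s/\kappa$ gives $\kappa\le s\,e^{\overline\hh\,s}$.

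For (a), I would compare the free arrows available at steps $\ell$ and $\ell+1$. Every free arrow already present at step $\ell$ and not selected there had, by the greedy rule, cumulative weight at most $\widehat w_\ell$, and it survives into step $\ell+1$ with the same cumulative weight since the path in the tree from $x$ to any preexisting vertex is unaffected when $\cT^\ell$ is extended. The only genuinely new free arrows appear at the freshly attached vertex $z=\sigma_y(j)$ (in the case $z\notin\cG^\ell$), where they take the form $(z,k)$ with cumulative weight $w({\rm p}_z)\,p_{z,k}=\widehat w_\ell\,p_{z,k}\le\widehat w_\ell$ since $p_{z,k}\le 1$. Consequently every free arrow available at step $\ell+1$ has cumulative weight at most $\widehat w_\ell$, giving monotonicity.

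For (b), I would re-group the total by the source vertex of each selected arrow. Let $d(y)$ denote the depth of $y$ in $\cT^\kappa$, let ${\rm p}_y$ be the unique path from $x$ to $y$ in $\cT^\kappa$, and set $J_y:=\mathrm{Dom}(\sigma_y)$ at the end of the process. Every selected arrow originates at some $y$ with $d(y)\le s-1$, and clearly $\sum_{j\in J_y}p_{y,j}\le 1$. Hence
\begin{equation*}
\sum_{\ell=1}^{\kappa}\widehat w_\ell
\;=\;\sum_{\substack{y\in V(\cT^\kappa)\\ d(y)\le s-1}} w({\rm p}_y)\sum_{j\in J_y}p_{y,j}
\;\le\;\sum_{d=0}^{s-1}\Biggl(\sum_{y:\,d(y)=d}w({\rm p}_y)\Biggr).
\end{equation*}
The missing ingredient is the flow-type estimate $\sum_{y:\,d(y)=d}w({\rm p}_y)\le 1$ for every $d\ge 0$, which I would prove by a one-line induction on $d$: the depth-$(d+1)$ vertices are the children in $\cT^\kappa$ of depth-$d$ vertices, distinct children of a parent $y'$ correspond to distinct indices in $J_{y'}$, so the children of $y'$ contribute at most $w({\rm p}_{y'})\sum_{j\in J_{y'}}p_{y',j}\le w({\rm p}_{y'})$. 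Summing over the $s$ levels $d=0,\ldots,s-1$ yields~(b).

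I do not anticipate a real obstacle: the statement is a clean greedy/combinatorial argument with no probability content. The only point that requires care is the bookkeeping of \emph{new} versus \emph{inherited} free arrows in (a); once this is clear, part~(b) is a level-by-level application of the trivial stochastic bound $\sum_j p_{y,j}\le 1$ in the generated tree.
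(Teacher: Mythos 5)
Your proof is correct, but it takes a route that differs from the paper's in the key counting step. Both you and the paper begin with the monotonicity $\widehat w_1\geq\widehat w_2\geq\cdots$, obtained exactly as you describe (new arrows from a fresh vertex $z$ inherit a factor $p_{z,k}\leq 1$, and eligibility of inherited arrows is static, so the greedy selection stays non-increasing). The divergence comes next. The paper introduces the auxiliary tree $\tilde\cT^\ell$ with fictitious leaves, lets $F$ be its leaf set, and argues in two steps: (1) the weights of the root-to-leaf paths sum to at most $1$ and each is at least $\widehat w_\ell$ by monotonicity, hence $|F|\,\widehat w_\ell\leq 1$; (2) the $\ell$ edges of $\tilde\cT^\ell$ are covered by $|F|$ root-to-leaf paths of length at most $s$, so $\ell\leq s|F|$. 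Combining gives $\ell\,\widehat w_\ell\leq s$. You instead bound the full partial sum $\sum_{u\leq\ell}\widehat w_u\leq s$ directly, by grouping selected arrows by source vertex and invoking the level-by-level flow bound $\sum_{y:\,d(y)=d}w({\rm p}_y)\leq 1$ over the $s$ relevant depths, then conclude via $\ell\,\widehat w_\ell\leq\sum_{u\leq\ell}\widehat w_u$. Your version avoids the fictitious-leaf bookkeeping and does not need to reason about the leaf set $F$ at all; what the paper's version buys is that it only invokes the flow fact at a single level (the leaf frontier), which is a slightly shorter statement, at the cost of the auxiliary tree construction. Both are sound; yours is a legitimate and arguably cleaner alternative.

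One small point of rigor worth flagging: in your part (a) you state that ``every free arrow available at step $\ell+1$ has cumulative weight at most $\widehat w_\ell$,'' which is not literally true for arrows that were ineligible (e.g.\ violating the distance or weight threshold) at step $\ell$. It should read ``every \emph{eligible} free arrow.'' This is harmless because eligibility is a static property once the source vertex is in the tree — the distance to the root and the cumulative weight $\widehat w(y,j)$ are fixed from the moment $y$ is attached — so ineligible arrows can never become candidates later, and the greedy comparison only needs to range over eligible ones. Worth making explicit, but the argument stands.
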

\begin{proof}
Consider the following new tree, say $\tilde \cT ^\ell$, obtained as $\cT^\ell$ in the above process except that at step $3$ if $z$ has already been seen, we create anyway a new fictitious leaf node. Then both $\cG^\ell $ and $\tilde \cT^\ell$ have exactly $\ell$ edges.  Let $F$ denote the set of all leaf nodes
$\tilde \cT^\ell$. Thus $F$ consists of all leaf nodes of $\cT^\ell$ plus all the fictitious leaf nodes introduced above. 
By construction: 
\begin{equation}\label{eq:leaves}
\sum_{{\rm p}:\,x\mapsto F} w({\rm p}) \leq 1, 
\end{equation}
where the sum runs over all directed paths in $\tilde\cT^\ell$ from the root $x$ to a leaf node in $F$. 
 Note also that the chosen cumulative weights at step $1$ for $\ell = 1, 2, \ldots$ are non-increasing: $\widehat w_{\ell-1} \geq \widehat w_\ell$. Hence, any ${\rm p}$ from the sum in \eqref{eq:leaves} satisfies  $w({\rm p}) \geq \widehat w_\ell$. Since there is a unique path ${\rm p}$ for each leaf node in $F$,  
 it follows from \eqref{eq:leaves} that $|F|\widehat w_\ell\leq 1$. Each path ${\rm p}$ has length at most $s$, and their union spans $\tilde \cT^\ell$. Since there are a total of $\ell$ edges one must have $\ell\leq s|F|$. Therefore $\ell \leq s / \widehat w_\ell$ as desired. For the second statement, we use that for $\ell = \kappa$, $\widehat w_\ell\geq e^{-\overline \hh\, s}$. 
\end{proof}

Let as usual $\tx(\cG_x(s)):=1+|E|-|V|$ denote the tree excess of the directed graph $  \cG_x(s)$, where $E$ is the set of edges and $V$ is the set of vertices of $  \cG_x(s)$. Note that $|E|=\kappa$, that $\tx(\cG_x(s))=0$ iff $\cG_x(s)=\cT_x(s)$, and that $\tx(\cG_x(s))\leq 1$ iff $  \cG_x(s)$ is a directed tree except for at most one extra edge. Remark also that if $s\le (1-\veps)\ts$, then the number of vertices in $\cG_x(s)$ satisfies 
$|V|= o(n).$ 
Indeed, there are at most $\kappa+1$ vertices, and by Lemma \ref{le:kappa}, $\kappa\leq \ts e^{\overline \hh(1-\veps) \ts} = \cO(n^{1-\veps^2}\log n)$. 

\begin{lemma}\label{bplus}
Denote by $S_0$ the set of all $x\in[n]$ such that $\tx( \cG_x(2h))\leq 1$, where $h$ is defined in \eqref{eq:choiceh}. Then with high probability $S_0=[n]$, that is $\PP(S_0=[n])=1-o(1)$. 
\end{lemma}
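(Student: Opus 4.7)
The plan is to analyze, for each fixed starting vertex $x \in [n]$, the sequential generation of $\cG_x(2h)$ described above, treating the creation of each extra edge (one beyond a spanning tree) as a rare event, very much in the spirit of Lemma \ref{lm:tx}. Since the process appends $\kappa$ oriented edges in total, the tree excess can only grow when a freshly generated endpoint $z$ happens to land inside the current vertex set $V(\cG^\ell)$. I first bound $\kappa$ deterministically using Lemma \ref{le:kappa}, then control the collision probability per step, and finally apply a union bound over $x \in [n]$.

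First, plugging $s = 2h$ into Lemma \ref{le:kappa}, and using $h = \lfloor \ts/10 \rfloor$, $\overline{\hh} = (1+\veps)\hh$ and $\hh\, \ts = \log n$, one gets $2\overline{\hh}\, h \le (1+\veps)(\log n)/5$, and hence
\[
\kappa \,\le\, 2h\, e^{2\overline{\hh} h} \,\le\, (2h)\, n^{(1+\veps)/5} \,=\, \cO\bigl(n^{(1+\veps)/5} \log n\bigr).
\]
Next, at every iteration of the sequential construction, step 2) extends $\sigma_y$ by setting $\sigma_y(j) = z$ with $z$ uniform in $[n] \setminus \textrm{Ran}(\sigma_y)$. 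The graph $\cG^{\ell+1}$ has exactly one more edge than $\cG^\ell$, but the same vertex set unless $z \notin V(\cG^\ell)$, so the tree excess strictly increases precisely when $z \in V(\cG^\ell)$. Using $|\textrm{Ran}(\sigma_y)| \le \kappa$ and $|V(\cG^\ell)| \le \kappa + 1$, conditionally on the past,
\[
\PP\bigl(\tx(\cG^{\ell+1}) > \tx(\cG^\ell) \,\big|\, \cF_\ell\bigr) \,\le\, \frac{\kappa + 1}{n - \kappa}.
\]
The total number of such increases is therefore stochastically dominated by a $\mathrm{Bin}\bigl(\kappa, \tfrac{\kappa+1}{n-\kappa}\bigr)$ variable, so by a union bound over pairs of steps,
\[
\PP\bigl(\tx(\cG_x(2h)) \ge 2\bigr) \,\le\, \binom{\kappa}{2} \left(\frac{\kappa+1}{n-\kappa}\right)^{\!2} \,=\, \cO\!\left(\frac{\kappa^4}{n^2}\right).
\]

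Finally, since $\veps < 1/20 < 1/4$, the exponent $4(1+\veps)/5 < 1$, so substituting the bound on $\kappa$ gives $\PP(x \notin S_0) = \cO\bigl(n^{4(1+\veps)/5 - 2}(\log n)^4\bigr) = o(1/n)$, and a union bound over the $n$ choices of $x$ concludes the proof. The main obstacle --- really the only non-mechanical point --- is matching the constants: the deterministic bound on $\kappa$ grows polynomially like $n^{(1+\veps)/5}$, and for the final union bound to close one must have $\kappa^4 = o(n)$; this is exactly what the choice $h = \lfloor \ts/10 \rfloor$ together with the restriction $\veps < 1/20$ secure, explaining why both constants appear explicitly in \eqref{eq:choiceh}.
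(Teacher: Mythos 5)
Your proof is correct and follows essentially the same line as the paper's: bound $\kappa$ deterministically by Lemma \ref{le:kappa}, observe that the tree excess is stochastically dominated by a Binomial with a small per-step collision probability, bound $\PP(\tx \geq 2)$ by (roughly) $\kappa^4/n^2 = o(1/n)$, and union-bound over $x\in[n]$. The only cosmetic differences are that you carry the slightly tighter denominator $n-\kappa$ (the paper informally writes $n$, which is asymptotically the same since $\kappa = o(n)$), and you make explicit that the condition actually needed here is $\e < 1/4$, of which the paper's $\e < 1/20$ is a sufficient instance.
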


\begin{proof}
We can use the same argument as in the proof of Lemma \ref{lm:tx}. Consider the  stage $(\cG^\ell,\cT^\ell)\mapsto (\cG^{\ell+1},\cT^{\ell+1})$ of the above sequential generation process. The conditional chance, given the past  stages, that the vertex $z$ in step $3$  is already a vertex of $\cG^{\ell}$ is at most $(\ell +1) / n$. Hence, if $m = \lceil s e^{\overline \hh\, s} \rceil$, from Lemma \ref{le:kappa}, the tree excess of $\cG_x(s)$ is stochastically upper bounded by Binomial$(m, (m+1 )/n)$. As in \eqref{bound:tx}, the probability that the tree excess is larger than $1$ is bounded by 
$$
\frac 1 2  \left(\frac{m(m+1)}{n}\right)^2.
$$
For $s = 2 h$, the later is $o(1/n)$ since $m^4 = o (n)$ which follows from $4 \overline \hh  2 h < (84/100) \log n $ (since $\e < 1/20$). 
\end{proof}

\subsection{Nice trajectories}\label{nice}
We will first show that for most starting states $x \in [n]$, it is likely that the walker spends its first $ (1 - \veps) \ts$ steps in  $\cT_x$ (Lemma \ref{tx1}) and does not come back to it for a long time (Lemma \ref{tx2}). We start by identifying 
these good starting points $x$. 

\begin{lemma}[Good states]\label{le:Sstar}
Let  $S_\star$ be the set of all $x \in[n]$ such that $\tx(\cG_x(h))=0$.  For any $s=\Theta(\log n)$, the event $\{ X_s \notin S_\star\}$ is uniformly unlikely.
\end{lemma}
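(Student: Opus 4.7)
The plan is to apply the higher-moment bound \eqref{higher-moment} at order $k=2$, reducing the claim to showing
\begin{equation*}
\PP\bigl(X_s^1 \notin S_\star,\ X_s^2 \notin S_\star\bigr) \;=\; o(1/n),
\end{equation*}
where $X^1, X^2$ denote two $P_\sigma$-Markov chains sharing the same environment $\sigma$ and a common uniformly-chosen starting state $\cI\in[n]$. By Lemma~\ref{le:kappa} and the choice of $h$ and $\overline{\hh}$ in \eqref{eq:choiceh}, the forward graph $\cG_x(h)$ has at most $\kappa := \lceil h\,e^{\overline{\hh}\,h}\rceil$ edges; since $\overline{\hh}\,h \le (1+\e)(\log n)/10$ and $\e<1/20$, we have $\kappa \le n^{1/10+o(1)}$, so in particular $\kappa^4/n^2 = o(1/n)$.

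To bound the joint probability, I would sequentially generate, in order, $(X^1_t)_{t\le s}$, then $\cG_{X_s^1}(h)$, then $(X^2_t)_{t\le s}$, and finally $\cG_{X_s^2}(h)$, revealing the environment $\sigma$ step by step as in Subsection~\ref{sec:seq} and Subsection~\ref{subsec:fogra}. The event $\{X_s^\ell\notin S_\star\}$ is equivalent to the existence of a collision (a revealed target coinciding with a vertex already in the current forward-graph vertex set) during the sequential construction of $\cG_{X_s^\ell}(h)$. Let $G$ be the combined graph of both trajectories and both forward graphs: it has at most $2(s+\kappa)=O(\kappa)$ edges.

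Decomposing according to the tree excess of $G$,
\begin{equation*}
\PP(X_s^1\notin S_\star,\,X_s^2\notin S_\star) \;\le\; \PP(\tx(G)\ge 2) \;+\; \PP\bigl(\tx(G)\le 1,\ X_s^1\notin S_\star,\ X_s^2\notin S_\star\bigr).
\end{equation*}
The first term is handled exactly as in Lemma~\ref{lm:tx}: during the sequential generation of $O(\kappa)$ edges, each step creates a new cycle with probability $O(\kappa/n)$, so that $\PP(\tx(G)\ge 2)\le O(\kappa^4/n^2)=o(1/n)$. For the second term, if $\tx(G)\le 1$ while both forward graphs contain a cycle, then the unique cycle of $G$ must be shared by $\cG_{X_s^1}(h)$ and $\cG_{X_s^2}(h)$; in particular, its vertex set $V_c$ (of size at most $\kappa$) must be included in $V(\cG_{X_s^2}(h))$. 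Conditioning on the construction of $\cG_{X_s^1}(h)$, whose probability of producing a cycle is $O(\kappa^2/n)$, and using the bound $\PP(v\in V(\cG_{X_s^2}(h)))=O(\kappa^2/n)$ for any fixed $v$---obtained by coupling the sequential construction with i.i.d.\ uniform sampling at total-variation cost $O(\kappa^2/n)$, valid since the whole construction touches only $O(\kappa)\ll\sqrt n$ edges---yields a bound of $O(\kappa^2/n)\cdot O(\kappa^2/n)=o(1/n)$ for the second term as well.

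Combining the two terms and inserting into \eqref{higher-moment} gives $\PP(\max_i Q_i(X_s\notin S_\star)>\delta)\le (n/\delta^2)\cdot o(1/n)=o(1)$ for any fixed $\delta>0$, which is the desired uniform unlikeliness. The main obstacle is the ``shared-cycle'' case in the second term: one must control the probability that the forward exploration from the (random) state $X_s^2$ reaches the cycle constructed in $\cG_{X_s^1}(h)$. This relies on the coupling with i.i.d.\ sampling, which crucially requires the total size $O(\kappa)$ of the combined sequentially-generated graph to be $n^{o(1)}\ll\sqrt n$---a property ensured by the choice $h=\lfloor\ts/10\rfloor$ being suitably small compared to $\ts$.
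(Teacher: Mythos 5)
Your proposal takes a genuinely different route from the paper: you apply the second-moment bound (\ref{higher-moment}) with $k=2$ and then decompose according to the tree excess of the combined explored graph, whereas the paper first reduces to $s\le h$ via the time-shift (\ref{rk:timeshift}), invokes Lemma~\ref{bplus} to assume $\tx(\cG_x(2h))\le 1$ for every $x$, shows deterministically that the set $\cP$ of length-$s$ paths in $\cG_x(2h)$ from $x$ landing outside $S_\star$ has cardinality at most one, and finally bounds $Q_x(\cP)\le e^{-\underline\hh\,s}$ using the weight concentration from Theorem~\ref{pr:weight}. Unfortunately, your argument has a genuine gap precisely because it never uses that weight estimate.

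The problem is the claimed bound $\PP\bigl(v\in V(\cG_{X_s^2}(h))\bigr)=O(\kappa^2/n)$, and consequently the $o(1/n)$ estimate for the ``shared cycle'' term. The two walks live in the \emph{same} environment and start from the \emph{same} state $\cI$, so the second trajectory can retrace the first one without exposing any fresh randomness; the i.i.d.\ coupling only controls freshly drawn targets. Concretely, conditionally on trajectory 1 and the environment, the event $\{X_s^2=X_s^1\}$ (which forces $\cG_{X_s^2}(h)=\cG_{X_s^1}(h)$, hence $v\in V(\cG_{X_s^2}(h))$ deterministically) has probability exactly $\rho^1(s)$, the weight of trajectory 1. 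By Theorem~\ref{pr:weight} this is typically of order $e^{-\hh\,s}$, which for $s\le h$ is far larger than $\kappa^2/n$. Take the random $r$-out digraph with fixed $r$ and $s=\lfloor c\,\ts\rfloor$ with $c$ small but fixed (still $s=\Theta(\log n)$): then $\rho(s)=r^{-s}=n^{-c+o(1)}$ deterministically, while $\PP(X_s^1\notin S_\star)=\Theta(\kappa^2/n)=n^{-0.79+o(1)}$, and the retracing event keeps $\tx(G)\le 1$, so
\begin{equation*}
\PP\bigl(X_s^1\notin S_\star,\ X_s^2\notin S_\star\bigr)\ \ge\ \Omega\!\left(n^{-c}\cdot\frac{\kappa^2}{n}\right)\ =\ n^{-c-0.79+o(1)},
\end{equation*}
which is $\gg 1/n$ whenever $c<0.21$. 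Hence the second-moment bound $(n/\delta^2)\,\PP(X_s^1\notin S_\star,X_s^2\notin S_\star)$ diverges, and the argument breaks. (Going to $k$-th moments with $k=\Theta(\log n)$ as in (\ref{def:k}) could in principle defeat the retracing probability $e^{-(k-1)\hh\,s}$, but then one would have to bring in the weight concentration to control the conditional steps $\PP(B_\ell\,|\,B_{\ell-1})$, and the proof becomes a different argument altogether.) The paper's trick to avoid this entirely is the deterministic claim $|\cP|\le 1$: it converts ``not landing in $S_\star$'' into the event of following one specific path, whose quenched probability is then killed by the path-weight bound $\rho(s)\le e^{-\underline\hh\,s}$. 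That weight estimate is exactly the ingredient your proof is missing.

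A minor secondary point: your bound $\kappa\le n^{1/10+o(1)}$ is slightly off, since $\overline\hh\,h=(1+\varepsilon)(\log n)/10$ gives $\kappa\le n^{(1+\varepsilon)/10+o(1)}$ which exceeds $n^{1/10}$; this does not affect $\kappa^4/n^2=o(1/n)$ but the intermediate statement should be corrected.
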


\begin{proof}
In view of (\ref{rk:timeshift}), it is sufficient to prove the claim for $s \leq h$ and $s = \Theta( \log n)$. By Lemma \ref{bplus}, we may further assume that $S_0 = [n]$. Consider the trajectory $(X_0 , \ldots, X_s)$ started at $X_0 = x$. The event that $X_s \notin S_\star$ is contained in the union of the events $A = \{\rho(s) \notin [ e^{-\overline \hh\, s} , e^{-\underline \hh\, s}] \}$ and $A^c  \cap B$ where $ B= \{ (X_0, \ldots X_s ) \in \cP \}$ and $\cP$ is the set of paths starting from $x$  of length $s$ in $\cG_x(2h)$, whose end point is not in $S_\star$.  We claim that $\cP$ has cardinality at most $1$. Assuming this claim, we get  $Q_x ( A^c \cap B) \leq  e^{ - \underline \hh\, s} = o(1)$. Finally, Theorem \ref{pr:weight} asserts that $A$ is uniformly unlikely.

It remains to check the claim. Observe that if $y$ is a vertex of $\cG_x(2h)$ at distance $s \leq h$ from $x$, then any directed edge of $\cG_y (h)$ is also a directed edge of $\cG_x(2h)$. Besides, since $S_0 = [n]$, $\cG _x(2h)$ is a directed tree except for at most one directed edge. If $\cG _x(2h)$ is a directed tree, then obviously $\cP$ is empty from the above observation. 
Now, assume $\cG _x(2h)$ has a no directed cycle, but only one non-directed cycle. Let $y$ be the closest node to $x$ on this cycle ($y$ is unique since there is only one cycle) and let $(x_0 , \ldots, x_u)$, $x_0 = x$, $x_u = y$ be the unique path from $x$ to $y$ in $\cG _x(2h)$. If $s \leq u$, then $\cP  \subset \{ (x_0, \ldots, x_s) \}$, and thus $|\cP|\leq 1$. If $s > u$, then $\cP$ is empty since no forward neighbor $z$ of $y$ at distance $s$ from $x$ can have a cycle in $\cG_z(h)$ (the contrary would create a new cycle since $\cG _x(2h)$ is  no directed cycle). Finally, assume that $\cG _x(2h)$ has a unique directed cycle and let $y$ and $(x_1, \ldots, x_u)$ be as above. If $s \leq u$, then $\cP  \subset \{ (x_0, \ldots, x_s) \}$, and thus $|\cP|\leq 1$. If $s > u$, the only path in $\cP$, if any,  is the path which reaches $y$ (in $u$ steps) and then loop inside the directed cycle during $s-u$ steps.
\end{proof}


The next corollary implies that it is enough to check that the upper bound \eqref{eq:goalnu} holds uniformly over  $ S_\star$ rather than over all of $[n]$.  
\begin{corollary}\label{cor:Sstar}
For all integers $u \geq s = \Theta( \log n)$, for any probability $\nu$ on $[n]$: 
$$
\max_{x \in [n]} \| P^u  (x , \cdot ) - \nu \|_{{\textsc{tv}}} \leq  \max_{x \in S_\star} \| P^{u-s}  (x , \cdot ) - \nu \|_{{\textsc{tv}}}   + o_{\bf P}(1),
$$
where $o_{\bf P}(1)$ denotes a random variable that converges to zero in probability, as $n\to\infty$. 
\end{corollary}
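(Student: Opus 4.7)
The plan is to use the Markov property to split a walk of length $u$ into a first leg of length $s$ followed by a second leg of length $u-s$, and then invoke Lemma~\ref{le:Sstar} to argue that after the first leg the walker is almost surely in the good set $S_\star$.

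Concretely, for any $x\in[n]$ I would write
\begin{equation*}
P^u(x,\cdot) \;=\; \sum_{z\in[n]} P^s(x,z)\, P^{u-s}(z,\cdot),
\end{equation*}
which together with the convexity of the total variation distance yields
\begin{equation*}
\|P^u(x,\cdot)-\nu\|_{\textsc{tv}} \;\le\; \sum_{z\in[n]} P^s(x,z)\,\|P^{u-s}(z,\cdot)-\nu\|_{\textsc{tv}}.
\end{equation*}
Splitting the sum into $z\in S_\star$ and $z\notin S_\star$, and crudely bounding the total variation distance by $1$ in the latter case, I get
\begin{equation*}
\|P^u(x,\cdot)-\nu\|_{\textsc{tv}} \;\le\; \max_{z\in S_\star}\|P^{u-s}(z,\cdot)-\nu\|_{\textsc{tv}} \;+\; Q_x\!\left(X_s\notin S_\star\right).
\end{equation*}

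Taking the maximum over $x\in[n]$ and then bounding the error term uniformly, this becomes
\begin{equation*}
\max_{x\in[n]}\|P^u(x,\cdot)-\nu\|_{\textsc{tv}} \;\le\; \max_{z\in S_\star}\|P^{u-s}(z,\cdot)-\nu\|_{\textsc{tv}} \;+\; \max_{x\in[n]}Q_x\!\left(X_s\notin S_\star\right).
\end{equation*}
Since $s=\Theta(\log n)$, Lemma~\ref{le:Sstar} asserts that $\{X_s\notin S_\star\}$ is uniformly unlikely, i.e.\ the last term converges to zero in probability. This gives the desired bound with an $o_{\bfP}(1)$ remainder.

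No genuine obstacle is expected here: the only non-trivial input is the uniform unlikeliness of $\{X_s\notin S_\star\}$, which is already established in Lemma~\ref{le:Sstar}, and the rest is the standard one-step (or $s$-step) coupling/convexity argument for total variation.
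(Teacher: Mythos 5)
Your proposal is correct and follows the same route as the paper: the displayed inequality $\|P^u(x,\cdot)-\nu\|_{\textsc{tv}} \le \max_{z\in S_\star}\|P^{u-s}(z,\cdot)-\nu\|_{\textsc{tv}} + Q_x(X_s\notin S_\star)$ is exactly the paper's one-line bound, and you close the argument with Lemma~\ref{le:Sstar} just as the paper does. The only difference is that you spell out the Chapman--Kolmogorov/convexity step that the paper leaves implicit.
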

\begin{proof}
Notice that 
$$\| P^u  (x , \cdot ) - \nu \|_{{\textsc{tv}}}\leq Q_x(X_s\notin S_\star ) + \max_{y \in S_\star} \| P^{u-s}  (y , \cdot ) - \nu \|_{{\textsc{tv}}}.$$
Taking maximum over $x\in[n]$ and using Lemma \ref{le:Sstar} concludes the proof.
\end{proof}

If ${\rm p}=(x_0,\dots,x_s)$ is a path with $x_0 =x$, we write that ${\rm p} \in \cG_x (s)$ (or ${\rm p} \in \cT_x (s)$) if for any $0 \leq u < s$, $(x_u,x_{u+1})$ is a directed edge of $\cG_x (s)$ (or $ \cT_x (s)$).
Theorem \ref{pr:weight}  implies that the trajectory $(X_0,\ldots, X_s)$ started at $x$  is likely to remain in $\cG_x(s)$ for a long time. We now prove that it is also likely that the trajectory stays  in $\cT_x(s)$  if $x \in S_\star$ and $s$ is not too large. 
\begin{lemma}\label{tx1}
If  $ \e \,  \ts \leq  s \leq (1 - \e) \, \ts $, then 
\begin{equation}\label{eqtx1}
\max_{x\in S_\star}\,Q_x((X_0,\dots,X_s)\notin\cT_{x}(s))\, \xrightarrow[n\to\infty]{\bfP}  \,0.
\end{equation}
\end{lemma}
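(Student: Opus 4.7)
Plan. The strategy is to show that on a high-probability event the trajectory lies inside $\cG_x(s)$, and then to argue that failure to lie in $\cT_x(s)$ would require using a non-tree edge of $\cG_x(s)$, whose total effective mass along the walk is negligible.

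I would first apply Theorem \ref{pr:weight} with $\e/2$ in place of $\e$: the event $A_1 := \{\rho(s) < e^{-(1+\e/2)\hh s}\}$ is uniformly unlikely. On $A_1^c$ we have $\rho(s) > e^{-\overline\hh s}$ (since $\overline\hh = (1+\e)\hh$), and because edge weights lie in $[0,1]$, every prefix weight $\rho(u)$ with $u\leq s$ is at least $\rho(s) > e^{-\overline\hh s}$. By the definition of $\cG_x(s)$, every edge of the trajectory then belongs to $\cG_x(s)$, and so the trajectory fails to lie in $\cT_x(s)$ only if it uses at least one non-tree edge of $\cG_x(s)$.

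To estimate the escape probability, I would interleave the sequential generation of the walker (as in Section \ref{sec:seq}) with the construction of $\cG_x(s)$ from subsection \ref{subsec:fogra}, working on the high-probability event $\{S_0=[n]\}$ of Lemma \ref{bplus}. At each walker step from $y=X_u$, the index $j$ is drawn with probability $p_{y,j}$; the next vertex $\sigma_y(j)$ is then either a tree child of $y$, an already revealed non-tree neighbour, or a fresh uniform sample in $[n]\setminus \mathrm{Ran}(\sigma_y)$. The last two scenarios are the only escape mechanisms. For a fresh sample, uniformity implies that the probability of landing in the current vertex set of $\cG_x(s)$ is at most $|V(\cG_x(s))|/(n-|\mathrm{Ran}(\sigma_y)|)$, which by Lemma \ref{le:kappa} is $O(\ts\,n^{-\e^2})$; a collision argument of the type used in the proof of Lemma \ref{bplus} yields the same order of magnitude for the total $p_{y,\cdot}$-mass of already-revealed non-tree out-edges at $y$. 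Summing over $s=O(\log n)$ walker steps gives an expected escape count of order $\ts^2\,n^{-\e^2}=o(1)$, and by Markov's inequality this yields the pointwise-in-$x$ bound $Q_x(\text{escape})\to 0$ in probability.

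To upgrade this pointwise bound to the required uniformity over $x\in S_\star$, I would apply the higher-moment method \eqref{higher-moment}-\eqref{def:k} from the proof of Theorem \ref{pr:weight}, running $k=\Theta(\log n)$ independent walkers started at the same vertex in the same environment. Their joint graph has tree excess at most one with the required probability, so the joint escape probability factorizes up to a negligible error and the $\delta^{-k}$ factor in \eqref{higher-moment} is absorbed by the $n^{-\e^2}$ improvement from the single-walker bound. The main obstacle is the rigorous implementation of the interleaved coupling: the construction of $\cG_x(s)$ reveals $\sigma$ in decreasing order of cumulative weight, whereas the walker reveals it chronologically, so one must carefully rearrange these revelations and exploit the exchangeability of each uniform permutation $\sigma_y$; controlling the total $p_{y,\cdot}$-mass of already-revealed non-tree out-edges at $y$ is the technically most delicate point.
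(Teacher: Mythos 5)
Your first step is correct and matches the paper: by Theorem~\ref{pr:weight}, on a uniformly likely event $\rho(s)\geq e^{-\overline\hh s}$, so every prefix of the trajectory has weight above the threshold and the trajectory lies in $\cG_x(s)$. The remaining task is to rule out the use of a non-tree edge of $\cG_x(s)$. But the step-by-step bound you propose next has a genuine gap, and it is precisely the point you flag as ``technically most delicate.''

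Your argument needs the statement: \emph{for every vertex $y$ visited by the walker, the total $p_{y,\cdot}$-mass of non-tree out-arrows at $y$ is $O(\ts\, n^{-\e^2})$.} This is false in general. Consider a vertex $y$ with $p_{y,1}=1/2$. During the construction of $\cG_x(s)$, the arrow $(y,1)$ is revealed whenever its cumulative weight $\widehat w(y,1) = w({\rm p}_{xy})\cdot\tfrac12$ exceeds $e^{-\overline\hh s}$, and it becomes a non-tree edge with probability $\Theta(\ts n^{-\e^2})$. On that event, the $p_{y,\cdot}$-mass of non-tree out-arrows at $y$ is already $\geq 1/2$, which is $\Omega(1)$. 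So your per-step escape probability can be of constant order. What actually saves the argument is that reaching such a vertex $y$ is correspondingly unlikely — the probability of reaching $y$ along the tree is $w({\rm p}_{xy})$, and the correct quantity to control is the \emph{cumulative weight} $\widehat w(y,j) = w({\rm p}_{xy})\,p_{y,j}$, not the local mass $p_{y,j}$. The paper bounds the conditional (on $\cG_x(s)$) escape probability by $M_\kappa := \sum_{\text{non-tree arrows}} \widehat w$, and then shows $\dP(M_\kappa - M_h \geq \delta)=o(1/n)$ by a martingale concentration argument (Freedman's inequality), where the key input is that the conditional increments of $M_\ell$ have expectation $\widehat w_\ell\, |\cG^\ell|/(n-|\mathrm{Ran}(\sigma_{y_\ell})|)=O(\log n/n)$ and there are at most $O(\ts n^{1-\e^2})$ of them. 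This quenched bound plus the first-moment method \eqref{first-moment} then gives the required uniformity over $x\in S_\star$, without any need for the higher-moment trick with $k$ walkers.

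Two smaller remarks. First, the ``fresh uniform sample'' escape mechanism you list is redundant: if the walker has followed the tree up to step $u$ and picks $j$ with $\rho(u)p_{y,j} < e^{-\overline\hh s}$, then $\rho(s)<e^{-\overline\hh s}$ and we are already in the event excluded by Theorem~\ref{pr:weight}; if $\rho(u)p_{y,j}\geq e^{-\overline\hh s}$, the arrow $(y,j)$ was necessarily revealed during the construction, so there is no fresh draw. Second, the fact that $M_h=0$ for $x\in S_\star$ is crucial (it is what makes the martingale start from zero at time $h$), and you should also truncate the increments $\widehat w_\ell$ at $\delta/2$ (justified by Lemma~\ref{lm:high}) before applying Freedman's inequality, since a single revealed arrow of constant mass could otherwise dominate.
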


\begin{proof}
By construction, there are only two ways that the trajectory exits $\cT_x (s)$: either (i) the weight of the trajectory $\rho(s)$ is below $e^{- \overline \hh\, s}$ or (ii) $(X_0,\cdots , X_s)$ has used an edge in $\cG_x(s) \backslash \cT_x(s)$, that is, there exists $1 \leq u \leq s$ such that $(X_{u-1}, X_u) \in \cG_x(s) \backslash \cT_x(s)$. The event depicted in (i) is uniformly unlikely by Theorem \ref{pr:weight}. We should thus treat the event (ii).

We may follow the argument of \cite[Proposition 12]{bordenave2015random}. Fix $x \in [n]$, and consider the sequential generation process $(\cG^{0}, \cT^{0}), (\cG^{1}, \cT^{1}) , \ldots$  defined above. Define a new process $(M_\ell)_{\ell \geq 0}$ by $M_0 = 0$ and 
$$
M_{\ell+1} = M_\ell + {\bf 1}_{\{\ell < \kappa\}} {\bf 1}_{\{z_\ell \in \cG^ {\ell}\}} \,\widehat w_\ell, 
$$
where $\widehat w_\ell = \widehat w(y_\ell,j_\ell)$ is the cumulative  weight of the arrow $(y_\ell,j_\ell)$ picked in step $1$ and $z_\ell=\sigma_{y_\ell}(j_\ell)$ is the vertex picked in step 2. In words: $M_\ell$ is the sum of cumulative weights of all arrows that are linked to   in $\cG^{\ell} \backslash \cT^{\ell}$. In particular the probability of the scenario described in point (ii) above is bounded above by $M_\kappa$. Thus, to conclude the proof of Lemma \ref{tx1}, it is sufficient to prove that for any fixed $\delta > 0$, $M_\kappa \geq \delta$ is unlikely, uniformly over $x \in S_\star$. By construction, $M_h = 0$ for $x \in S_\star$, hence it is sufficient to prove that $M_{\kappa} - M_h \geq \delta$ is uniformly unlikely.  Note that we may further assume that 
\begin{eqnarray}
\label{extra}
  \widehat{w}_\ell  \le  \frac {\delta}2\,,\qquad \forall \ell\ge h,
\end{eqnarray}
since the complementary event entails the existence of a path of length $h=\Theta(\log n)$ and weight at least $\frac\delta 2=\Omega(1)$ starting at $x$, which is uniformly unlikely by Lemma \ref{lm:high}. In other words, we may safely replace $\widehat{w}_\ell$ with  $\widehat{w}_\ell\wedge \frac \delta 2$ in the definition of $M$, for all $\ell\ge h$. For this modified definition of $M$, this ensures that
\begin{eqnarray}\label{eq:Mellbo}
0 \ \leq \, M_{\ell+1} - M_\ell \, \leq \ \frac{\delta}{2}.
\end{eqnarray}
for all $\ell\ge h$.  We then claim that  for $h$ as in \eqref{eq:choiceh} and any fixed $\delta >0$, uniformly in $x \in [n]$,
\begin{equation}\label{eq:boundM}
\dP ( M_{\kappa} \geq M_{h} + \delta) = o \PAR{ \frac 1 n }.
\end{equation}
Once we have \eqref{eq:boundM}, the conclusion follows from the first moment argument in \eqref{first-moment}.

To prove \eqref{eq:boundM}, we are going to use a martingale version of Bennett's inequality from \cite{freedman1975tail}. 
Let $\cF_\ell$ be the natural filtration associated to the process $(\cG^{0}, \cT^{0}), (\cG^{1}, \cT^{1}),\dots$.  If $|\cG^\ell| $ is the number of nodes in $\cG^\ell$, then
\begin{eqnarray*}
\dE \SBRA{ M_{\ell + 1} - M_\ell   \Bigm| \cF_\ell  } \,=  \, {\bf 1}_{\{\ell < \kappa\}} \, \frac{\widehat w_\ell  \,|\cG^\ell| }{n - | \textrm{Ran}(\sigma_{y_\ell})|} ,\\
\dE \SBRA{ (M_{\ell + 1} - M_\ell  )^ 2   \Bigm| \cF_\ell  } \, = \, {\bf 1}_{\{\ell < \kappa\}} \, \frac{\widehat w_\ell^2\,  |\cG^\ell| }{n - | \textrm{Ran}(\sigma_{y_\ell})|}.
\end{eqnarray*}
Recall that $|\textrm{Ran}(\sigma_{y_\ell})| \leq \ell$,  $|\cG^\ell| \leq \ell+1$. Moreover,  by Lemma \ref{le:kappa},  $\widehat w_\ell  \leq s / \ell$, and $\kappa \leq s e^{\overline \hh\, s} \leq \ts n^{1 - \e^2}$. 
It follows that $\widehat w_\ell  \,|\cG^\ell|=\cO(\log n)$, $\sum_{\ell \geq h}\widehat w_\ell =\cO((\log n)^2)$. Therefore,
\begin{eqnarray*}
&a:=\sum_{\ell \geq h}\dE  \SBRA{ M_{\ell + 1} - M_\ell   \Bigm| \cF_\ell  }  = \cO\PAR{   (\log n)^2  n^{-\e^2} },  \\
&b:=\sum_{\ell\geq h} \dE \SBRA{ (M_{\ell + 1} - M_\ell  )^ 2   \Bigm| \cF_\ell  }  = \cO \PAR{  (\log n)^3  n^{-1} } .
\end{eqnarray*}
Next, define 
\begin{equation*}
Z_{\ell+1} = \frac2\d\left(M_{\ell+1} - M_{\ell} - \dE \SBRA{ M_{\ell+1} - M_{\ell}   \Bigm| \cF_\ell  }\right)\,,\qquad \ell\geq h.
\end{equation*}
Thus, $\dE[Z_{\ell+1}|\cF_{\ell}]=0$ and \eqref{eq:Mellbo} implies $|Z_{\ell+1}|\leq 1$, for all $\ell\geq h$. Consider the martingale $\{\phi_u,\,u\geq h\}$ defined by $\phi_h=0$ and  
\begin{equation*}
\phi_u = \sum_{i=h+1}^uZ_i\,,\qquad u> h.
\end{equation*}
Since $M_\kappa  - M_h = a+ \frac\d{2}\phi_{\kappa}$, and $a=o(1)$,  for $n$ sufficiently large one has
 \begin{equation*}
\dP\left( M_\kappa  - M_h \geq 2\delta\right) \leq \dP\left( \phi_{u}\geq 2\, \; \text{for some $u\geq h$}\right).
\end{equation*}
Finally, since the conditional variance of the $Z_i$'s satisfies
$$
b':=\sum_{i\geq h}{\rm Var}\left(Z_{i+1}|\cF_i\right)\leq 4\d^{-2}b\,,
$$
we may use \cite[Theorem 1.6]{freedman1975tail} to estimate 
$$
\dP\left( \phi_{u}\geq 2\, \;\text{for some $u\geq h$}\right)\leq e^2\left(\frac{b'}{2+b'}\right)^{2+b'}\leq (2e\d^{-2}b)^2. 
$$
Since  $b = n^{-1 + o(1)}$,  this concludes the proof of \eqref{eq:boundM}. 
\end{proof}

\begin{lemma}\label{tx2}
Suppose $u,s=\Theta(\log n)$ are such that $ s \leq u \wedge (1-\e) \ts$. Then 
$$\max_{x\in [n]}\,Q_x\left(\{ (X_0,\dots,X_s)\in\cT_{x} (s) \} \cap \{ (X_{s+1},\dots,X_u)\cap\cT_{x}(s)\neq \emptyset \}\right)\, \xrightarrow[n\to\infty]{\bfP}  \,0,$$ 
where $ \{ (X_{s+1},\dots,X_u)\cap\cT_{x}(s)\neq \emptyset \}$ denotes the event that there exists $s+1 \leq v \leq u$ such that $X_v$ is a vertex of $\cT_{x}(s)$.
\end{lemma}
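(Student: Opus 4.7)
The key point is that on the event $\{(X_0,\dots,X_s)\in\cT_x(s)\}$ the walker, moving along the directed edges of the tree $\cT_x(s)$ rooted at $x$, must land at a vertex $y=X_s$ of tree-depth exactly $s$. Since the construction of $\cT_x(s)$ in Section \ref{subsec:fogra} only extends $\sigma_z$ for $z$ at distance at most $s-1$ from $x$, and a walk in $\cT_x(s)$ only queries $\sigma_z$ for $z\in\{X_0,\dots,X_{s-1}\}$ (which excludes the terminal leaf $y$ because $\cT_x(s)$ is acyclic), the out-environment $\sigma_y$ is entirely unrevealed at time $s$. Moreover, Lemma \ref{le:kappa} and \eqref{eq:choiceh} yield $|V(\cT_x(s))|=|V(\cG_x(s))|\le\kappa+1\le se^{\overline\hh s}+1=O(n^{1-\e^2}\log n)=o(n)$.

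Concretely, fix $x\in[n]$ and the environment; let $L_x$ denote the set of depth-$s$ leaves of $\cT_x(s)$, let $E_x$ be the event of the lemma, and set $\tau:=\inf\{v\ge 0:X_v\in V(\cT_x(s))\}$. Because $\cT_x(s)$ is oriented away from $x$, the quenched strong Markov property gives
\begin{equation*}
Q_x(E_x)\;\le\;\sup_{y\in L_x}\,Q_y(\tau\le u-s).
\end{equation*}
For $y\in L_x$, running the continuation starting from $y$ by sequential generation in the already partially revealed environment, and exploiting the freshness of $\sigma_y$ together with the identity $V(\cT_x(s))=V(\cG_x(s))$ (since $\cT$ is spanning), one couples $(X_{s+1},\dots,X_u)$ with an i.i.d.\ sequence $(X^\star_1,\dots,X^\star_{u-s})$ of law \eqref{size-biased}, exactly as in \eqref{coupling} and Lemma \ref{lm:tx}, at total-variation cost $O((u-s)^2/n)=o(1)$. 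Since $X_v^\star$ is uniform on $[n]$, a union bound gives
\begin{equation*}
\PP\bigl(\exists\,v\in[1,u-s]:X_v^\star\in V(\cT_x(s))\bigr)\;\le\;\frac{(u-s)|V(\cT_x(s))|}{n}\;=\;O\!\left(n^{-\e^2}(\log n)^2\right),
\end{equation*}
and therefore $\sup_{y\in L_x}Q_y(\tau\le u-s)=o(1)$ in annealed probability.

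To upgrade this annealed estimate to the quenched statement $\max_{x\in[n]}Q_x(E_x)\to 0$ in probability, a direct application of \eqref{first-moment} loses a factor of $n$ that is not compensated by the bound of the previous paragraph. We therefore rely on the higher-moment method from the proof of Theorem \ref{pr:weight} with $k=\Theta(\log n)$ parallel copies as in \eqref{def:k}, and a recursive control of events $B_\ell$ of the form \eqref{bk}. The main technical obstacle lies here: one must carefully track possible overlaps between the $k$ parallel walks on $\cG_\cI(s)$, handle the cases that arise depending on whether a new walk exits $\cT_\cI(s)$ early or only after time $s$, and estimate the conditional probabilities $\PP(B_\ell\mid B_{\ell-1})$ in the same spirit as the case analysis of Section \ref{sec:concentration}. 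By contrast, the strong Markov reduction and the i.i.d.\ coupling of the continuation are direct applications of the sequential-generation paradigm already established there.
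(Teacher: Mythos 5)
The proposal identifies the right general framework (higher-moment method with $k=\Theta(\log n)$ parallel walks and recursive events $B_\ell$), and the initial observation that a length-$s$ directed walk in $\cT_x(s)$ lands at a fresh depth-$s$ leaf whose out-environment is unrevealed is correct and useful. However, you explicitly leave the crucial estimate $\PP(B_\ell\mid B_{\ell-1})=o(1)$ as ``the main technical obstacle'' to be carried out ``in the same spirit'' as earlier arguments, without actually supplying it. This is where the content of the lemma resides, and the gap is not a matter of routine bookkeeping.

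More specifically, your description of the case analysis to come is not the right one, and omits the key trick that makes it work. The paper's dichotomy for the $\ell$-th walk (conditioned on $B_{\ell-1}$) is: (i) $(X^\ell_0,\dots,X^\ell_s)$ \emph{coincides with} one of the $\leq\ell-1$ previously traced trajectories in $\cT_\cI(s)$, vs. (ii) it is a \emph{new} trajectory (diverging from all earlier ones at some step $\leq s$), in which case $X^\ell_s$ is a fresh vertex and the continuation can be coupled with i.i.d.\ uniforms. Your proposed split on ``whether a new walk exits $\cT_\cI(s)$ early or only after time $s$'' is not the relevant one, since $B$ already forces $(X^\ell_0,\dots,X^\ell_s)\in\cT_\cI(s)$. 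Crucially, to bound case (i) one must intersect $B$ with $\{\rho(s)\leq e^{-\underline\hh s}\}$ (harmless by Theorem \ref{pr:weight}), so that each already-traced path in $\cT_\cI(s)$ has weight at most $e^{-\underline\hh s}$ and hence the $\ell$-th walk repeats one of them with probability at most $k\,e^{-\underline\hh s}=o(1)$. Without this weight restriction, a previously traced path could have $\Omega(1)$ weight and the repeat-trajectory case cannot be controlled. This ingredient is entirely absent from your proposal, and your annealed strong-Markov reduction does not substitute for it in the $k$-fold moment argument, where the relevant freshness statement concerns $X^\ell_s$ relative to all earlier walks, not the single-walk $X_s$ relative to $\cT_x(s)$.
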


\begin{proof}
We use a version of the method explained in \eqref{higher-moment}, with $k = \cO ( \log n)$ as in \eqref{def:k} and $$B = \{ (X_0,\dots,X_s)\in\cT_{x}(s) \} \cap \{ (X_{s+1},\dots,X_u)\cap\cT_{x}(s)\neq \emptyset \} \cap  \{ \rho(s) \leq e^ { - \underline \hh\, s} \}, $$
Thanks to Theorem \ref{pr:weight}, the intersection with   $\{ \rho(s) \leq e^ { - \underline \hh\, s} \}$ is not restrictive. 
 Consider $k$ 
 independent trajectories $(X^1,W^1), \ldots , (X^k,W^k)$ with the same initial point $X^\ell_0 = \cI$ for any $1 \leq \ell \leq k$, where $\cI$ is picked uniformly at random in $[n]$. 
 For $1 \leq \ell \leq k$, define the events
$$
B_\ell := \{ (X^1 , W^1) \in B \} \cap \dots \cap \{ (X^\ell , W^\ell) \in B \}.
$$
As explained after  \eqref{bk}, it is sufficient to prove that 
$
\dP( B_\ell | B_{\ell -1} ) = o(1),
$ uniformly in $1 \leq \ell \leq k$. We will show the stronger uniform bounds: $\dP_\cF ( B_1  ) = o(1)$
 and, uniformly  in $2 \leq \ell \leq k$,
\begin{equation}\label{eq:All}
\dP_\cF ( B_\ell | B_{\ell -1}  ) = o(1),
\end{equation}
where $\dP_\cF (\cdot) = \dP ( \cdot | \cF)$ and  $\cF$ is the $\sigma$-algebra generated by the random variables $\cI,\cG_\cI (s)$, and $\cT_\cI (s)$. 
If $B_\ell$ holds, then two disjoint cases may occur: either 
\begin{enumerate}[(i)]
\item
  $(X^\ell_0, \ldots, X^\ell_s)$ equals one of the   trajectories $(X^i_0, \ldots, X^i_s)$, $1 \leq i \leq \ell-1$, in $ \cT_\cI (s)$, or
\item
 $(X^\ell_0, \ldots, X^\ell_s)$ is a new trajectory in $ \cT_\cI (s)$ and $(X^\ell_{s+1}, \ldots , X^\ell_t) \cap \cT_\cI(s)$ is not empty. 
\end{enumerate}
In the case  $\ell=1$ of course only the second scenario occurs.  
If (i) holds, then on the event $B_{\ell-1}$, $(X^\ell_0, \ldots, X^\ell_s)$ is one of the at most $\ell-1$ distinct trajectories in $ \cT_\cI (s)$ each of weight at most  $e^ { - \underline \hh\, s}$. Hence, the probability of this case is upper bounded by $k e^ { - \underline \hh\, s} = o (1)$. If (ii) holds, then the node $X^\ell_s$ has never been visited before and we may couple $(X^{\ell}_{s+1} , \ldots , X^\ell_u) $  with $u-s$ i.i.d.\ samples from the uniform law on $[n]$ at a total-variation cost  less than $\frac{ku^2}{n}=o(1)$; see the  proof of Theorem \ref{pr:weight}. If this coupling occurs, then the chance of intersecting $\cT_\cI (s)$ is at most 
$
(u-s) | \cT_{\cI} (s)  | / n$. The latter is  $o (1)$
since $ | \cT_{\cI} (s)  | \leq s e^{\overline\hh\, s} \leq s n^{1 - \e^2} $ by Lemma \ref{le:kappa}.  This concludes the proof of \eqref{eq:All}. \end{proof}

We turn to  the definition of nice trajectories. Let $\e , h$, and $t$ be fixed as in \eqref{eq:choicet}-\eqref{eq:choiceh}. Set also
$$
s := t - h. 
$$
Since $0 < \e < 1/20$, for $n$ large enough, $$
s \leq (1 - \veps) \ts.
$$
For a given $x \in [n]$ and $y \notin \cG_x(s)$, call $\cG^x_y(h)$ 
the graph spanned by trajectories in $\cG_y(h)$ which do not intersect nodes in $\cG_x( s)$. We denote by $S^x_\star$ the set of $y \notin \cG_x(s)$ such that $\tx(\cG_y^x (h))=0$. The set $\cN_t(x)$ of {\em nice paths} is defined as the subset of all paths ${\rm p}=(x_0, x_1, \ldots, x_t)\in[n]^{t+1}$, 
such that: 
\begin{enumerate}[1)]
\item $w({\rm p})\leq n^{- 1 - \e / 4}$;
\item $x_0= x$ and $(x_0,\dots,x_s)\in\cT_{x}(s)$;
\item $P(x_s,x_{s+1}) \geq n^{-\e /8}$. 
\item $x_{s+1}\in S^x_\star$ and $(x_{s+1},\dots,x_{t} ) \in \cG^x_{x_{s+1}} (h)$.
\end{enumerate}
\smallskip
\smallskip

Combining Lemma \ref{lm:low},  Lemma \ref{le:Sstar}, Lemma \ref{tx1}, Lemma \ref{tx2} and \eqref{rk:timeshift}, we have proved: 
\begin{proposition}\label{notnice}
For $\e, h, s,  t$ as above, we have 
\begin{equation}\label{eqnotnice1}
\max_{x\in S_\star}\,Q_x\left((X_0,\dots,X_t)\notin\cN_t(x)\right)\, \xrightarrow[n\to\infty]{\bfP}  \,0.
\end{equation}
\end{proposition}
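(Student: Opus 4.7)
The plan is to decompose the event $\{(X_0,\ldots,X_t)\notin\cN_t(x)\}$ into the union of the four events corresponding to the failures of conditions 1)--4) in the definition of $\cN_t(x)$, and show, via a union bound, that each is uniformly unlikely for $x\in S_\star$.

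Conditions 1), 2), 3) reduce directly to lemmas already established. The failure of 1) amounts to $\rho(t)>n^{-1-\varepsilon/4}$; applying Theorem~\ref{pr:weight} with the parameter $\varepsilon/2$ and using $(1-\varepsilon/2)(1+\varepsilon)\ge 1+\varepsilon/4$ (which holds for our $\varepsilon<1/20$), this is uniformly unlikely. The failure of 2) is precisely the event of Lemma~\ref{tx1}, applicable since $s=t-h=(9/10+\varepsilon)\ts$ satisfies $\varepsilon\,\ts\le s\le(1-\varepsilon)\,\ts$. The failure of 3) entails $W_{s+1}<n^{-\varepsilon/8}$, which is contained in the event considered in Lemma~\ref{lm:low} with $\gamma=\varepsilon/8$.

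Condition 4) is the main obstacle, since it couples the choice of $X_{s+1}$ with the trailing segment $(X_{s+1},\ldots,X_t)$ and demands avoidance of $\cG_x(s)$. I would handle it by exhibiting a sufficient event $\tilde B_4$: (i) $X_{s+1}\in S_\star$, (ii) $\{X_{s+1},\ldots,X_t\}\cap V(\cT_x(s))=\emptyset$, and (iii) $W_{s+2}\cdots W_t\ge e^{-\overline{\hh}\,h}$. Under (i), $\cG_{X_{s+1}}(h)$ is an out-tree rooted at $X_{s+1}$, and pruning all paths that hit $\cG_x(s)$ leaves an out-tree; combined with (ii) and the identity $V(\cT_x(s))=V(\cG_x(s))$ built into the sequential construction, this delivers $X_{s+1}\in S^x_\star$. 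Under (iii), every prefix of $(X_{s+1},\ldots,X_t)$ has length at most $h$ and weight at least $e^{-\overline{\hh}\,h}$, and by (ii) avoids $\cG_x(s)$, so every edge of the trailing trajectory belongs to $\cG^x_{X_{s+1}}(h)$, yielding the second half of 4).

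It remains to control the three ways $\tilde B_4$ can fail. Failure of (i) is uniformly unlikely by Lemma~\ref{le:Sstar} time-shifted by $s+1$ through \eqref{rk:timeshift}. Failure of (ii), after intersecting with condition 2) (already accounted for in the union bound), is exactly the event of Lemma~\ref{tx2} with $u=t$. Failure of (iii), after the same time-shift, is uniformly unlikely by Theorem~\ref{pr:weight} applied to the $(h-1)$-step trajectory issued from $X_{s+1}$, choosing any fixed $\varepsilon'\le\varepsilon$ so that $(1+\varepsilon')\hh\,(h-1)\le \overline{\hh}\,h$. Aggregating the six uniformly unlikely pieces by a union bound completes the plan.
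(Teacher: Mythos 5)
Your proposal is correct and matches the approach the paper intends: decompose $\{(X_0,\dots,X_t)\notin\cN_t(x)\}$ into the failure events of conditions 1)--4) and handle each via the lemmas, with the extra care for condition 4) that you correctly supply (sub-events (i),(ii),(iii), each reduced to Lemma~\ref{le:Sstar}, Lemma~\ref{tx2} after intersecting with condition 2), and Theorem~\ref{pr:weight} via \eqref{rk:timeshift}, respectively). The paper's one-line justification omits Theorem~\ref{pr:weight} from its list of cited ingredients, but your use of it for the failure of 1) (with parameter $\varepsilon/2$) and for (iii) (with any $\varepsilon'\le\varepsilon$) is exactly what is needed, since Lemma~\ref{lm:high} cannot cover these cases; the rest of your argument, including the numerical checks $s\le(1-\varepsilon)\ts$ and $(1-\varepsilon/2)(1+\varepsilon)\ge 1+\varepsilon/4$ for $\varepsilon<1/20$, is sound.
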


\subsection{Upper bound}\label{subsec:43upbo}
Recall that
\begin{equation}\label{upnice1}
P_0^t(x,y)= \sum_{{\rm p}\in\cN_{t}(x,y)}w({\rm p}),
\end{equation}
where $\cN_t(x,y) \subset \cN_t(x)$ is the subset of nice paths such that $x_t = y$.
\begin{proposition}\label{upnice}
Let $ \e , t$ be as in \eqref{eq:choicet}, and $\widehat \pi$ as in  \eqref{proxy}.
For any $\d >0$, with high probability
\begin{equation}\label{upnice2}
P_0^t(x,y)\leq (1+\d)\widehat\pi(y) + \frac\d{n}\,\qquad \forall x,y\in[n].
\end{equation}
\end{proposition}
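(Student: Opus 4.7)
The plan is to decompose each nice path $(x_0,\dots,x_t)\in\cN_t(x,y)$ into three pieces---a tree-path of $s$ steps in $\cT_x(s)$, a single exit step $(x_s,x_{s+1})$ with $P(x_s,x_{s+1})\ge n^{-\e/8}$ and $x_{s+1}\notin\cG_x(s)$, and a tail of $h-1$ steps staying in $\cG^x_{x_{s+1}}(h)$---and to show that, once the first two pieces are revealed, the tail behaves like an almost independent forward excursion from a nearly-uniform starting point, whose weight at $y$ is comparable to $\widehat\pi(y)$. Because $\cT_x(s)$ is a tree, the sum over tree-paths reaching a given endpoint $z_1$ collapses to a single weight $\mu_x(z_1)$, giving
\begin{equation*}
P_0^t(x,y)\;=\;\sum_{z_1,z_2}\mu_x(z_1)\,P(z_1,z_2)\,{\bf 1}_{\{P(z_1,z_2)\ge n^{-\e/8}\}}\,{\bf 1}_{\{z_2\in S_\star^x\setminus \cG_x(s)\}}\,\widetilde P^x_{z_2}(y),
\end{equation*}
where $\widetilde P^x_{z_2}(y):=\sum_{{\rm p}_3 \in \cG^x_{z_2}(h):\,z_2\to y} w({\rm p}_3)\le P^{h-1}(z_2,y)$. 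Note that $\sum_{z_1}\mu_x(z_1)\le 1$.

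The key is the sequential generation of $\sigma$. After revealing $\cG_x(s)$ and $\cT_x(s)$ (which freezes $\mu_x$ and the forbidden set $\cG_x(s)$, of size $o(n)$ by Lemma~\ref{le:kappa}), the exit step requires drawing $\sigma_{z_1}(j)$ for some fresh $j\notin\mathrm{Dom}(\sigma_{z_1})$ with $p_{z_1,j}\ge n^{-\e/8}$. The endpoint $z_2$ is therefore uniform over $[n]\setminus\mathrm{Ran}(\sigma_{z_1})$; since $|\mathrm{Ran}(\sigma_{z_1})|=o(n)$, a total-variation coupling of cost $o(1)$ makes $z_2$ uniform over $[n]$. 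Moreover, the portion of $\sigma$ explored during the tail stays disjoint from what was previously revealed (on the tree-excess-zero event, which holds with high probability by Lemma~\ref{bplus}), so the tail contribution $\widetilde P^x_{z_2}(y)$ can be coupled with a free $h-1$-step walk from $z_2$, up to an error absorbed by the restriction to $\cG^x_{z_2}(h)$, which removes only uniformly unlikely trajectories by Theorem~\ref{pr:weight} and Lemmas~\ref{lm:low}--\ref{le:Sstar}. Appending the exit step to the tail provides the missing $h$-th step, so the combined ``exit+tail'' factor, averaged in $z_2$, is dominated by $\widehat\pi(y)=\tfrac1n\sum_i P^h(i,y)$, up to a multiplicative $1+o(1)$ and an additive $o(1/n)$ term.

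The hard part is propagating this per-pair $o(1)$ estimate to all $n^2$ pairs $(x,y)$ \emph{simultaneously}, since a naive union bound would require failure probability $o(n^{-2})$ at each pair. Following the strategy of Section~\ref{sec:concentration} and \cite{bordenave2015random}, the resolution is a high-moment argument in the spirit of \eqref{higher-moment}: one runs $k=\Theta(\log n)$ independent $P_\sigma$-walks in the same environment, generated sequentially as in \eqref{bk}. The coupling of the tail with i.i.d.\ samples persists as long as the tree-excess of the union of their trajectories remains at most one, which holds with probability $1-o(\delta^k/n)$ by the argument of Lemma~\ref{lm:tx} applied to $kt=\Theta(\log^2 n)$ joint steps. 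Bounding the $k$-th moment of $\max_{x,y}\big[P_0^t(x,y)-(1+\delta)\widehat\pi(y)-\delta/n\big]_+$ by essentially $(o(1))^k\cdot\widehat\pi(y)^k$ then absorbs the $n^2$ union-bound cost over pairs $(x,y)$, yielding the simultaneous estimate \eqref{upnice2} with high probability. The main technical work lies in verifying that the tail-coupling error can indeed be made to decay geometrically in $k$ across independent walks, which requires rerunning the trajectory-weight analysis of Section~\ref{sec:concentration} conditionally on the revealed environment of the first $\ell-1$ walks.
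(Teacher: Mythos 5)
Your decomposition of a nice path into a tree segment, an exit step $(x_s,x_{s+1})$, and a tail is right and matches the paper's starting point \eqref{upnice3}, and the observation that the tree structure collapses the first segment to a single weight is correct. But the core of your argument does not work. First, a total-variation coupling of cost $o(1)$ to make $z_2$ uniform is far too coarse: the target $\widehat\pi(y)$ has order $1/n$, so an $o(1)$ TV error swamps it. Second, the higher-moment device \eqref{higher-moment} is the wrong tool here; it is designed to upgrade annealed bounds of the form $\PP(\cap_\ell\{(X^\ell,W^\ell)\in B\})=o(\delta^k/n)$ into $\max_i Q_i(B)\xrightarrow[]{\bfP}0$, i.e.\ to control random \emph{probabilities} of trajectorial events, whereas Proposition~\ref{upnice} asks for concentration of the $O(1/n)$ quantity $P_0^t(x,y)$ around $\widehat\pi(y)$ with per-pair failure probability $o(n^{-2})$, which your sketch never produces. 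The claimed moment bound ``$(o(1))^k\widehat\pi(y)^k$'' is unsupported and does not even parse once $y$ is maximized out.

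The paper's proof is both more elementary and much sharper. Let $\cF$ be the $\sigma$-algebra generated by $\{\sigma_z:z\notin{\mathcal V}_x(s)\}$, where ${\mathcal V}_x(s)$ is the set of leaves of $\cT_x(s)$ at distance $s$ from $x$. Then $\widehat w(u,k)$, $w({\rm p}_\star(v;y))$, $\cL_x(s)$ and $S_\star^x$ are all $\cF$-measurable, while conditionally on $\cF$ the permutations $\sigma_u$, $u\in{\mathcal V}_x(s)$, are independent and uniform. Hence $P_0^t(x,y)=\sum_{u\in{\mathcal V}_x(s)}f(u,\sigma_u)$ is, given $\cF$, a sum of \emph{independent} random variables. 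Crucially, the nice-path conditions 1) and 3) (total weight at most $n^{-1-\e/4}$ and exit weight at least $n^{-\e/8}$) force each summand to satisfy $0\le f(u,\sigma_u)\le n^{\e/8}\cdot n^{-1-\e/4}=n^{-1-\e/8}$, and \eqref{upnice4}--\eqref{upnice5} give $\EE_\cF P_0^t(x,y)\le\widehat\pi(y)$. Bernstein's inequality then yields a conditional failure probability $\exp(-c(\d)n^{\e/8})$, which absorbs the $n^2$ union bound with room to spare. The boundedness supplied by conditions 1) and 3)---which your proposal never exploits---is the key ingredient that makes the uniform bound possible, and no $k$-walk argument is needed for this proposition.
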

Notice that if Proposition \ref{upnice} is available, then the argument in \eqref{main}-\eqref{totvar2} allows us to estimate, with high probability, 
 \begin{align}\label{totvar3}
\|\widehat\pi-P^t(x,\cdot)\|_{\textsc{tv}} \leq q(x) + 2\delta,
\end{align}
where $q(x)=Q_x\left((X_0,\dots,X_t)\notin\cN_t(x)\right)$. From Proposition  \ref{notnice}, uniformly in $x \in S_\star$, one has $q(x) \xrightarrow[]{\bfP}  0$. 
This proves that $\|\widehat\pi-P^t(x,\cdot)\|_{\textsc{tv}}\xrightarrow[]{\bfP}0 $ holds uniformly in $x \in S_\star$. Using Corollary \ref{cor:Sstar}, with e.g.\ $s=\e\ts$ and $u=(1+2\e)\ts$, 
this implies \eqref{eq:goalnu} with $\nu= \widehat\pi$ and $t=(1+2\e)\ts$, for all $\e\in(0,1/20)$. The latter is sufficient to prove the same estimate for all $t=(\lambda+o(1))\ts$, $\lambda>1$, since the left hand side of \eqref{eq:goalnu} is monotone decreasing in $t\in\dN$ (because of the maximum over $i\in[n]$ this holds for an arbitrary distribution $\nu$). This ends the proof of the upper bound in Theorem \ref{th:main}.

\begin{proof}[Proof of Proposition \ref{upnice}]
Consider the set ${\mathcal V}_x(s)$ of all  nodes  at distance $s$ from $x$ in the tree $\cT_x(s)$. Any such node must be a leaf by construction. We define the set  $\cL_x(s)$ as the collection of  pairs $(u,k)$, where $u\in{\mathcal V}_x(s)$ and $k\in[n]$. An element of $\cL_x(s)$ is regarded as an arrow $(u,k)$, with cumulative weight $\widehat w(u,k)$. 
Given $v\in S^x_\star$, by definition there is at most one path of length $h$ from $v$ to $y$ in $\cG_v^x(h)$. If such a path exists, we call it ${\rm p}_\star(v;y)$.   Then, any ${\rm p}\in\cN_t(x,y)$ must be of the form $(x,\dots,u)\circ(u,v)\circ{\rm p}_\star(v;y)$, where $(x,\dots,u)$ is the unique path connecting $x$ to $u$ in $\cT_x(s)$, for some $u\in{\mathcal V}_x(s)$ and  some $v\in S_\star^x$. Here $\circ$ denotes the natural concatenation of paths. Therefore,
\begin{equation}\label{upnice3}
P_0^t(x,y) = \sum_{(u,k)\in\cL_x(s)}\widehat w(u,k)\sum_{v\in S_\star^x} w({\rm p}_\star(v;y)) {\bf 1}_{\{\widehat w(u,k)w({\rm p}_\star(v;y))\leq  n^{- 1 - \e / 4}\}} {\bf 1}_{\{p_{u,k} \geq n^{-\e /8}\}} {\bf 1}_{\{\sigma_u(k)=v\}}.
\end{equation}
Let $\cF$ denote the $\sigma$-algebra generated by all  the random  permutations $\{\sigma_z, z\notin {\mathcal V}_x(s)\}$. A crucial observation is that the quantities $\widehat w(u,k), w({\rm p}_\star(v;y))$, and the sets $\cL_x(s), S_\star^x$ are all $\cF$-measurable. Notice also that by construction one has
\begin{equation}\label{upnice4}
\frac1n\sum_{v\in S_\star^x} w({\rm p}_\star(v;y)) \leq \widehat\pi(y)\,,
\end{equation}
and 
\begin{equation}\label{upnice5}
\sum_{(u,k)\in\cL_x(s)}\widehat w(u,k)\leq 1\,.
\end{equation}
Moreover, conditioned on $\cF$ the remaining permutations $\sigma_u$, $u\in{\mathcal V}_x(s)$, are independent and satisfy $\sigma_u(k)=y$ with probability $1/n$ for all $k,y$. It follows from \eqref{upnice4}-\eqref{upnice5} that 
\begin{equation}\label{eq:boundpi}
\dE_\cF P_0^t(x,y) \leq \widehat\pi(y ),
\end{equation}
where $\dE_\cF$ is the conditional expectation associated to $\cF$.  Notice also that we may write \eqref{upnice3} as
$$
P_0^t(x,y) =\sum_{u\in{\mathcal V}_x(s)} f(u,\sigma_u),
$$
where 
$$
f(u, \sigma_u): = \sum_{k = 1}^n  \widehat w(u,k) 
 w({\rm p}_\star(\sigma_u(k);y)) {\bf 1}_{\{\widehat w(u,k)w({\rm p}_\star(\sigma_u(k);y))\leq  n^{- 1 - \e / 4}\}} {\bf 1}_{\{p_{u,k} \geq n^{-\e /8}\}} {\bf 1}_{\{\sigma_u(k)\in S_\star^x\}}. 
$$
Since there are at most $n^{\e / 8}$ indices $k$ such that $p_{u,k} \geq n^{-\e / 8}$, we have 
$$
0 \leq  f(u,\sigma_u)  \leq M =  n^{\e / 8} n^{- 1 - \e / 4} = n^{- 1 - \e / 8}  . 
$$
Thus using Bernstein's inequality (see e.g. \cite[Corollary 2.11]{MR3185193}), for $a>0$
$$
\dP_\cF \PAR{ P_0^t(x,y)  - \dE_\cF P_0^t(x,y) \geq a  } \leq \exp \PAR{ -  \frac{ a^2 }{2  M   \PAR{ \dE_\cF P_0^t(x,y)  + a  } }}.
$$
Applying the above to $a =\d\, \dE_\cF P_0^t(x,y) + \frac\d{n}$ and writing $r=n\dE_\cF P_0^t(x,y)$ one finds
$$
\dP_\cF \PAR{ P_0^t(x,y)  \geq (1+\d)\dE_\cF P_0^t(x,y) + \frac\d{n} } \leq \exp \PAR{ -\frac{ \d ^2  n ^{\e /8}  (r+1)^2}{2 (r(1+\d)+\d  )}}.
$$
Minimizing the exponent over $r\geq 0$ one has that for some constant $c(\d)>0$:
$$
\dP_\cF \PAR{ P_0^t(x,y)  \geq (1+\d)\dE_\cF P_0^t(x,y) + \frac\d{n} } \leq \exp \PAR{ -c(\d) n ^{\e /8} }.
$$
Using \eqref{eq:boundpi} and taking the expectation one obtains
$$
\dP \PAR{ P_0^t(x,y)  \geq (1+\d)\widehat\pi(y) + \frac\d{n} } \leq \exp \PAR{ -c(\d) n ^{\e /8} },
$$
which concludes the proof 
of Proposition \ref{upnice}. 
\end{proof}

\begin{remark}\label{rq:constant}{\em 
The range of values for the parameters $\veps$ and $h$ in \eqref{eq:choicet}-\eqref{eq:choiceh}
is dictated by the need that: a) the forward $h$-neighborhood be typically a tree after $\Theta(\log n)$ steps of the walk (Lemma \ref{le:Sstar}), and b) $t-h$ be smaller than $(1-\veps)\ts$, which guarantees that the walk typically stays on a tree during the first  $t-h$ steps (Lemma \ref{tx1}). One  could have for example replaced $1/10$ by any positive number $(1- \delta)/8$, with $0< \delta < 1/3$ and $1/20$ by $\delta/4$.} \end{remark}

\section{Proof of Theorem \ref{th:heavytails}}
\label{sec:heavytails}
Let $\omega=\left(\omega_{ij}\right)_{1\leq i,j< \infty}$ be i.i.d.\ positive random variables whose tail distribution function $G(t)=\PP(\omega_{ij}>t)$ satisfies (\ref{regvar}) for some $\alpha\in (0,1)$, and consider the random transition matrix 
\begin{eqnarray}
\label{def:Pw}
P_n(i,j) & := & \frac{\omega_{ij}}{\omega_{i1}+\cdots+\omega_{in}}\qquad (1\leq i,j\leq n).
\end{eqnarray}
Permuting entries within a row clearly leaves the distribution  of $P_n$ unchanged. Therefore, $P_n$ is of the form (\ref{def:P}), but with the parameters $(p_{i,j})_{1\le i,j\le n}$ now being random. In order to apply our Theorem \ref{th:main} and obtain Theorem \ref{th:heavytails}, we only have to establish that almost-surely,
\begin{eqnarray}
\label{toshow1}
\frac{1}{n}\sum_{i,j=1}^nP_n(i,j)\log P_n(i,j) \, \xrightarrow[n\to\infty]{} \, h(\alpha);\\
\label{toshow2}
\max_{i\in[n]}\sum_{j=1}^nP_n(i,j)\left(\log P_n(i,j)\right)^2  =  o\left(\log n\right);\\
\label{toshow3}
\limsup_{n\to\infty}\left\{\frac{1}{n}\sum_{i,j=1}^n{\bf 1}_{\{P_n(i,j)>1-\varepsilon\}}\right\} \, \xrightarrow[\varepsilon\to 0^+]{} \,  0.
\end{eqnarray}
The proof will rely on the following estimates on the random probability vector 
$
\left(P_n(1,1),\ldots,P_n(1,n)\right).
$
\begin{lemma}[Uniform sparsity]\label{lm:expUI}For each $\beta\in(\alpha,1)$, there exists $\lambda>0$ such that
\begin{eqnarray}
\label{betaepsilon}
\sup_{n\ge 1}\,\EE\left[\exp\left\{\lambda\sum_{j=1}^n\left(P_n(1,j)\right)^{\beta}\right\}\right]   \, < \,\infty.
\end{eqnarray}
\end{lemma}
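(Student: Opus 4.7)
The plan is to bound, uniformly in $n$, the quantity $Y_n:=\sum_{j=1}^n P_n(1,j)^\beta$ by a single random variable $Z$ whose exponential moment is computable via Campbell's formula for a Poisson point process. Since $Y_n$ depends only on the ratios $(\omega_{1j}/S_n)$ with $S_n=\sum_k\omega_{1k}$, it is scale-invariant, so the slowly varying factor in the generalized inverse $G^{-1}$ can be absorbed into an arbitrarily small perturbation of $\alpha$. Indeed, by Potter's bounds, $G^{-1}(u)\leq Cu^{-1/\alpha'}$ for $u$ small enough with any fixed $\alpha'\in(\alpha,\beta)$, so it suffices to treat the case where the $\omega_{1j}$ are iid Pareto$(\alpha)$.

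For Pareto, the standard representation $\omega_{1j}=U_j^{-1/\alpha}$ with iid $U_j\sim\mathrm{Unif}(0,1)$, combined with the order-statistics identity $U_{(j)}=\Gamma_j/\Gamma_{n+1}$ (where $\Gamma_j=E_1+\cdots+E_j$, with $E_i$ iid $\mathrm{Exp}(1)$), causes the common factor $\Gamma_{n+1}^{1/\alpha}$ to cancel in the ratios, giving
\begin{equation*}
Y_n \;=\; \frac{A_n}{B_n^\beta},\qquad A_n:=\sum_{j=1}^n\Gamma_j^{-\beta/\alpha},\qquad B_n:=\sum_{j=1}^n\Gamma_j^{-1/\alpha}.
\end{equation*}
Since $\beta/\alpha>1$ and $\Gamma_j\sim j$ almost surely, the monotone limit $A_\infty:=\sum_{j\geq 1}\Gamma_j^{-\beta/\alpha}$ is a.s.\ finite; combined with $B_n\geq B_1=\Gamma_1^{-1/\alpha}$, this yields the deterministic, $n$-free bound
\begin{equation*}
Y_n\;\leq\; A_\infty\,\Gamma_1^{\beta/\alpha}\;=\;\sum_{j\geq 1}\!\left(\frac{\Gamma_1}{\Gamma_j}\right)^{\!\beta/\alpha}.
\end{equation*}

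The exponential moment of this upper bound is then computed by conditioning on $\Gamma_1=t$: by memorylessness, $(\Gamma_j-t)_{j\geq 2}$ is a rate-$1$ Poisson point process $N$ on $(0,\infty)$ independent of $t$, whence (with $\gamma:=\beta/\alpha>1$)
\begin{equation*}
A_\infty\Gamma_1^\gamma\;=\;1+\int_0^\infty\!\left(\frac{t}{t+v}\right)^{\!\gamma}N(dv).
\end{equation*}
Campbell's theorem, together with the substitution $s=t/(t+v)$, gives
\begin{equation*}
\EE\!\left[e^{\lambda A_\infty\Gamma_1^\gamma}\,\bigm|\,\Gamma_1=t\right]\;=\;e^\lambda\exp\!\bigl(t\,I(\lambda)\bigr),\qquad I(\lambda):=\int_0^1\!\bigl(e^{\lambda s^\gamma}-1\bigr)\,\frac{ds}{s^2}.
\end{equation*}
The integral $I(\lambda)$ is finite because the integrand is $O(\lambda s^{\gamma-2})$ near $s=0$ and $\gamma>1$; since $I$ is continuous with $I(0)=0$, one can pick $\lambda_0>0$ with $I(\lambda_0)<1$. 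Integrating against $\Gamma_1\sim\mathrm{Exp}(1)$ then yields
\begin{equation*}
\sup_{n\geq 1}\EE\!\left[e^{\lambda_0Y_n}\right]\;\leq\;\EE\!\left[e^{\lambda_0A_\infty\Gamma_1^\gamma}\right]\;=\;\frac{e^{\lambda_0}}{1-I(\lambda_0)}\;<\;\infty,
\end{equation*}
which is the desired bound.

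The principal obstacle is the Pareto reduction. Potter's bounds control the slowly varying factor in the heavy-tail regime where $\Gamma_j/\Gamma_{n+1}$ is small, but the ``bulk'' indices $j$ close to $n$, where $\Gamma_j/\Gamma_{n+1}$ is of order one, have to be treated separately. The point is that those order statistics contribute only lower-order terms to both $A_n$ and $B_n$, so the comparison with the Pareto case survives; and on the low-probability event $\{\Gamma_1/\Gamma_{n+1}\geq u_0\}$ one falls back on the trivial deterministic bound $Y_n\leq n^{1-\beta}$, whose contribution is annihilated by the exponentially small probability ($\lesssim e^{-u_0 n/2}$) of that event.
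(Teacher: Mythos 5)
Your approach is genuinely different from the paper's, and the Pareto case is handled cleanly and correctly: the order-statistics representation $U_{(j)}=\Gamma_j/\Gamma_{n+1}$, the reduction of the exponential moment to Campbell's formula for the Poisson process $(\Gamma_j-\Gamma_1)_{j\geq 2}$, and the finiteness of $I(\lambda)=\int_0^1(e^{\lambda s^\gamma}-1)s^{-2}\,ds$ for $\gamma=\beta/\alpha>1$ are all correct, and the formula $\EE[e^{\lambda A_\infty\Gamma_1^\gamma}]=e^\lambda/(1-I(\lambda))$ is right. The paper, by contrast, never invokes a Pareto reduction: it works directly from the Karamata-type estimate $\EE[(\omega_{11}\wedge t)^\beta]\lesssim t^\beta G(t)$, picks $t=\max_{j\notin J}\omega_{1j}$, and exploits the fact that the events $A_J=\{\min_{j\in J}\omega_{1j}>\max_{j\notin J}\omega_{1j}\}$ are pairwise disjoint over fixed $|J|=k$, giving $\sum_{|J|=k}\EE[\prod_{j\in J}P_n(1,j)^\beta]\le c_\beta^k$ and then $e^{\lambda x}\le 1+(e^\lambda-1)x$. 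This sidesteps both Potter's bounds and order statistics entirely and is shorter.

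The gap in your proposal is the passage from general regularly varying $G$ to the Pareto case. First, the bound you need is the \emph{ratio} form of Potter's bound, $G^{-1}(u')/G^{-1}(u)\le A(u'/u)^{-1/\alpha'}$ for $u,u'<u_0$, not just $G^{-1}(u)\le Cu^{-1/\alpha'}$; the latter controls only the numerator and leaves a mismatched exponent in the denominator. Second, and more seriously, Potter's bound applies only when both $U_{(j)}$ and $U_{(1)}$ are below $u_0$, and for ``bulk'' indices $j$ with $U_{(j)}\ge u_0$ this fails. Your fallback event $\{\Gamma_1/\Gamma_{n+1}\ge u_0\}$ only covers the rare scenario in which the \emph{smallest} uniform is not small; it does nothing about the $\Theta(n)$ indices with $U_{(j)}\ge u_0$ that are present on every sample. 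You acknowledge this (``those order statistics contribute only lower-order terms'') but give no estimate; one in fact needs to show that their total contribution to $Y_n$ is at most $O\bigl(n(U_{(1)})^{\beta/\alpha''}\bigr)=O\bigl(n^{1-\beta/\alpha''}\Gamma_1^{\beta/\alpha''}\bigr)$ on a high-probability event, and then feed this extra $\Gamma_1$-power into the Poisson moment computation. This is doable but is precisely the content that is missing, so as written the proof is incomplete at its declared ``principal obstacle.''
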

\begin{lemma}[Beta asymptotics]\label{lm:beta}Let $\xi_n$ be distributed as a  size-biased pick from the random sequence $\left(P_n(1,1),\ldots,P_n(1,n)\right)$, i.e., for any measurable $g\colon [0,1]\to[0,\infty]$,
\begin{eqnarray*}
\EE\left[g(\xi_{n})\right]  \,=\,  \EE\left[\sum_{j=1}^nP_n(1,j)g\left(P_n(1,j)\right)\right] \, = \, n\EE\left[P_n(1,1)g\left(P_n(1,1)\right)\right]. 
\end{eqnarray*}  
Then $\xi_n\xrightarrow[n\to\infty]{d} \xi$, where 
$\xi$ has the Beta$(1-\alpha,\alpha)-$density: 
\begin{eqnarray*}
f_\alpha(u) =  \frac{(1-u)^{\alpha-1}u^{-\alpha}}{\Gamma(\alpha)\Gamma(1-\alpha)},\qquad (0<u<1).
\end{eqnarray*}
\end{lemma}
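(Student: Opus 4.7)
The plan is to prove the weak convergence $\xi_n \xrightarrow[n\to\infty]{d} \xi$ via convergence of moments, which suffices since each $\xi_n$ takes values in the compact interval $[0,1]$. By the definition of the size-biased pick and the independence/identical distribution of the $\omega_{1j}$'s,
\[
\EE[\xi_n^k] \;=\; n\,\EE\!\left[\frac{\omega_{11}^{k+1}}{S_n^{k+1}}\right], \qquad S_n:=\omega_{11}+\cdots+\omega_{1n},
\]
for every integer $k\geq 0$. A direct computation of moments of the Beta$(1-\alpha,\alpha)$ density gives $\Gamma(k+1-\alpha)/(\Gamma(k+1)\Gamma(1-\alpha))$, so my goal is to prove that $\EE[\xi_n^k]$ converges to this quantity.

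My main tool is the Gamma integral $S_n^{-(k+1)} = \Gamma(k+1)^{-1} \int_0^\infty t^k e^{-tS_n}\,\D t$, which, together with independence, converts the above expectation into
\[
\EE[\xi_n^k] \;=\; \frac{n}{\Gamma(k+1)}\int_0^\infty t^k\,\EE\!\left[\omega_{11}^{k+1}e^{-t\omega_{11}}\right]\,\phi(t)^{n-1}\,\D t,
\]
where $\phi(t):=\EE[e^{-t\omega_{11}}]$. Next, I would pick $a_n\to\infty$ so that $S_n/a_n$ converges in distribution to a positive $\alpha$-stable variable with Laplace transform $e^{-t^\alpha}$; this amounts to $n G(a_n)\to 1/\Gamma(1-\alpha)$. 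Upon rescaling $t\mapsto t/a_n$, two ingredients drive the limit. First, Karamata's Tauberian theorem gives $1-\phi(s)\sim\Gamma(1-\alpha)s^\alpha L(1/s)$ with $L(x):=x^\alpha G(x)$ slowly varying, hence $\phi(t/a_n)^{n-1}\to e^{-t^\alpha}$. Second, the same Tauberian theorem, applied this time to the regularly varying truncated moment $x\mapsto \int_0^x y^{k+1}\,\D F(y)\sim \frac{\alpha}{k+1-\alpha}x^{k+1-\alpha}L(x)$ (whose asymptotics follow from Karamata's theorem on $\int_0^x y^k G(y)\,\D y$), yields $\frac{n}{a_n^{k+1}}\EE[\omega^{k+1}e^{-(t/a_n)\omega}]\to \frac{\alpha\,\Gamma(k+1-\alpha)}{\Gamma(1-\alpha)}\,t^{\alpha-k-1}$. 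The pointwise limit of the integrand is therefore proportional to $t^{\alpha-1}e^{-t^\alpha}$, whose integral over $(0,\infty)$ equals $1/\alpha$ via the substitution $u=t^\alpha$, and collecting constants recovers the announced moment.

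The main obstacle is justifying the exchange of limit and integral, since $L$ is only assumed slowly varying, with no extra regularity. I would handle this with Potter-type inequalities: for any small $\eta>0$, both $n(1-\phi(t/a_n))$ and $\frac{n}{a_n^{k+1}}\EE[\omega^{k+1}e^{-(t/a_n)\omega}]$ are sandwiched uniformly in $n$ between constant multiples of $t^{\alpha\pm\eta}$ and $t^{\alpha-k-1\pm\eta}$ respectively, producing a uniform integrable majorant of the form $C t^{\alpha-1-\eta}\exp(-c\,t^{\alpha-\eta'})$ on $(0,\infty)$; splitting the domain of integration into a neighborhood of $0$, a compact set, and a neighborhood of $\infty$ makes this rigorous. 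A conceptually cleaner but less self-contained alternative would be to prove that the point process $\sum_j \delta_{\omega_{1j}/a_n}$ converges to a Poisson process of intensity $\alpha x^{-\alpha-1}\,\D x$, deduce that the decreasing rearrangement of $(P_n(1,j))_{j\leq n}$ converges to the Poisson--Dirichlet law PD$(\alpha,0)$, and invoke the classical identity that a size-biased pick from PD$(\alpha,0)$ is Beta$(1-\alpha,\alpha)$-distributed, concluding by the bounded-support continuity of the size-biasing operation.
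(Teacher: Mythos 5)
Your proposal is correct and reaches the same family of moment identities as the paper, but by a genuinely different route. The paper also reduces to convergence of moments, but it then reduces the moment limit to the single pointwise tail estimate $n\,\PP\left(P_n(1,1)>u\right)\to\kappa\left(\tfrac{1-u}{u}\right)^\alpha$ with $\kappa^{-1}=\Gamma(1+\alpha)\Gamma(1-\alpha)$, and proves this \emph{without} any stable-law or Tauberian machinery: it writes $\PP(P_n(1,1)>u)=\EE[G(S_n)R(S_n)]$ with $R(s)=G\bigl(us/(1-u)\bigr)/G(s)$, compares to the reference value $u=\tfrac12$ where $R\equiv 1$, uses $S_n\to\infty$ together with the boundedness of $R$ to get $\EE\bigl[(R(S_n)-((1-u)/u)^\alpha)^2\bigr]\to 0$, kills the error with the elegant symmetry bound $\EE[G(S_n)^2]\le\binom{n+1}{2}^{-1}$ and Cauchy--Schwarz, and finally identifies the constant $\kappa$ from the $p=0$ moment. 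You, by contrast, pass to the Gamma-integral representation of $S_n^{-(k+1)}$, rescale by $\alpha$-stable normalizers $a_n$, and invoke Karamata/Abelian asymptotics for both $1-\phi(t/a_n)$ and the truncated-moment Laplace transform $\EE[\omega^{k+1}e^{-(t/a_n)\omega}]$, finishing by dominated convergence via Potter bounds. Your computation checks out: the pointwise limit $\tfrac{\alpha\Gamma(k+1-\alpha)}{\Gamma(1-\alpha)}\,t^{\alpha-1}e^{-t^\alpha}$ integrates over $(0,\infty)$ to $\Gamma(k+1-\alpha)/(\alpha\,\Gamma(1-\alpha))$, and dividing by $\Gamma(k+1)$ gives the Beta$(1-\alpha,\alpha)$ moment $\Gamma(k+1-\alpha)/(\Gamma(k+1)\Gamma(1-\alpha))$. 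You are right that dominated convergence is the delicate step: Potter bounds only control the regularly varying factors when $a_n/t$ is large, so you should split around $t\asymp a_n$ and use the crude exponential decay of $\phi(t/a_n)^{n-1}$ for $t\gtrsim a_n$. The trade-off is that your route follows a classical template for heavy-tailed normalization (and your alternative via convergence of the point process $\sum_j\delta_{\omega_{1j}/a_n}$ to a Poisson process and the size-biased-pick property of $\mathrm{PD}(\alpha,0)$ is also standard and conceptually transparent, though it outsources the key facts), whereas the paper's argument is shorter and entirely self-contained, never introducing $a_n$, stable laws, Tauberian theorems, or Potter bounds --- at the cost of a less obvious symmetry trick that one has to find.
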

Before we establish those Lemmas, let us quickly see how they imply the three almost-sure conditions stated above.  For any $0<\varepsilon,\beta<1$, we have
\begin{eqnarray}
\label{UI}
\sum_{j=1}^nP_n(i,j)\left(\log P_n(i,j)\right)^2 
\, \le  \, (\log \varepsilon)^2+\sup_{p\in[0,\varepsilon]}\left\{p^{1-\beta}(\log p)^2\right\}\sum_{j=1}^n\left(P_n(i,j)\right)^\beta,
\end{eqnarray}
where we have simply split the summands according to whether $P_n(i,j)\le \varepsilon$ or not. Note that the supremum on the right-hand side can be made arbitrarily small by choosing $\varepsilon$ small enough. Claim (\ref{toshow2}) follows, since for $\beta>\alpha$, Lemma \ref{lm:expUI} ensures that almost-surely as $n\to\infty$,
\begin{eqnarray}
\max_{i\in[n]}\left\{\sum_{j=1}^n\left(P_n(i,j)\right)^\beta\right\}  \, = \, \cO(\log n).
\end{eqnarray}

We now turn to (\ref{toshow1}). The row entropies $\left\{-\sum_{j=1}^nP_n(i,j)\log P_n(i,j)\right\}_{1\le i\le n}$ are independent, $[0,\log n]-$valued random variables with mean $-\EE[\log\xi_n]$, where $\xi_n$ is as in Lemma \ref{lm:beta}. Therefore, Azuma-Hoeffding's inequality ensures that almost-surely as $n\to\infty$,
\begin{eqnarray*}
\frac{1}{n}\sum_{i,j=1}^nP_n(i,j)\log P_n(i,j)   & = & \EE[\log \xi_n] + o(1).
\end{eqnarray*}
In view of (\ref{UI}), Lemma \ref{lm:expUI} is more than enough to ensure the uniform integrability of  $(\log \xi_n)_{n\ge 1}$. Together with the weak convergence $\xi_n\to\xi$ stated in Lemma \ref{lm:beta}, this implies
\begin{eqnarray*}
\EE\left[\log \xi_n\right] & \xrightarrow[n\to\infty]{} & \EE\left[\log \xi\right].
\end{eqnarray*}
It is classical that the expected logarithm of a Beta$(1-\alpha,\alpha)$ is $\psi(\alpha)-\psi(1)=-h(\alpha)$, and (\ref{toshow1}) follows.

 The proof of (\ref{toshow3}) is similar: for each $\varepsilon<\frac 12$, the random variables $\left\{\sum_{j=1}^n{\bf 1}_{\{P_n(i,j)\ge 1-\varepsilon\}}\right\}_{1\le i\le n}$ are independent, $[0,1]-$valued and with mean $\EE[\xi_n^{-1}{\bf 1}_{\{\xi_n\ge1-\varepsilon\}}]$. Therefore, Azuma-Hoeffding's inequality ensures that almost-surely as $n\to\infty$, 
\begin{eqnarray*}
\frac{1}{n}\sum_{i,j=1}^n{\bf 1}_{\{P_n(i,j)\ge 1-\varepsilon\}}   & = & \EE[\xi_n^{-1}{\bf 1}_{\{\xi_n\ge1-\varepsilon\}}] + o(1)\\
& = & \EE[\xi^{-1}{\bf 1}_{\{\xi\ge1-\varepsilon\}}] + o(1),
\end{eqnarray*}
where the second line follows from Lemma \ref{lm:beta} and the fact that the Beta distribution is atom-free. It remains to note that $\EE[\xi^{-1}{\bf 1}_{\{\xi\ge1-\varepsilon\}}]\to 0$ as $\varepsilon\to 0$, since $\PP\left(\xi\in (0,1)\right)=1$. We now turn to the proof of Lemmas \ref{lm:expUI} and \ref{lm:beta}.
\subsection{Proof of Lemma \ref{lm:expUI}}

Our starting point is a classical result on regularly varying functions (see, e.g., \cite[Theorem VIII.9.1]{MR0270403}), which asserts that as $t\to\infty$,
\begin{eqnarray*}
\EE\left[\left({\omega_{11}}\wedge t\right)^{\beta}\right]  & \sim & \frac{\beta}{\beta-\alpha}t^{\beta}\,\PP\left(\omega_{11}>t\right).
\end{eqnarray*}
In particular, there exists a constant $c_\beta<\infty$ such that for all $t>0$, 
\begin{eqnarray*}
\EE\left[\left(\frac{\omega_{11}}{t}\wedge 1\right)^{\beta}\right]  \leq c_\beta \,\PP\left(\omega_{11}>t\right). 
\end{eqnarray*}
Since $(\omega_{11},\ldots,\omega_{1n})$ are i.i.d., we immediately obtain that for any $J\subset[n]$,
\begin{eqnarray*}
\EE\left[\prod_{j\in J}\left(\frac{\omega_{1j}}{t}\wedge 1\right)^{\beta}\right]  \leq c_\beta^{|J|}\, \PP\left(\min_{j\in J}\omega_{1j}>t\right). 
\end{eqnarray*}
This formula holds for any $t>0$, and we may choose $t=\max_{j\in[n]\setminus J}\omega_{1j}$, since the latter is independent of $(\omega_{1j})_{j\in J}$. With this choice of $t$, we clearly have $P_{n}(1,j)\leq \frac{\omega_{1j}}t\wedge 1$ and therefore
\begin{eqnarray*}
\EE\left[\prod_{j\in J}\left(P_n(1,j)\right)^{\beta}\right] \leq  c_\beta^{|J|}\, \PP\left(\min_{j \in J}\omega_{1j}>\max_{j\in[n]\setminus J}\omega_{1j}\right).
\end{eqnarray*}
Write $A_J$ for the event on the right-hand side. Clearly, the $\{A_J\colon J\subset[n],|J|=k\}$ are pairwise disjoint. Thus,
\begin{eqnarray}
\label{cbeta}
\sum_{J\subset[n],|J|=k}\EE\left[\prod_{j\in J}\left(P_n(1,j)\right)^{\beta}\right] \leq  c_\beta^{k}.
\end{eqnarray}
This is enough to conclude. Indeed, using $e^{\lambda x}\leq 1+(e^\lambda-1) x$ for  $x\in[0,1]$, we have
\begin{align*}
\EE\left[\exp\left\{\lambda\sum_{j=1}^n\left(P_n(1,j)\right)^{\beta}\right\}\right] & \leq  \EE\left[\prod_{j=1}^n\left(1+(e^\lambda-1)\left(P_n(1,j)\right)^{\beta}\right)\right]\\  
& \leq  \sum_{k=0}^n(e^\lambda-1)^k\sum_{J\subset[n],|J|=k}\EE\left[\prod_{j\in J}\left(P_n(1,j)\right)^{\beta}\right]\\
 & \leq   \sum_{k=0}^n\left(c_\beta(e^\lambda-1)\right)^k,
\end{align*}
which is bounded uniformly in $n$ as long as $\lambda<\log\left(1+\frac 1{c_\beta}\right)$. 

\subsection{Proof of Lemma \ref{lm:beta}}
Since the $(\xi_n)_{n\ge 1}$ are $[0,1]-$valued, it is enough to prove 
$\EE\left[\xi_{n}^p\right]\to \EE\left[\xi^p\right]$ for each $p\ge 0$. We first rewrite both sides as follows:
\begin{eqnarray}
\label{betaleft}
\EE\left[\xi_{n}^p\right] & = & \int_0^1{n}\PP\left(\left\{P_n(1,1)\right\}^{p+1}>u\right)du \ = \ \int_0^1{n}\PP\left(P_n(1,1)>u\right)(p+1)u^{p}du, \\
\label{betaright}
\EE[\xi^p] & = & \int_0^1  \frac{f_\alpha(u)}{u}u^{p+1}  du\ = \  \int_0^1  \kappa\left(\frac{1-u}{u}\right)^\alpha(p+1)u^p  du,
\end{eqnarray}
where $\kappa^{-1}=\Gamma(1+\alpha)\Gamma(1-\alpha)$, and where we have used the change of variables $u\mapsto u^{p+1}$ for (\ref{betaleft}) and an integration by parts for (\ref{betaright}). Comparing these two lines,  our goal reduces to proving that
\begin{eqnarray}
\label{claim:u}
\forall u\in(0,1),\qquad n\PP\left(P_n(1,1)>u\right) & \xrightarrow[n\to\infty]{} & \kappa\left(\frac{1-u}{u}\right)^{\alpha}.
\end{eqnarray}
Indeed, the convergence of (\ref{betaleft}) to (\ref{betaright}) then follows by dominated convergence since for $\beta\in(\alpha,1)$,  
\begin{eqnarray}
\label{betadom}
n\PP\left(P_n(1,1)>u\right)  \,= \, \EE\left[\sum_{j=1}^n{\bf 1}_{\{P_n(1,j)>u\}}\right]\, \leq \,  u^{-\beta}\EE\left[\sum_{j=1}^n\{P_n(1,j)\}^\beta\right] \, \leq \, c_\beta u^{-\beta},
\end{eqnarray}
by (\ref{cbeta}). We may now fix $0<u<1$ and focus on (\ref{claim:u}). Our regular variation assumption on $G$ yields
\begin{eqnarray}
\label{varphis}
R(s) \, := \, \frac{G\left(\frac{us}{1-u}\right)}{G(s)} & \xrightarrow[s\to\infty]{} & \left(\frac {1-u}u\right)^{\alpha}.
\end{eqnarray}
In particular, $s\mapsto R(s)$ is bounded on $(0,\infty)$. Now, since $\omega_{11}$ is independent of $S_n:=\omega_{12}+\cdots+\omega_{1n}$,
\begin{eqnarray*}
\PP\left(P_n(1,1)>u\right) \, = \, \PP\left(\omega_{11}>\frac{u S_{n}}{1-u}\right)
\, = \, \EE\left[G\left(\frac{u S_{n}}{1-u}\right)\right]\, = \, \EE\left[G(S_{n})R (S_{n})\right].
\end{eqnarray*}
Observing that the right-hand side simplifies to $\EE\left[G(S_{n})\right]$ when $u=\frac 12$, we deduce that
\begin{eqnarray*}
\PP\left(P_n(1,1)>u\right)-\left(\frac {1-u}u\right)^{\alpha}\PP\left(P_n(1,1)>\frac{1}{2}\right) \, = \,  \EE\left[G\left(S_{n}\right)\left\{R(S_{n})-\left(\frac {1-u}u\right)^{\alpha}\right\}\right].
\end{eqnarray*}
Since $S_{n}$ increases almost-surely to $+\infty$ as $n\to\infty$ and since $R$ is bounded, (\ref{varphis}) implies that
\begin{eqnarray*}
\EE\left[\left\{R(S_{n})-\left(\frac {1-u}u\right)^{\alpha}\right\}^2\right] & \xrightarrow[n\to\infty]{} & 0,
\end{eqnarray*}
by dominated convergence. On the other hand, since $G$ is decreasing, we have 
\begin{eqnarray*}
\EE\left[\left\{G\left(S_{n}\right)\right\}^2\right] \, \leq
 \, \EE\left[\left\{G\left(\max_{2\leq j \leq n}\omega_{1j}\right)\right\}^2\right] \, = \, \PP\left[\min(\omega_{11},\omega_{1(n+1)})>\max_{2\leq j \leq n}\omega_{1j}\right] \, 
 \leq \, {n+1 \choose 2}^{-1},
 \end{eqnarray*}
 by symmetry. Invoking the Cauchy-Schwarz inequality, we conclude that
\begin{eqnarray*}
n\PP\left(P_n(1,1)>u\right) - \left(\frac {1-u}u\right)^{\alpha}n\PP\left(P_n(1,1)>\frac{1}{2}\right) & \xrightarrow[n\to\infty]{} & 0.
\end{eqnarray*}
This is not quite (\ref{claim:u}), as it is not yet clear that $n\PP\left(P_n(1,1)>\frac{1}{2}\right)\to\kappa$. However, one may still insert this into (\ref{betaleft}) and invoke the domination (\ref{betadom}) to obtain that
\begin{eqnarray*}
\EE\left[\xi_{n}^p\right] - \frac{n}{\kappa}{\PP\left(P_n(1,1)>\frac{1}{2}\right)}\EE[\xi^p]  & \xrightarrow[n\to\infty]{} & 0.
\end{eqnarray*}
But now the  special case $p=0$ shows that $n\PP\left(P_n(1,1)>\frac{1}{2}\right)\to\kappa$, which completes the proof.  


 \bibliographystyle{abbrv}
\bibliography{sparse}

\begin{thebibliography}{10}

\bibitem{2015arXivRout}
L.~Addario-Berry, B.~Balle, and G.~Perarnau.
\newblock {Diameter and stationary distribution of random $r$-out digraphs}.
\newblock {\em ArXiv e-prints}, 2015.

\bibitem{aldous1983mixing}
D.~Aldous.
\newblock Random walks on finite groups and rapidly mixing {M}arkov chains.
\newblock In {\em Seminar on probability, {XVII}}, volume 986 of {\em Lecture
  Notes in Math.}, pages 243--297. Springer, Berlin, 1983.

\bibitem{aldous1986shuffling}
D.~Aldous and P.~Diaconis.
\newblock {Shuffling cards and stopping times}.
\newblock {\em American Mathematical Monthly}, pages 333--348, 1986.

\bibitem{aldous2002reversible}
D.~Aldous and J.~Fill.
\newblock {Reversible {M}arkov chains and random walks on graphs}, 2002.

\bibitem{2014arXiv1409.3250B}
R.~Basu, J.~Hermon, Y.~Peres, et~al.
\newblock Characterization of cutoff for reversible markov chains.
\newblock {\em The Annals of Probability}, 45(3):1448--1487, 2017.

\bibitem{2015arXivNBRW}
A.~Ben-Hamou, J.~Salez, et~al.
\newblock Cutoff for nonbacktracking random walks on sparse random graphs.
\newblock {\em The Annals of Probability}, 45(3):1752--1770, 2017.

\bibitem{2015arXiv}
N.~Berestycki, E.~Lubetzky, Y.~Peres, and A.~Sly.
\newblock Random walks on the random graph.
\newblock {\em arXiv preprint arXiv:1504.01999}, 2015.

\bibitem{2016arXiv161001836B}
C.~Bordenave, P.~Caputo, D.~Chafa{\"\i}, and D.~Piras.
\newblock Spectrum of large random markov chains: heavy-tailed weights on the
  oriented complete graph.
\newblock {\em Random Matrices: Theory and Applications}, page 1750006, 2017.

\bibitem{bordenave2015random}
C.~Bordenave, P.~Caputo, and J.~Salez.
\newblock Random walk on sparse random digraphs.
\newblock {\em arXiv:1508.06600. Probability Theory and Related Fields}, to
  appear.

\bibitem{MR3185193}
S.~Boucheron, G.~Lugosi, and P.~Massart.
\newblock {\em Concentration inequalities}.
\newblock Oxford University Press, Oxford, 2013.
\newblock A nonasymptotic theory of independence, With a foreword by Michel
  Ledoux.

\bibitem{diaconis1996cutoff}
P.~Diaconis.
\newblock The cutoff phenomenon in finite {M}arkov chains.
\newblock {\em Proc. Nat. Acad. Sci. U.S.A.}, 93(4):1659--1664, 1996.

\bibitem{DS}
P.~Diaconis and L.~Saloff-Coste.
\newblock Separation cut-offs for birth and death chains.
\newblock {\em The Annals of Applied Probability}, 16(4):2098--2122, 2006.

\bibitem{diaconis1981generating}
P.~Diaconis and M.~Shahshahani.
\newblock {Generating a random permutation with random transpositions}.
\newblock {\em Probability Theory and Related Fields}, 57(2):159--179, 1981.

\bibitem{MR3068032}
P.~Diaconis and P.~M. Wood.
\newblock Random doubly stochastic tridiagonal matrices.
\newblock {\em Random Structures Algorithms}, 42(4):403--437, 2013.

\bibitem{ding2010total}
J.~Ding, E.~Lubetzky, and Y.~Peres.
\newblock Total variation cutoff in birth-and-death chains.
\newblock {\em Probability theory and related fields}, 146(1-2):61--85, 2010.

\bibitem{MR0270403}
W.~Feller.
\newblock {\em An introduction to probability theory and its applications.
  {V}ol. {II}.}
\newblock Second edition. John Wiley \& Sons, Inc., New York-London-Sydney,
  1971.

\bibitem{freedman1975tail}
D.~A. Freedman.
\newblock On tail probabilities for martingales.
\newblock {\em The Annals of Probability}, pages 100--118, 1975.

\bibitem{MR2121795}
M.~Hildebrand.
\newblock A survey of results on random random walks on finite groups.
\newblock {\em Probab. Surv.}, 2:33--63, 2005.

\bibitem{levin2009markov}
D.~A. Levin, Y.~Peres, and E.~L. Wilmer.
\newblock {\em Markov chains and mixing times}.
\newblock American Mathematical Soc., 2009.

\bibitem{2015arXiv150704725L}
E.~Lubetzky and Y.~Peres.
\newblock Cutoff on all ramanujan graphs.
\newblock {\em Geometric and Functional Analysis}, 26(4):1190--1216, 2016.

\bibitem{lubetzky2010cutoff}
E.~Lubetzky and A.~Sly.
\newblock {Cutoff phenomena for random walks on random regular graphs}.
\newblock {\em Duke Mathematical Journal}, 153(3):475--510, 2010.

\bibitem{pitman1997two}
J.~Pitman and M.~Yor.
\newblock The two-parameter poisson-dirichlet distribution derived from a
  stable subordinator.
\newblock {\em The Annals of Probability}, pages 855--900, 1997.

\bibitem{RSA20693}
A.~Smith.
\newblock The cutoff phenomenon for random birth and death chains.
\newblock {\em Random Structures and Algorithms}, to appear.

\bibitem{MR1465637}
D.~B. Wilson.
\newblock Random random walks on {${\bf Z}^d_2$}.
\newblock {\em Probab. Theory Related Fields}, 108(4):441--457, 1997.

\end{thebibliography}

\end{document}